\newtheorem{lemma}{Lemma}[section]
\newtheorem{theorem}[lemma]{Theorem}
\newtheorem{proposition}[lemma]{Proposition}
\newtheorem{corollary}[lemma]{Corollary}
\newcommand{\Tt}{\mbox{$\mathcal T$}}
\newcommand{\R}{{\mathbb R}}
\newcommand{\Tc}{\mathcal{T}}
\newcommand{\mcup}{\biguplus}
\newcommand{\upi}{\underline{\Pi}}
\newcommand{\usigma}{\underline{\Sigma}}
\newcommand{\setpa}{\mathbb{P}}
\begin{document}

\title{Representing Partitions on Trees}

\author{K. T. Huber}  
\address{
School of Computing Sciences, University of East Anglia,
Norwich, United Kingdom}
\email{katharina.huber@cmp.uea.ac.uk}
\thanks{KTH and VM would like to thank the Biomathematics 
Research Centre, Department of Mathematics and Statistics, 
University of Canterbury, Christchurch, New Zealand, and the
Department of Mathematics, National University of Singapore, Singapore 
for hosting them during part of the work. CS was supported by the New 
Zealand Marsden Fund and The Allan Wilson Centre for 
Molecular Ecology and Evolution. TW would like to acknowledge support from
Singapore MOE (grant$\#$: R-146-000-134-112)}

\author{V. Moulton}  
\address{
School of Computing Sciences, University of East Anglia,
Norwich, United Kingdom}
\email{vincent.moulton@cmp.uea.ac.uk}

\author{C. Semple}  
\address{
Biomathematics Research Centre, Department of 
Mathematics and Statistics, University of Canterbury, 
Christchurch, New Zealand}
\email{charles.semple@canterbury.ac.nz}

 \author{T. Wu}  
\address{School of Computing Sciences, University of East Anglia,
Norwich, United Kingdom
}
\email{taoyang.wu@gmail.com}

\date{\today}

\maketitle

\begin{abstract}
In evolutionary biology, biologists often face the
problem of constructing a phylogenetic tree on a set $X$ of species from 
a multiset $\Pi$ of partitions corresponding to various attributes of these species.  One approach that is used to 
solve this problem is to try instead to
associate a tree (or even a network) to the multiset $\Sigma_{\Pi}$
consisting of all those bipartitions $\{A,X-A\}$
with $A$ a part of some partition in $\Pi$.
The rational behind this approach is that 
a phylogenetic tree with leaf set $X$
can be uniquely represented by the set of 
bipartitions of $X$ induced by its edges.
Motivated by these considerations, given a multiset 
$\Sigma$ of bipartitions corresponding to 
a phylogenetic tree on $X$, in this paper
we introduce and study the set $\setpa(\Sigma)$ 
consisting of those multisets of partitions $\Pi$ 
of $X$ with $\Sigma_{\Pi}=\Sigma$.
More specifically, we characterize when
$\setpa(\Sigma)$ is non-empty, and also 
identify some partitions in $\setpa(\Sigma)$
that are of maximum and minimum size.
We also show that it is NP-complete
to decide when $\setpa(\Sigma)$ is non-empty 
in case $\Sigma$ is an arbitrary multiset of 
bipartitions of $X$. Ultimately, we hope that by gaining a 
better understanding of the mapping that 
takes an arbitrary partition system $\Pi$  
to the multiset $\Sigma_{\Pi}$,  
we will obtain new insights
into the use of median networks and, more generally, 
split-networks to visualize sets of partitions.\\

\noindent
{\bf Key words.} Phylogenetics, Partition systems, Compatibility, Split systems,  $X$-trees\\

\noindent
{\bf AMS subject classification} 05C05~92D15


\end{abstract}

\section{Introduction}

In evolutionary biology, biologists are often faced with the task of 
constructing a phylogenetic tree (i.e. an unrooted, 
edge-weighted tree without degree-two vertices and leaf set $X$) 
that represents a multiset $\Pi$ of partitions of a finite 
set $X$ of species or taxa.
Such multisets of partitions (or {\em partition systems}) usually arise 
from some collection of attributes or states of the 
species in question (e.g. ``wings'' versus ``no wings'' or
the four possible nucleotides in the columns of some 
molecular sequence alignment).
It is well-known that a phylogenetic tree
with leaf set $X$  is determined by the bipartitions or {\em splits} of $X$ that 
are induced by its edges \cite{B71}.
Hence, when trying to derive such trees from multi-state data, biologists sometimes
consider instead the multiset $\Sigma_{\Pi}$
of splits of $X$ consisting of all those $\{A,X-A\}$
with  $A \in \pi$ for some partition $\pi$ contained in a partition system $\Pi$ induced by the data~\cite{AB08,HMS04,DRS10}. 
The aim then becomes associating a tree (or possibly a 
network) to the multiset $\Sigma_{\Pi}$.

As an example of this process, for the
set $X = \{1,2,3,4,5,6\}$, consider the set of partitions 
$\Pi_1 = \{123|4|56,1|2|3456,3|12456,5|6|1234\}$ on $X$
(where, e.g., $123|4|56$ denotes the 
partition $\{\{1,2,3\},\{4\},\{5,6\}\}$).
Then the multiset $\Sigma_{\Pi_1}$ is 
represented (uniquely) by the phylogenetic tree
in Fig.~\ref{trees}.
Intriguingly,  $\Pi_1$ is not the 
only partition system that gives rise to the
tree depicted in Fig.~\ref{trees}.
For example, the set 
$\Pi_2 = \{123|4|5|6,1|2|3456,3|12456,56|1234\}$
gives rise to precisely the same tree
(or, in other words, $\Sigma_{\Pi_1} = \Sigma_{\Pi_2}$).
Thus, given a multiset $\Sigma$ of splits of $X$
that is {\em compatible} (i.e. corresponds to a phylogenetic tree),
it is of interest to better understand the set
$\setpa(\Sigma)$ that consists of all those
partition systems $\Pi$ on $X$ such that 
$\Sigma_{\Pi}=\Sigma$ holds.
As we shall see, the set $\setpa(\Sigma)$
can be quite complicated
in general. For example, even for the simple tree
in Fig.~\ref{trees} it can be shown that   
$\setpa(\Sigma_{\Pi_1})$ consists of 
$\Pi_1$, $\Pi_2$ as well as the sets
$\Pi_3 = \{12|3|4|56,1|2|3|456,5|6|1234\}$,
$\Pi_4 = \{12|3|4|5|6,1|2|3|456,56|1234\}$,
$\Pi_5 = \{1|2|3|4|56,12|3|456,5|6|1234\}$, and
$\Pi_6 = \{1|2|3|4|5|6,12|3|456,56|1234\}$.

\begin{figure}[h]
\center
\includegraphics[scale=0.35]{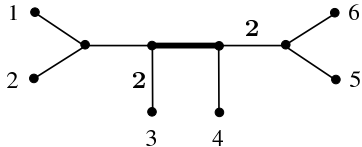}          
\caption{A tree that represents the multiset 
 $\Sigma_{\Pi_1}$ of splits on the set $X =\{1,2,3,4,5,6\}$
given in the text.
The removal of any edge of the tree gives
a split of $X$, with multiplicity 
given by the weight in bold assigned to the edge 
(all unlabelled edges have weight 1). For example, 
the bold edge gives rise to the split $123|456$.}
\label{trees}
\end{figure}

Although these considerations all appear rather 
abstract, our study of the set $\setpa(\Sigma)$ was
motivated by its appearance in the 
construction of {\em median networks}.
These networks generalize phylogenetic trees
and are commonly used to visualize complex 
evolutionary relationships arising from mitochodrial 
sequences \cite{BFR99,BYB09,Mor10}. Median networks 
can be directly constructed from splits \cite{DHHM97}.
Moreover, given a multiple sequence alignment
of a set $X$ of sequences, one way that is used to derive 
splits before constructing a median network is to
convert each non-constant column into a partition of $X$
so as to give a multiset $\Pi$ of partitions of $X$, and
then construct the multiset $\Sigma_{\Pi}$ (see, e.g.~\cite{AB08,DRS10}).
Thus, we expect that by gaining a better understanding
of the set $\setpa(\Sigma)$ (also for general split 
systems $\Sigma$) we will be able to obtain new insights
into the use of median networks (and more generally
{\em split-networks}; cf. \cite{HS10}) to represent partitions.
In addition, through considerations such as those presented 
in \cite{MR1907821}, we hope that
our results will help to further clarify the relationships 
between median and {\em quasi-median networks} given in \cite{HMS04}.

We now present an overview of our main results.
In the following two sections we present some
notation and terminology 
as well as some preliminary results that will be used 
throughout the paper. Then, in Section~\ref{eventrees}, 
we characterize those compatible multisets of
splits $\Sigma$ for which $\setpa(\Sigma)$ is non-empty
(Theorem~\ref{main1}). In addition, for $\Sigma$
a compatible multiset of splits of $X$, we show 
that if $\setpa(\Sigma)$ is non-empty then there is
always a unique partition system $\Pi$ in $\setpa(\Sigma)$
which is {\em strongly compatible}, i.e. 
for all $\pi_1,\pi_2 \in \Pi$ either $\pi_1=\pi_2$ or there 
is some $A \in \pi_1, B\in \pi_2$ such that $A \cup B =X$ \cite{DMS97}.
For example, for the multiset  $\Sigma$ of splits
giving rise to the tree depicted in Fig.~\ref{trees},
the set $\Pi_1$ is the unique strongly compatible
partition system in $\setpa(\Sigma)$.

As the example above illustrates, 
the size of the elements in $\setpa(\Sigma)$
can vary (e.g. the size of $\Pi_1$ is 4
whereas $\Pi_6$ has size 3). We are therefore interested
in understanding the maximum- and minimum-sized 
elements in this set.
In Section~\ref{maxsized}, we show that 
the unique, strongly compatible partition system 
in $\setpa(\Sigma)$ is always of maximum size.
In the subsequent section, we then focus
on minimum-sized elements of $\setpa(\Sigma)$, 
giving a method to construct 
such a partition system.
In general, it appears to be a difficult problem
to characterize the maximum-sized and minimum-sized
partition systems in $\setpa(\Sigma)$ for a
compatible multiset $\Sigma$ of splits. However, in 
Section~\ref{minsized} we characterize the 
minimum-sized elements for a special type
of multiset of splits that corresponds to 
a rooted tree in which the root has the 
same distance in the tree to all of the leaves.

In Section~\ref{hard}, we investigate a related algorithmic question: 
Given an arbitrary split system $\Sigma$ on $X$, 
can we decide in polynomial time in the size 
of $X$ if there exists a partition system
$\Pi$ of $X$ such that $\Sigma_{\Pi}=\Sigma$? 
By reduction from the Cubic Edge Colouring problem, 
we show that this problem is NP-complete, even if $\Sigma$ is an arbitrary {\em set}, that is, the 
multiplicity of each split in $\Sigma$ is equal to one (Theorem~\ref{NP-complete}). This
indicates that it might be difficult in general
to extend our main results to arbitrary multisets 
of splits. In the final section, 
we discuss how the mapping from partition
systems to split systems  given by taking a partition system $\Pi$ to 
the split system $\Sigma_{\Pi}$ could be studied in a more general 
setting, and mention some open problems that this leads to. 

Before proceeding we note that 
the problem of representing partitions (or characters)
by trees has also been studied in the context of the
{\em perfect phylogeny problem}.
This problem is concerned with 
representing partitions {\em convexly}
on a phylogenetic tree, and a great
deal of related theory has been developed 
(cf. e.\,g.\,\cite[Chapter 4]{SS03} and 
e.\,g.\,\cite{GH06,GLG11,SG10} for more recent results).  
However, this approach differs from ours since, for example,
there exist sets $\Pi$ of partitions all of 
whose elements are convex on some phylogenetic tree for which 
$\Sigma_{\Pi}$ is not compatible.

\section{Preliminaries}
\label{notation}

\noindent {\bf Multisets.} If $S$ is a finite non-empty set, a 
{\em multiset} chosen from $S$ 
is a function $m$ from $S$ into the set of non-negative integers 
${\mathbb Z}^{\ge 0}$. The set $S$ is sometimes called its {\em
underlying set}. For an element $t$ in $S$, the value 
$m(t)$ is the {\em multiplicity} of $t$. For example, 
let $S=\{1,2,3,4\}$. Then the multiset $\{1, 1, 2, 2, 2, 3\}$ denotes 
the function $m$ from $S$ into ${\mathbb Z}^{\ge 0}$ with $m(1)=2$, 
$m(2)=3$, $m(3)=1$, and $m(4)=0$. The multiplicity of $2$ is $3$, while 
the multiplicity of $4$ is $0$. The size $|M|$ of a multiset $M$ 
with underlying set $S$ is the sum of the multiplicities over all elements in
$S$. Let $m_1$ and $m_2$ be two functions from 
$S$ into ${\mathbb Z}^{\ge 0}$, and let $S_1$ and $S_2$ denote the multisets 
corresponding to $m_1$ and $m_2$, respectively. We denote the {\em multiset 
union} of $S_1$ and $S_2$ by $S_1\mcup S_2$, where $S_1\mcup S_2$ is the 
function from $S$ into ${\mathbb Z}^{\ge 0}$ defined by $m_1(t)+m_2(t)$ 
for all $t\in S$. Moreover, we denote the {\em multiset difference}
of $S_1$ and $S_2$ by $S_1- S_2$ where $S_1- S_2$ is the 
function from $S$ into ${\mathbb Z}^{\ge 0}$ defined by 
$\max\{0, m_1(t)-m_2(t)\}$ for all $t\in S$.  

\noindent {\bf Weak $X$-trees.} Throughout the paper, $X$ will always denote a  finite set of size at least two. 
A {\em weak $X$-tree} $\mathcal T$ is an 
ordered pair 
$(T; \phi)$, where $T$ is a tree with vertex set $V$ and $\phi:X\rightarrow V$ 
is a map with the property that, for each vertex $v\in V$ of degree one, 
$v\in \phi(X)$. For convenience, we 
refer to the vertices and edges 
of $T$ as the vertices and edges of $\mathcal T$, respectively, and write 
$V({\mathcal T})$ for $V(T)$ and $E({\mathcal T})$ for $E(T)$. A vertex $v$ 
of $\mathcal T$ is {\em labelled} if $v\in \phi(X)$; otherwise, $v$ is 
{\em unlabelled}. Given $u,v\in V$, we denote the length of the path joining 
$u$ and $v$ by $d_T(u,v)$. Sometimes we will
also use $d_{\mathcal T}(u,v)$ rather than $d_T(u,v)$.
A weak $X$-tree $\mathcal T$ is an {\em $X$-tree} if it additionally has the 
property that each degree-two vertex is labelled. Note that a 
phylogenetic $X$-tree $\mathcal T$ is an $X$-tree in which $\phi$ is 
a bijective map from $X$ to the leaf set of $\mathcal T$. We say that two
weak $X$-trees $\mathcal T=(T; \phi)$
and $\mathcal T'=(T'; \phi')$ are {\em isomorphic}, denoted by
$\mathcal T\cong \mathcal T'$, if
there exists a bijective map $\psi:V(T)\to V(T')$ 
that induces a graph isomorphism between $T$ and $T'$
for which $\phi'(x)=\psi(\phi(x))$ holds for all $x\in X$.

Note that weak $X$-trees are closely related to 
weighted $X$-trees, where an $X$-tree is {\em weighted} 
if each edge is assigned a positive integer weight. 
For example, the phylogenetic tree depicted in Fig.~\ref{trees} 
is equivalent to a weak $X$-tree in which each edge with 
weight 2 is subdivided into two edges by inserting an 
extra vertex. Indeed, we can translate between 
weighted $X$-trees and weak $X$-trees in general 
by inserting or suppressing unlabelled degree 2 vertices 
in a similar manner. However, in this paper we will
use weak $X$-trees rather than weighted $X$-trees
since they are more convenient for many of
our proofs (e.g. their vertices and edges can 
be used to represent certain partition systems).

\noindent{\bf Compatible split systems and hierarchies.} 
As mentioned in the introduction, a split of $X$ or, 
equivalently, an {\em $X$-split} is a bipartition of $X$ into two non-empty 
sets, that is, a partition $\pi=\{A_1, A_2, \ldots, A_t\}$ 
of $X$ with $t\geq 2$ in which each subset $A_i$, $i\in \{1,\ldots, t\}$,  
is non-empty and $t=2$ (rather than
$t\geq2 $ as is the case for a general partition of $X$). We will refer
to the subsets $A_i$ as {\em parts} of $\pi$ and, to simplify notation, 
we write $\{A_1, A_2, \ldots, A_t\}$ as $A_1|A_2|\cdots|A_t$, where 
the ordering of the parts of $\pi$ is irrelevant. 
A multiset of 
$X$-splits is called a {\em split system on $X$}. Split systems on $X$ 
naturally arise in the context of weak $X$-trees. In particular, let 
${\mathcal T}=(T; \phi)$ be a weak $X$-tree and let $e$ be an edge of 
$\mathcal T$. We denote by $\sigma_e$ the $X$-split $A|(X-A)$, where $A$ 
is one of the two 
maximal subsets of $X$ such that $e$ is not traversed on the path 
from $\phi(x)$ to $\phi(y)$ for all 
$x,y\in A$. This $X$-split {\em corresponds} to, or 
equivalently is {\em displayed} by, $e$ in $\mathcal T$. Note that, as 
$\mathcal T$ is a weak $X$-tree, it is possible that, for distinct edges 
$e$ and $f$, we have $\sigma_e=\sigma_f$. We denote the split system on $X$ 
corresponding to the edges of $\mathcal T$ by $\Sigma({\mathcal T})$, that is,
$$\Sigma({\mathcal T})=\mcup_{e\in E({\mathcal T})} \{\sigma_e\}.$$


A pair of $X$-splits $A_1|B_1$ and $A_2|B_2$ is {\em compatible} if at 
least one of the sets $A_1\cap A_2$, $A_1\cap B_2$, $B_1\cap A_2$, and 
$B_1\cap B_2$ is the empty set. A split system $\Sigma$ on $X$ is 
{\em compatible} 
if the splits in $\Sigma$ are pairwise compatible. The following theorem is 
a straightforward generalization of the Splits-Equivalence Theorem~\cite{B71} 
(also see~\cite[Theorem 3.1.4]{SS03}).

\begin{theorem}\label{bun-thm}
Let $\Sigma$ be a split system on $X$. Then there is a weak $X$-tree 
$\mathcal T$ such that $\Sigma=\Sigma({\mathcal T})$ if and only if 
$\Sigma$ is compatible. Moreover, if such a weak $X$-tree exists, then, 
up to isomorphism, $\mathcal T$ is unique.
\label{buneman}
\end{theorem}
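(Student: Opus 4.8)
The plan is to prove the two directions of the equivalence separately, with the uniqueness clause handled at the end. First consider the forward direction. Suppose $\Sigma=\Sigma(\mathcal T)$ for some weak $X$-tree $\mathcal T=(T;\phi)$. Given two edges $e,f$ of $\mathcal T$, I would argue that $\sigma_e$ and $\sigma_f$ are compatible by looking at the unique path $P$ joining the midpoints of $e$ and $f$ in $T$ (equivalently, at how $e$ and $f$ sit relative to each other). Removing $e$ partitions the vertex set of $T$ into two connected pieces, giving the two sides $A_e$ and $B_e$ of $\sigma_e$ (after pulling back along $\phi$); likewise for $f$. One of the four connected components obtained by deleting both $e$ and $f$ — namely the one ``between'' $e$ and $f$ on the path $P$ — lies on one side of $e$ and one side of $f$, and the remaining three components realize the other three intersection patterns. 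The key point is that the component strictly between $e$ and $f$ is the only one that can be empty of $X$-labels in a way forced by the weak-tree condition; more simply, among the four pieces $A_e\cap A_f$, $A_e\cap B_f$, $B_e\cap A_f$, $B_e\cap B_f$ (as subsets of $X$), at least one is empty because the ``inside'' region between $e$ and $f$ is one of these four regions as a set of vertices, and — choosing the labelling of the sides of $e$ and of $f$ appropriately — a different one of the four regions corresponds to a part of $T$ that contains no vertex adjacent to both pieces; I would simply observe directly that if $e$ lies on the $A_f$-side of $f$, then deleting $f$ and then $e$ shows $B_e\cap A_f$ or $A_e\cap A_f$ is contained in a single component not containing the $B_f$ side, forcing one of the four intersections empty. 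This is the standard argument and I would present it cleanly via the geometry of $T$.

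For the converse, suppose $\Sigma$ is compatible; I want to build a weak $X$-tree $\mathcal T$ with $\Sigma(\mathcal T)=\Sigma$. I would first pass to the underlying \emph{set} $\Sigma^{\ast}$ of splits appearing in $\Sigma$ (ignoring multiplicities): $\Sigma^{\ast}$ is a compatible split system in the ordinary (set) sense, so by the classical Splits-Equivalence Theorem (\cite{B71}, \cite[Theorem 3.1.4]{SS03}) there is a unique $X$-tree $\mathcal T^{\ast}=(T^{\ast};\phi^{\ast})$ with $\Sigma(\mathcal T^{\ast})=\Sigma^{\ast}$ and with each edge of $T^{\ast}$ displaying a distinct split. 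Now I would recover multiplicities by subdividing edges: for each split $\sigma\in\Sigma^{\ast}$ with multiplicity $m(\sigma)\ge 1$ in $\Sigma$, subdivide the corresponding edge $e_\sigma$ of $T^{\ast}$ into $m(\sigma)$ edges in series by inserting $m(\sigma)-1$ new \emph{unlabelled} vertices. Call the resulting tree $T$ and keep $\phi=\phi^{\ast}$ (as a map into $V(T)\supseteq V(T^{\ast})$). Each newly inserted vertex has degree two and is unlabelled, and since every degree-one vertex of $T$ is already a degree-one vertex of $T^{\ast}$, hence labelled, the pair $\mathcal T=(T;\phi)$ is a weak $X$-tree. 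Subdivision does not change the split displayed by any edge, and the $m(\sigma)$ edges into which $e_\sigma$ was split all display $\sigma$, so $\Sigma(\mathcal T)=\biguplus_{\sigma\in\Sigma^{\ast}}m(\sigma)\{\sigma\}=\Sigma$, as required.

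For uniqueness up to isomorphism, suppose $\mathcal T_1=(T_1;\phi_1)$ and $\mathcal T_2=(T_2;\phi_2)$ are weak $X$-trees with $\Sigma(\mathcal T_1)=\Sigma(\mathcal T_2)=\Sigma$. Suppressing every unlabelled degree-two vertex of $T_i$ (iteratively) produces $X$-trees $\mathcal T_i'$ with $\Sigma(\mathcal T_i')$ equal to the underlying set $\Sigma^{\ast}$, and by the uniqueness in the classical theorem $\mathcal T_1'\cong\mathcal T_2'$. It then remains to check that $\mathcal T_i$ is recovered from $\mathcal T_i'$ by subdividing each edge $e$ a number of times equal to (multiplicity of $\sigma_e$ in $\Sigma$) minus one — this is forced because in a weak $X$-tree the edges displaying a fixed split $\sigma$ must all lie on a single path of unlabelled degree-two vertices (any two edges displaying the same split bound a subtree all of whose interior vertices separate nothing new, hence are unlabelled degree-two), so the isomorphism $\mathcal T_1'\cong\mathcal T_2'$ extends to an isomorphism $\mathcal T_1\cong\mathcal T_2$.

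I expect the main obstacle to be the bookkeeping in the converse and uniqueness: namely, verifying rigorously that in a weak $X$-tree the set of edges displaying a given split $\sigma$ forms a contiguous path whose internal vertices are unlabelled and of degree two. This is what makes the ``subdivide the classical tree'' picture faithful, and it is where the weak-$X$-tree hypothesis (every degree-one vertex is labelled) does the real work; everything else is a routine transfer from the Splits-Equivalence Theorem.
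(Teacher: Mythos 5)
Your proposal is correct and follows exactly the route the paper intends: the paper omits the proof, describing the result as a ``straightforward generalization'' of the classical Splits-Equivalence Theorem, and your reduction --- pass to the underlying set of splits, invoke the classical theorem, recover multiplicities by subdividing edges with unlabelled degree-two vertices, and prove uniqueness by suppressing such vertices and noting that edges displaying a common split form a contiguous path --- is precisely that generalization. One small slip to repair in the forward direction: deleting two edges of a tree yields \emph{three} components, not four, which is exactly why one of the four intersections $A_e\cap A_f$, $A_e\cap B_f$, $B_e\cap A_f$, $B_e\cap B_f$ must be empty.
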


In light of this last result, if $\Sigma$ is a compatible split system 
on $X$, we denote the unique weak $X$-tree $\mathcal T$ for which 
$\Sigma({\mathcal T})=\Sigma$ holds by ${\mathcal T}_{\Sigma}$.
Note that in case $\Sigma=\Sigma(\Tt)$ for a weak $X$-tree $\Tt$, we
will write $\setpa(\Tt)$ rather than $\setpa(\Sigma(\Tt))$.

An analogue of Theorem~\ref{bun-thm} holds for 
trees having a root.
To make this statement more precise, we introduce some 
further terminology. A {\em rooted weak X-tree} 
$\mathcal T_{\rho}$ is an ordered pair $(T_{\rho}; \phi)$, where $T_{\rho}$ 
is a rooted tree with root $\rho$ which has degree at least two and vertex set 
$V$, and $\phi: X\to V-\{\rho\}$ is a map with the 
property that, for each vertex 
$v\in V$ of degree one, $v\in \phi(X)$. Note that if we view 
$\mathcal T_{\rho}$ as an unrooted tree with $\rho$ as ordinary 
interior vertex, we obtain a weak $X$-tree. We denote this weak 
$X$-tree by $\mathcal T^-_{\rho}$.

A {\em cluster} of $X$ is a non-empty subset of $X$ and it is {\em proper} 
if it is distinct from $X$. Let $\mathcal T_{\rho}$ be a rooted weak 
$X$-tree and let $e$ be an edge of $\mathcal T_{\rho}$. The proper subset 
of $X$ consisting of those elements that label a vertex in 
$\mathcal T_{\rho}$ whose path to the root traverses $e$ is denoted 
by $C_e$. This cluster $C_e$ {\em corresponds} to, or equivalently is 
{\em displayed} by, $e$ in $\mathcal T_{\rho}$. We denote the multiset 
of clusters of $X$ corresponding to the edges of $\mathcal T_{\rho}$ 
by $\mathcal H(\mathcal T_{\rho})$, that is,
$$\mathcal H(\mathcal T_{\rho})=\mcup_{e\in E(\mathcal T_{\rho})} \{C_e\}.$$
It is straightforward to show that this multiset 
of subsets of $X$
is a {\em hierarchy}, that is, for all 
$A, B\in \mathcal H(\mathcal T_{\rho})$, we 
have  $A\cap B\in \{\emptyset, A, B\}$. 
The next result is the aforementioned analogue of 
Theorem~\ref{bun-thm}. We omit the routine proof.

\begin{theorem}
Let $\mathcal H$ be a multiset of proper clusters of $X$ 
whose union is $X$. Then there is a rooted weak $X$-tree 
$\mathcal T_{\rho}$ such that $\mathcal H=\mathcal H(\mathcal T_{\rho})$ 
if and only if $\mathcal H$ is a hierarchy on $X$. Moreover, if such a 
rooted weak $X$-tree exists, then, up to isomorphism, 
$\mathcal T_{\rho}$ is unique.
\label{hbuneman}
\end{theorem}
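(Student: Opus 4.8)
The plan is to reduce Theorem~\ref{hbuneman} to Theorem~\ref{bun-thm} by the standard device of adjoining to $X$ one new point that will play the role of the root; once that is set up, the statement becomes pure bookkeeping. The forward implication is essentially already recorded: it was noted above that $\mathcal H(\mathcal T_\rho)$ is always a hierarchy, and (consistent with the standing hypothesis) its union is $X$, since every $x\in X$ labels a non-root vertex and hence belongs to $C_e$ for the first edge $e$ on the path from $\phi(x)$ to $\rho$. So I would concentrate on the converse together with uniqueness.

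Fix a symbol $\infty\notin X$ and put $X'=X\cup\{\infty\}$. To each cluster $A$ occurring in $\mathcal H$ associate the $X'$-split $\sigma_A:=A\mid((X-A)\cup\{\infty\})$; this is a genuine $X'$-split precisely because $A$ is non-empty and proper. Let $\Sigma$ be the split system on $X'$ obtained by giving $\sigma_A$ the multiplicity of $A$ in $\mathcal H$. A direct inspection of the four pairwise intersections shows that $\sigma_A$ and $\sigma_B$ are compatible exactly when $A\cap B\in\{\emptyset,A,B\}$ -- the two parts containing $\infty$ always meet, so only the remaining three intersections are in play -- and hence $\Sigma$ is compatible if and only if $\mathcal H$ is a hierarchy. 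By Theorem~\ref{bun-thm} there is then a weak $X'$-tree $\mathcal T'=(T';\phi')$, unique up to isomorphism, with $\Sigma(\mathcal T')=\Sigma$, and I would root $\mathcal T'$ at $\rho:=\phi'(\infty)$.

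The only point needing genuine care -- and the place where the two halves of the hypothesis actually enter -- is verifying that this rooting yields a legitimate rooted weak $X$-tree. First, no $x\in X$ is mapped by $\phi'$ to $\rho$: otherwise no split in $\Sigma$ separates $x$ from $\infty$, that is, $x\notin A$ for every $A\in\mathcal H$, contradicting $\bigcup\mathcal H=X$; hence $\phi:=\phi'|_X$ maps into $V(T')-\{\rho\}$. Second, $\rho$ has degree at least two: were it a leaf, its pendant edge would display $\{\infty\}\mid X$ (by the previous point), forcing $X=A$ for some $A\in\mathcal H$, contrary to properness of clusters. Every degree-one vertex of the rooted tree is labelled because it was labelled in $\mathcal T'$ and is distinct from $\rho$. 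Finally, for each edge $e$ the cluster $C_e$ in the rooted tree is exactly the side of $\sigma_e$ avoiding $\infty$, and $A\mapsto\sigma_A$ is injective on proper clusters, so $\mathcal H(\mathcal T_\rho)=\mathcal H$ as multisets.

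For uniqueness, suppose $\mathcal T_\rho$ and $\mathcal T'_\rho$ are rooted weak $X$-trees with $\mathcal H(\mathcal T_\rho)=\mathcal H=\mathcal H(\mathcal T'_\rho)$. I would turn each into a weak $X'$-tree by keeping the underlying tree of $\mathcal T^-_{\rho}$ and additionally assigning the label $\infty$ to the root; the split displayed by an edge $e$ is then $\sigma_{C_e}$, so both of these weak $X'$-trees have split system $\Sigma$. By the uniqueness clause of Theorem~\ref{bun-thm} there is an isomorphism between them; it preserves all $X'$-labels, so it carries the vertex labelled $\infty$ -- the root on one side -- to the root on the other, and it preserves all $X$-labels, hence it is an isomorphism of rooted weak $X$-trees. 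The main obstacle here is really only the degree-and-label check on $\rho$ above; everything else is formal, which is why the proof can safely be left to the reader.
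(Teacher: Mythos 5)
Your proof is correct; the paper itself omits the argument (``We omit the routine proof''), and your reduction to Theorem~\ref{bun-thm} via the adjoined element $\infty$ labelling the root is exactly the standard argument the authors have in mind when they call Theorem~\ref{hbuneman} the ``analogue'' of the Splits--Equivalence Theorem. You correctly isolate the only non-formal points, namely that $\bigcup\mathcal H=X$ forces $\phi'^{-1}(\rho)=\{\infty\}$ and that properness of the clusters forces $\deg(\rho)\ge 2$, so nothing further is needed.
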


\noindent {\bf Partition systems.} 
A partition system $\Pi$ of $X$ is {\em compatible} if 
$\Sigma_{\Pi}$ is compatible. Again following Theorem~\ref{buneman}, 
if $\Pi$ is a compatible partition system on $X$, we denote the weak $X$-tree 
$\mathcal T$ for which 
$\Sigma(\Tt)=\Sigma_{\Pi}$ holds by ${\mathcal T}_{\Pi}$. 
Similarly, a partition system  $\Pi$ on $X$ is {\em hierarchical} if
the set $\bigcup_{\pi\in \Pi} \pi$ of all 
subsets of $X$ that appear as a part in some partition in $\Pi$
is a hierarchy. Observe that, if $\Pi$ 
is hierarchical, then every subset of $\Pi$ is a hierarchical partition 
system on $X$. Furthermore, if $\pi_1,\pi_2\in \Pi$ and $\Pi$ is 
hierarchical, then, for each $A\in \pi_1$, either $A$ is a subset 
of a part in $\pi_2$ or $A$ is the disjoint union of parts in $\pi_2$.

Now, given a partition $\pi$ of $X$, we let 
$\Sigma_{\pi}=\mcup_{A\in \pi} \{A|(X-A)\}$, i.e. 
the multiset of bipartitions $A|(X-A)$ with $A \in \pi$.
The proof of the next result 
follows immediately from the respective definitions. 

\begin{lemma}\label{lem:union}
Let $\pi$ be a partition of $X$, $\Sigma$ be a split system on $X$, 
and $\Pi\in \setpa(\Sigma)$. Then the
following hold.
\begin{enumerate}
\item[(i)] $\Sigma_{\{\pi\}\uplus\Pi}=\Sigma\uplus\Sigma_{\pi}$.
\item[(ii)] If $\pi\in\Pi$, then $\Sigma_{\Pi-\{\pi\}}=\Sigma-\Sigma_{\pi}$.
\end{enumerate}
\end{lemma}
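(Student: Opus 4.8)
The plan is to unwind both assertions directly from the definitions of multiset union, multiset difference, the operator $\Pi \mapsto \Sigma_\Pi$, and the single-partition operator $\pi \mapsto \Sigma_\pi$. Recall that for a partition system $\Pi$ we have $\Sigma_\Pi = \mcup_{\pi' \in \Pi} \Sigma_{\pi'}$, where $\Sigma_{\pi'} = \mcup_{A \in \pi'} \{A|(X-A)\}$, and that multiset union is commutative and associative since it is defined pointwise by adding multiplicity functions (as set out in the Multisets paragraph). The hypothesis $\Pi \in \setpa(\Sigma)$ simply says $\Sigma_\Pi = \Sigma$.

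For part (i), I would write $\{\pi\} \uplus \Pi$ as the partition system obtained from $\Pi$ by incrementing the multiplicity of $\pi$ by one, and then compute
\[
\Sigma_{\{\pi\}\uplus\Pi} \;=\; \mcup_{\pi' \in \{\pi\}\uplus\Pi} \Sigma_{\pi'} \;=\; \Sigma_\pi \uplus \Big(\mcup_{\pi' \in \Pi} \Sigma_{\pi'}\Big) \;=\; \Sigma_\pi \uplus \Sigma_\Pi \;=\; \Sigma_\pi \uplus \Sigma,
\]
where the middle equality is just associativity/commutativity of $\mcup$ applied at the level of multiplicity functions, and the last step uses $\Sigma_\Pi = \Sigma$. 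Rearranging gives the claimed identity $\Sigma_{\{\pi\}\uplus\Pi} = \Sigma \uplus \Sigma_\pi$.

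For part (ii), assume $\pi \in \Pi$, so the multiplicity of $\pi$ in $\Pi$ is at least one and $\{\pi\} \uplus (\Pi - \{\pi\}) = \Pi$. Applying part (i) with $\Pi - \{\pi\}$ in place of $\Pi$ and $\Sigma_{\Pi-\{\pi\}}$ in place of $\Sigma$ yields $\Sigma_\Pi = \Sigma_{\Pi - \{\pi\}} \uplus \Sigma_\pi$. To solve for $\Sigma_{\Pi-\{\pi\}}$ I would then take the multiset difference on both sides and use that, when $M_2$ is contained in $M_1$ (i.e. $m_2(t) \le m_1(t)$ pointwise), one has $(M_1 \uplus M_2) - M_2 = M_1$; here $\Sigma_\pi$ is contained in $\Sigma_\Pi$ precisely because $\pi \in \Pi$. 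This gives $\Sigma_{\Pi-\{\pi\}} = (\Sigma_{\Pi-\{\pi\}} \uplus \Sigma_\pi) - \Sigma_\pi = \Sigma_\Pi - \Sigma_\pi = \Sigma - \Sigma_\pi$, as required.

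The only point requiring the slightest care is the cancellation step in part (ii): multiset difference is defined by $\max\{0, m_1(t) - m_2(t)\}$, so cancellation $(M_1 \uplus M_2) - M_2 = M_1$ holds unconditionally, but the identification of $\Sigma_{\Pi-\{\pi\}} - \Sigma_\pi$ (rather than $\Sigma_\Pi - \Sigma_\pi$) with the left-hand side does rely on $\pi\in\Pi$ so that removing one copy of $\pi$ is the genuine inverse of adding it back. Everything else is a bookkeeping verification at the level of multiplicity functions, so I would keep the written proof to a couple of lines, as the authors do.
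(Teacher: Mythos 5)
Your proposal is correct and matches the paper's approach: the authors simply state that the lemma ``follows immediately from the respective definitions,'' and your argument is exactly that verification, unwinding $\Sigma_\Pi$ as a pointwise sum of multiplicity functions and using associativity of $\uplus$ for (i) and the (unconditional) cancellation $(M_1\uplus M_2)-M_2=M_1$ together with $\Pi=\{\pi\}\uplus(\Pi-\{\pi\})$ for (ii). Nothing further is needed.
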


\section{Displaying Partition Systems}
\label{display}

In this section, we describe how weak $X$-trees 
can be used to represent partition systems. 
Let ${\mathcal T}=(T; \phi)$ be a weak $X$-tree and let $\pi$ be 
a partition of $X$. A subset $E_{\pi}\subseteq E(\mathcal T)$ 
of edges of $\mathcal T$ {\em displays} $\pi$ if there is a 
bijection $\xi_{\pi}:\pi\to E_{\pi}$ such that, for each $A\in \pi$, 
the $X$-split corresponding to the edge $\xi_{\pi}(A)$ is $A|(X-A)$. 
For convenience, 
if there exists such a subset $E_{\pi}$ of edges of $\mathcal T$, then we 
say that $\mathcal T$ {\em displays} $\pi$. Note that such a subset $E_{\pi}$ 
need not be unique.

For a compatible partition system $\Pi$ on $X$, the 
following two lemmas
that we use later on describe 
how ${\mathcal T}_{\Pi}$ displays the partitions in $\Pi$. 
Suppose $\Tt=(T; \phi)$ is a weak $X$-tree and let $e$ denote an edge
of $T$. Then we denote by $\Tt\backslash e$ the set of components of
$\Tt$ obtained by deleting $e$ from $T$. More generally, for $E$ a 
non-empty subset of edges of $T$ we denote by $\Tt\backslash E$ the 
set of components of $\Tt$ obtained by deleting all edges in $E$ from
$T$.

\begin{lemma}
Let $\Pi$ be a compatible partition system on $X$ and let $u$ be a vertex of 
$\Tt_{\Pi}=(T; \phi)$ such that $\phi^{-1}(u)\not=\emptyset$.
Let $e$ be an edge of $\Tt_{\Pi}$ incident with $u$ and let 
$B\in \sigma_e$ such
that $\phi^{-1}(u)\subseteq B$. Then there exists some $\pi\in \Pi$
such that $B\in \pi$.
\label{labelvertex}
\end{lemma}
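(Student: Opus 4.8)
Recall that $\Sigma_\Pi = \bigcup_{\pi\in\Pi}\Sigma_\pi$ and that $\Tt_\Pi$ is the weak $X$-tree with $\Sigma(\Tt_\Pi)=\Sigma_\Pi$. The plan is to show that some part of some partition in $\Pi$ is exactly the set $B$. The key observation is that $e$ is an edge of $\Tt_\Pi$, so $\sigma_e\in\Sigma(\Tt_\Pi)=\Sigma_\Pi$, which means there is a partition $\pi\in\Pi$ and a part $A\in\pi$ with $A|(X-A)=\sigma_e$; thus $A$ is either $B$ or $X-B$. If $A=B$ we are done, so the work is in ruling out the case $A=X-B$ when at the same time no other part of any partition in $\Pi$ equals $B$.

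\textbf{Key steps.} First I would fix notation: write $\Tt_\Pi=(T;\phi)$, let $u$ be the given labelled vertex, let $e=\{u,w\}$ be the incident edge, and let $B$ be the part of $\sigma_e$ containing $\phi^{-1}(u)$, so that $u$ lies in the component of $T\backslash e$ whose label set is $B$. Since $\phi^{-1}(u)\ne\emptyset$, pick $x\in\phi^{-1}(u)$. Second, I would use the fact that $\sigma_e\in\Sigma_\Pi$ to obtain $\pi\in\Pi$ with $B\in\pi$ or $X-B\in\pi$; the goal is to convert the latter into the former. Third — and this is the crux — I would argue that the partition $\pi$ realizing the split $\sigma_e$ can in fact be chosen so that its part is $B$ rather than $X-B$. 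The point is that $x\in\phi^{-1}(u)$ lies in some part $A'$ of $\pi$; because $\pi$ is a partition, $X-A'$ is a \emph{union} of the remaining parts of $\pi$, and in particular each such remaining part $A''$ gives a split $A''|(X-A'')\in\Sigma_\Pi=\Sigma(\Tt_\Pi)$, hence corresponds to an edge of $\Tt_\Pi$ lying on the $w$-side of $e$ (since $x\notin A''$ forces $A''\subseteq X-\{\text{label set of }u\text{'s side}\}$... ) — one shows the parts of $\pi$ other than the one containing $x$ are all contained in $X-B$, and so their union $X-A'$ is contained in $X-B$, giving $B\subseteq A'$. Combined with $A'\in\Sigma_\Pi$ meaning $A'$ or $X-A'$ equals some $\sigma_f$, and a size/containment comparison with $B$, this pins down $A'=B$.

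Actually the cleanest route avoids case analysis: observe that every part $A$ of every $\pi\in\Pi$ must be a union of blocks of $\Tt_\Pi\backslash E'$ for a suitable edge set, but more simply — since $B\in\sigma_e=\sigma_e$ and $\sigma_e\in\Sigma_\Pi$, there is $\pi\in\Pi$ with $B\in\pi$ or $X-B\in\pi$. In the second case, consider the unique part $A$ of $\pi$ containing $x\in\phi^{-1}(u)$. Then $A\ne X-B$ (as $x\in B$), so $A$ is one of the remaining parts, and $A|(X-A)\in\Sigma_\Pi$, i.e. $A$ or $X-A$ is $\sigma_f$ for an edge $f$ of $\Tt_\Pi$. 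Since $x\in A\cap B$ and $A\subseteq B$ would contradict $X-B\in\pi$ being a part disjoint from $A$ only if... one then checks $A\subsetneq B$ is impossible because no edge of $\Tt_\Pi$ on the $u$-side of $e$ can separate $x$ from the rest of $B$ in a way consistent with $X-B$ also being displayed — i.e. $X-B\in\pi$ together with $A\subsetneq B$ forces a third part between $A$ and $B$, and iterating, one arrives at a part equal to $B$ or a contradiction with $\Pi$ being finite.

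\textbf{Main obstacle.} The technical heart is the case $X-B\in\pi$: I must show this actually \emph{cannot} happen unless some part equal to $B$ also appears, or rather I should look for the partition differently. The real argument is: since $X-B\in\sigma_e$ too, and we want $B$ itself as a part, I would instead directly invoke that $\sigma_e\in\Sigma_\Pi$ yields $\pi\in\Pi$ with $\{B,X-B\}\cap\pi\ne\emptyset$, and then handle $X-B\in\pi$ by noting $u$ being labelled means $x=\phi^{-1}(u)$-element sits in a part $A\subseteq B$ of $\pi$ (parts can't cross the split $B|(X-B)$ since $X-B$ is itself a part and parts are disjoint); then $A\subseteq B$, $A\ne X-B$, and $A|(X-A)$ being a split of $\Tt_\Pi$ means $A=B$ or $A$ is displayed by an edge strictly inside the $B$-component — but then $X-A\supsetneq X-B$ is displayed by that same edge on the other side, contradicting that $X-B$ is displayed by $e$ only (edges display unique splits within a tree, and $X-A\ne X-B$ as sets would need two different edges giving nested splits, which is fine) — so here I genuinely need the compatibility/tree structure: the edge $f$ displaying $A|(X-A)$ lies on the path from $u$ into the $B$-side, and $A\subsetneq B$ means there is a vertex of $\Tt_\Pi$ between $u$ and that part; but $X-B\in\pi$ and $A\in\pi$ disjoint with $A\cup(X-B)\subsetneq X$ means $\pi$ has yet another part, contradicting that $\pi$'s parts partition $X$ unless that third part together with $A$ exactly fills $B$. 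The finiteness of $X$ and an extremal choice of $A$ (take $A$ a \emph{maximal} part of $\pi$ contained in $B$) should close this: maximality plus $X-B\in\pi$ forces $A=B$. I expect writing this extremal-part argument carefully is the one step requiring real care; everything else is unwinding definitions.
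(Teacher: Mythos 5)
Your overall strategy is the same as the paper's: obtain $\pi\in\Pi$ with $B\in\pi$ or $(X-B)\in\pi$, and in the second case show that the part $A'$ of $\pi$ containing $\phi^{-1}(u)$ --- which is contained in $B$, being disjoint from the part $X-B$ --- must in fact equal $B$. The gap is that you never establish $A'=B$. The ``extremal'' argument you fall back on (take a maximal part of $\pi$ contained in $B$; maximality plus $(X-B)\in\pi$ forces it to equal $B$) is false at the purely set-theoretic level: a partition $\pi=\{X-B,A_1,A_2\}$ with $A_1\sqcup A_2=B$ has two maximal parts contained in $B$, neither equal to $B$. Ruling this configuration out requires the tree, not maximality. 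Likewise your earlier assertion that ``the parts of $\pi$ other than the one containing $x$ are all contained in $X-B$'' is essentially the statement to be proved, and the parenthetical meant to justify it trails off.

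The missing step, which is exactly what the paper supplies, is a location argument for the edge $f$ of $\Tt_{\Pi}$ displaying $A'|(X-A')$ (such an edge exists because $A'\in\pi$ and $\Sigma_{\pi}\subseteq\Sigma_{\Pi}=\Sigma(\Tt_{\Pi})$). Because $e$ is incident with the \emph{labelled} vertex $u$, the component $Z$ of $\Tt_{\Pi}\backslash e$ with label set $B$ meets the rest of the tree only at $u$. If $f$ lies in $Z$, then the component of $\Tt_{\Pi}\backslash f$ not containing $u$ is a subtree of $Z$, so its label set $X-A'$ is contained in $B$; hence $\emptyset\neq X-B\subseteq A'$, contradicting that $A'$ and $X-B$ are disjoint parts of $\pi$. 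If $f$ lies in the other component of $\Tt_{\Pi}\backslash e$, then the side of $f$ containing $u$ contains all of $Z$, so $B\subseteq A'$ and hence $A'=B$, as desired. You came within a hair of the first case: you observe that $f$ must lie ``strictly inside the $B$-component'' and that $X-A'\supseteq X-B$, but you then dismiss this as ``fine'' instead of combining it with the forced containment $X-A'\subseteq B$ to reach the contradiction. With that one containment added, your argument closes and coincides with the paper's proof.
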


\begin{proof}
Let $A|B$ be the $X$-split corresponding to $e$, where 
$\phi^{-1}(u)\subseteq B$. Since $\Sigma({\mathcal T}_{\Pi})=\Sigma_{\Pi}$, 
it follows that there is a partition $\pi$ in $\Pi$ such that either 
$A\in \pi$ or $B\in \pi$. Suppose that $A\in \pi$, but $B\not\in \pi$. 
Then $|\pi|\geq 3$ and there exists a part $D\in \pi$ such that 
$\phi^{-1}(u)\subseteq D$. Hence, $A\cap D=\emptyset$ and $A\cup D\not=X$.
Since $\pi$ is displayed by $\Tt_{\Pi}$ and $D\in \pi$ there exists some
edge $e'$ of $\Tt_{\Pi}$ such that $\sigma_{e'}=D|X-D$. But then
either $e'$ is an edge in the connected component $Z$
of $\Tt_{\Pi}\backslash e$ that contains $u$ or $e'$ is contained in 
the other component $Z'$ of $\Tt_{\Pi}\backslash e$. In the former
case it follows that $A\subsetneq D$ and so $A\cap D\not=\emptyset$
which is impossible. Thus, $e'$ is an edge in $Z'$. But then
the connected component of $\Tt_{\Pi}\backslash e'$ 
that contains $u$ must contain
$Z$. Since $\phi^{-1}(u)\subseteq B\cap D$ it follows that
$B\subseteq D$ and thus $X=A\cup B\subseteq A\cup D\not=X$ which is
also impossible.
Thus $B\in \pi$.
\end{proof}


\begin{lemma}
Let $\Pi$ be a compatible partition system on $X$ and let $\pi$ 
be an element of $\Pi$. Let $E_{\pi}$ be a subset of edges of 
$\Tt_{\Pi}=(T; \phi)$ that displays $\pi$. Then the following holds:
\begin{itemize}
\item[{\rm (i)}] Denoting by  $V_1, V_2, \ldots, V_k$, $k\geq 1$,  
the vertex sets of 
the components of $\Tt_{\Pi}\backslash E_{\pi}$, then  $k=|\pi|+1$ and 
$$\{\phi^{-1}(V_1), \phi^{-1}(V_2), \ldots, \phi^{-1}(V_k)\}
=\pi\cup \{\emptyset\}.$$

\item[{\rm (ii)}] For every pair of labelled vertices $u$ and $v$ 
of $\Tt_{\Pi}$, the path joining $u$ to $v$ contains exactly $0$ 
or $2$ edges of $E_{\pi}$.
\end{itemize}
\label{displaypi}
\end{lemma}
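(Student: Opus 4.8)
The plan is to prove (i) first and then deduce (ii) from it. For (i), the key observation is that the edges in $E_\pi$ are pairwise compatible splits (being a subset of $\Sigma_\Pi$, which is compatible), and for a set of $m$ pairwise compatible splits displayed by edges of an $X$-tree, deleting those $m$ edges always produces exactly $m+1$ components. So the first step is to set $m=|E_\pi|=|\pi|$ (the latter equality from the bijection $\xi_\pi$ in the definition of ``displays'') and conclude $k=m+1=|\pi|+1$. The second step is to identify the components: for each $A\in\pi$, let $e_A=\xi_\pi(A)$, so $\sigma_{e_A}=A|(X-A)$. I would argue that the map sending a component $Z$ of $\Tt_\Pi\backslash E_\pi$ to the set of $A\in\pi$ such that $Z$ lies ``on the $A$-side'' of $e_A$ behaves well; more concretely, I claim that for each $A\in\pi$ there is exactly one component $Z_A$ with $\phi^{-1}(V(Z_A))\subseteq A$, and exactly one remaining component whose label set is empty. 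The cleanest way: since the $A\in\pi$ partition $X$, and $\phi^{-1}$ of each component is a subset of $X$ that must be ``$\subseteq A$ or disjoint from $A$'' for each $A$ (because $Z$ lies entirely on one side of $e_A$), each nonempty $\phi^{-1}(V(Z))$ is contained in exactly one $A$; counting (there are $|\pi|$ components to cover $|\pi|+1-1$ slots after removing the empty one) forces the displayed equality.

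For part (ii), the plan is purely combinatorial given (i). Take labelled vertices $u,v$ with $\phi(x)=u$, $\phi(y)=v$ for some $x,y\in X$, lying in components $Z_i,Z_j$ of $\Tt_\Pi\backslash E_\pi$ respectively. If $i=j$ the path from $u$ to $v$ stays inside one component and crosses $0$ edges of $E_\pi$. If $i\neq j$, the path crosses some nonzero number of $E_\pi$-edges; I need to rule out crossing $1$ or $\geq 3$. The point is that each such crossing corresponds to passing from the $A$-side to the $(X-A)$-side of some $e_A$, and since $x$ and $y$ each lie in a unique part of $\pi$, the multiset of splits crossed, when restricted to ``which side contains $x$ versus which contains $y$,'' must be consistent. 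Concretely: if $u\in Z_A$ (so $x\in A$) and $v\in Z_B$ with $A\neq B$, then the path from $u$ to $v$ must cross $e_A$ (to leave the $A$-side, since $y\notin A$) and must cross $e_B$ (to enter the $B$-side, since $x\notin B$); that is already $2$ edges. To see it crosses \emph{no other} edge of $E_\pi$: for any other $C\in\pi$, both $x$ and $y$ lie on the $(X-C)$-side of $e_C$ (as $x\in A\subseteq X-C$, $y\in B\subseteq X-C$), so the path from $u$ to $v$ — which lies on a path in a tree — cannot cross $e_C$ an odd number of times, and since each edge is crossed at most once on a simple path in a tree, it crosses $e_C$ zero times. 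Hence exactly $2$ edges of $E_\pi$ are crossed.

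I expect the main obstacle to be the bookkeeping in part (i) that makes the correspondence between components and parts precise, in particular checking that distinct parts $A\neq B$ of $\pi$ genuinely give distinct components $Z_A\neq Z_B$ and that no two parts ``share'' a component. This should follow from the fact that $A\cap B=\emptyset$ for distinct parts together with connectedness, but one must be slightly careful with the degenerate cases $k=1$ (i.e. $|\pi|=0$, which cannot occur since a partition has at least two parts — so actually $k\geq 3$) and with vertices of $\Tt_\Pi$ that are unlabelled, to ensure the empty label set really does appear as exactly one component's $\phi^{-1}$. An alternative, perhaps smoother route for (i) is induction on $|E_\pi|$: delete one edge $e_A$ of $E_\pi$, apply Lemma~\ref{labelvertex}-style reasoning or just the splits-equivalence structure to the two resulting subtrees, and recurse; but the direct counting argument above is likely cleanest since compatibility of $E_\pi$ is already in hand.
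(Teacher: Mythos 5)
Your part~(ii) is correct, and your route there is genuinely different from (and arguably cleaner than) the paper's: you observe that for any $C\in\pi$ with $x,y$ on the same side of $e_C$ the tree path from $u$ to $v$ crosses $e_C$ an even number of times, hence zero times, while $e_A$ and $e_B$ are each crossed exactly once. The paper instead first caps the count at two by deriving a contradiction from three nested splits $A_1\subset A_2\subset A_3$, and then separately excludes the case of exactly one crossing; your parity argument handles both at once and does not really need part~(i) as input. (One small point to make explicit: for $|\pi|=2$ the edges $e_A$ and $e_B$ carry the \emph{same} split, so you should say why they are nevertheless distinct edges each crossed once --- this is immediate from the bijection $\xi_\pi$, but it is a case your notation quietly elides.)

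The genuine gap is in part~(i), at precisely the step you defer to ``bookkeeping.'' That deleting $|E_\pi|=|\pi|$ edges yields $|\pi|+1$ components is fine (compatibility is not needed for this), as is the observation that each component's nonempty label set lies in a unique part. But your counting --- ``$|\pi|$ components to cover $|\pi|+1-1$ slots after removing the empty one'' --- presupposes that exactly one component is unlabelled, which is part of the conclusion. The count alone does not exclude the configuration in which all $|\pi|+1$ components are labelled and some part $A$ is spread over two components $Z\neq Z'$. Ruling this out needs the tree structure: the $A$-side of $e_A$ is connected and has total label set $A$, so it would contain both $Z$ and $Z'$ and hence some edge $e_{A'}$ with $A'\neq A$; one side of $e_{A'}$ would then have its label set ($A'$ or $X-A'$) contained in $A$, contradicting $A\cap A'=\emptyset$ in the first case and forcing $A\cup A'=X$ in the second. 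The second alternative is only impossible when $|\pi|\ge 3$, which is exactly why the paper treats $|\pi|=2$ separately (there the two edges of $E_\pi$ display the identical split and one checks directly that the segment between them is unlabelled). Your sketched fix --- disjointness plus connectedness --- supplies only half of this; you also need $A\cup A'\neq X$, and you do not address the $|\pi|=2$ case for part~(i) at all. So as written the central claim of (i) is asserted rather than proved, though the repair indicated above (which is essentially the paper's argument) is short.
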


\begin{proof}
 We first assume that $|\pi|=2$, that is, $\pi=A|B$ for some split $A|B$ of $X$. 
Let $E_{\pi}$ be a subset of edges of 
$\Tt_{\Pi}$ that displays $\pi$. Then $E_\pi$ consists of 
two distinct edges $e_1=\{u'_1,u_1\}$ and $e_2=\{u'_2,u_2\}$ 
such that $\sigma_{e_1}=A|B=\sigma_{e_2}$. 
By swapping $u'_1$ and $u_1$, and $u'_2$ and $u_2$ if necessary, 
we may assume that $u'_1$ and $u'_2$ are not contained 
in the shortest path $P$ between $u_1$ and $u_2$.
Moreover, since $\sigma_{e_1}=\sigma_{e_2}$, each vertex of $P$, including $u_1$ and $u_2$, is 
unlabelled and has degree two. Hence the lemma holds for this case.

Next assume that $|\pi|\ge 3$.
Suppose $e\in E_{\pi}$ and $B\in \sigma_e$ with $B\in \pi$. Let $v_B$
denote the end-vertex of $e$ that is contained in the connected
component of $\Tt_{\Pi}\backslash e$ which contains some (and thus
all) $u\in V(T)$ such that $\phi^{-1}(u)\subseteq B$.
 Since $\pi$ is a partition of $X$, 
there is no edge in $E_{\pi}$ on the path from $v_B$ to a vertex $w$ such that 
$\phi^{-1}(w)\subseteq B$. As $\pi$ is a partition of $X$, 
part (i) of the lemma now follows.

For the proof of (ii), let $u$ and $v$ be distinct labelled vertices of 
$\Tt_{\Pi}$. Suppose that the path from $u$ to $v$ contains (in order) 
three edges $e_1$, $e_2$, and $e_3$ of $E_{\pi}$. Since $|\pi|\ge 3$, the 
splits $A_1|B_1$, $A_2|B_2$, and $A_3|B_3$ 
corresponding to $e_1$, $e_2$, and $e_3$, respectively, are distinct. 
Without loss of generality, we may assume that $A_1\subset A_2\subset A_3$. 
But then $B_1\cap A_2$ is non-empty and $B_2\cap A_3$ is non-empty. Since, 
for each $i\in \{1,2,3\}$, at least one of $A_i$ and $B_i$ must be 
contained in $\pi$, it follows that $\pi$ is not a partition of $X$; a
 contradiction. Thus the path from $u$ to $v$ contains at most two edges 
of $E_{\pi}$.

Now suppose that the path from $u$ to $v$ contains exactly one edge 
$e_1$ of $E_{\pi}$. Let $A_1|B_1$ be the split corresponding to $e_1$. 
Without loss of generality, we may assume that 
$\phi^{-1}(u)\subseteq A_1$ and $A_1\in \pi$. 
Then $\phi^{-1}(v)\cap A_1=\emptyset$.
Now $|E_{\pi}|\ge 3$ and no edge in $E_{\pi}-\{e_1\}$ is on the path 
from $u$ to $v$. It follows that, for all edges $e'\in E_{\pi}-\{e_1\}$, 
the component of 
$\Tt_{\Pi}\backslash e'$ that contains $v$ also contains $u$. In particular, 
there is a part in $\pi$ that contains $\phi^{-1}(u)\cup \phi^{-1}(v)$; a 
contradiction as $\phi^{-1}(u)\subseteq A_1$ and 
$\phi^{-1}(v)\cap A_1=\emptyset$. This completes the proof of (ii), and 
thus the proof of the lemma.
\end{proof}

For a tree $T$, the {\em diameter} of $T$, denoted $\Delta(T)$, is
$$\Delta(T)=\max\{d_T(u,v): \mbox{$u$ and $v$ are leaves of $T$}\}.$$
The following corollary is an immediate consequence of 
Lemma~\ref{displaypi}(ii), and gives a lower bound on
the size of a partition in $\setpa(\Sigma)$ for 
$\Sigma$ compatible in terms of the tree corresponding to $\Sigma$.

\begin{corollary}
Let $\Sigma$ be a compatible split system on $X$ and 
let $\Pi\in\setpa(\Sigma)$. Then
\begin{align*}
\Delta({\mathcal T}_{\Sigma})\le 2|\Pi|.
\end{align*}
\label{lem:diam:bound}
\end{corollary}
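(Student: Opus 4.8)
The plan is to deduce the bound directly from Lemma~\ref{displaypi}(ii). Fix a compatible split system $\Sigma$ on $X$ and some $\Pi\in\setpa(\Sigma)$. By definition of $\setpa(\Sigma)$ and of $\Tt_\Sigma$, we have $\Sigma=\Sigma_\Pi=\Sigma(\Tt_\Sigma)$, so $\Pi$ is a compatible partition system and $\Tt_\Pi=\Tt_\Sigma$. Since $\Sigma_\Pi$ is the multiset union $\mcup_{\pi\in\Pi}\Sigma_\pi$ and each split in $\Sigma(\Tt_\Sigma)$ arises from some edge, the multiset of all parts appearing across the partitions in $\Pi$ accounts, via the bipartitions they induce, for all of $\Sigma(\Tt_\Sigma)$; in particular, every edge of $\Tt_\Sigma$ lies in some displaying set $E_\pi$ for at least one $\pi\in\Pi$. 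The point I want is: every edge of $\Tt_\Sigma$ on a leaf-to-leaf path is ``covered'' by the union of the edge sets $\bigcup_{\pi\in\Pi}E_\pi$ (choosing one displaying set $E_\pi$ per $\pi$).

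Next I would pick two leaves $u,v$ of $\Tt_\Sigma$ realizing the diameter, so the path $P$ from $u$ to $v$ has exactly $\Delta(\Tt_\Sigma)$ edges. Leaves are labelled vertices, so $u,v\in\phi(X)$. For each $\pi\in\Pi$, fix a displaying set $E_\pi\subseteq E(\Tt_\Pi)$; by Lemma~\ref{displaypi}(ii), the path $P$ contains exactly $0$ or $2$ edges of $E_\pi$, hence at most $2$. Now every edge $e$ of $P$ displays the split $\sigma_e\in\Sigma(\Tt_\Sigma)=\Sigma_\Pi$, so there is some $\pi\in\Pi$ and some part $A\in\pi$ with $\sigma_e=A|(X-A)$; since $E_\pi$ is a displaying set for $\pi$, it contains an edge whose split is $\sigma_e$, and because distinct edges of the path $P$ carry distinct splits (they separate $u$ from $v$ in distinct ways) while $E_\pi$ may contain other edges with split $\sigma_e$ elsewhere in the tree, a small argument is needed to see that $e$ itself can be taken to lie in some $E_\pi$. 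Concretely, when $|\pi|\ge 3$ every edge with split $\sigma_e$ that lies on $P$ must be the one in $E_\pi$ (Lemma~\ref{displaypi}(ii) forces the two edges of $E_\pi$ met by $P$, and along $P$ the split $\sigma_e$ is realised by a unique edge); when $|\pi|=2$ one uses the structure from the first paragraph of the proof of Lemma~\ref{displaypi}. Either way, $e\in\bigcup_{\pi\in\Pi}E_\pi$.

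Combining, the $\Delta(\Tt_\Sigma)$ edges of $P$ are covered by $\bigcup_{\pi\in\Pi}E_\pi$, and each $E_\pi$ contributes at most $2$ of them, so $\Delta(\Tt_\Sigma)\le\sum_{\pi\in\Pi}|E_\pi\cap E(P)|\le 2|\Pi|$, which is the claim; the multiplicities in the multiset $\Pi$ only help, since each copy of a repeated partition gives its own (at most $2$) edges on $P$.

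The main obstacle is the bookkeeping in the second paragraph: making precise that each edge of the diameter path is genuinely charged to some $E_\pi$ containing it, rather than merely to a $\pi$ that has an edge with the same split somewhere else in $\Tt_\Sigma$. This is where Lemma~\ref{displaypi}(ii) does the real work — it pins down which edges of $E_\pi$ a leaf-to-leaf path can meet — so once that local uniqueness is argued the counting is immediate. An alternative, cleaner phrasing avoids edge-tracking entirely: apply Lemma~\ref{displaypi}(ii) to the diameter leaves $u,v$ for each $\pi$ to get that $P$ meets $E_\pi$ in $0$ or $2$ edges, and separately note that the $\Delta(\Tt_\Sigma)$ splits along $P$ are pairwise distinct and each belongs to $\Sigma_\pi$ for some $\pi$; since a single $\pi$ can supply at most $2$ of these distinct splits along $P$ (again by (ii), as more than $2$ would force $3$ edges of some $E_\pi$ on $P$), we get $\Delta(\Tt_\Sigma)\le 2|\Pi|$ directly.
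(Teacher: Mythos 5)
Your argument is correct and takes essentially the same route as the paper, which states the corollary without proof as an ``immediate consequence of Lemma~\ref{displaypi}(ii)'': pick two leaves realizing the diameter and observe that each $\pi\in\Pi$ accounts for at most two edges of the path between them. The edge-covering bookkeeping in your first two paragraphs is unnecessary --- your final ``cleaner phrasing'' (each partition separates the two endpoints by $0$ or $2$ of the pairwise distinct splits along the path, so $\Delta(\mathcal{T}_{\Sigma})=d(u,v)\le 2|\Pi|$) is exactly the intended one-line deduction and is the version to keep.
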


\section{A Characterization of Compatibility}
\label{eventrees}

In this section, for a given
split system $\Sigma$ on $X$, we characterize when there exists 
a partition system $\Pi$ on $X$ such that $\Sigma_{\Pi}=\Sigma$
(i.e. when $\setpa(\Sigma)$ is non-empty). 
We begin by presenting some definitions.

A {\em 2-colouring} of a graph $G$ is a bipartition of the vertex set of $G$ 
such that no two vertices in a part are joined by an edge.
An {\em even $X$-tree} $\Tt=(T; \phi)$ is a weak $X$-tree with the 
additional property that $d_{\mathcal T}(\phi(x), \phi(y))$ is even 
for all $x, y\in X$. Let $v$ be a vertex of an even $X$-tree $\Tt=(T, \phi)$. 
Then $v$ is {\em even} if there is a leaf $l$ in $\Tt$ such that 
$d_{\mathcal T}(v, l)$ is even; otherwise, $v$ is {\em odd}. Note that 
all leaves of $\Tt$ are even and that we treat the
number zero as an even number. We denote by  
$V_{\mbox{\scriptsize \rm even}}(\Tt)$ the subset of
even vertices of $\Tt$ and by $V_{\mbox{\scriptsize odd}}(\Tt)$
the subset of odd vertices of $\Tt$.

\begin{lemma}
Let $\Tt$ be an even $X$-tree. Then
\begin{itemize}
\item[{\rm (i)}] all labelled vertices of $\Tt$ are even, and

\item[{\rm (ii)}] the even and odd vertices of $\Tt$ induce a 
$2$-colouring of $\Tt$.
\end{itemize}
\label{littleeven}
\end{lemma}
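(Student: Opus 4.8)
The plan is to prove both parts by analysing parities of path lengths in the tree $\Tt=(T;\phi)$, using the defining property that $d_{\Tt}(\phi(x),\phi(y))$ is even for all $x,y\in X$.

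For part (i): let $v$ be a labelled vertex, so $v=\phi(x)$ for some $x\in X$. Since $X$ has size at least two and every degree-one vertex of $\Tt$ lies in $\phi(X)$, there is some leaf $l$ of $\Tt$ with $l=\phi(y)$ for some $y\in X$ (possibly $l=v$ itself, in which case $d_{\Tt}(v,l)=0$ is even and we are done). Then $d_{\Tt}(v,l)=d_{\Tt}(\phi(x),\phi(y))$ is even by the definition of an even $X$-tree, so $v$ is even.

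For part (ii): I need to show that no edge of $T$ joins two even vertices and no edge joins two odd vertices; equivalently, every edge joins an even vertex to an odd vertex. First I would establish the following key fact: for any two vertices $u,w$ of $\Tt$, the parity of $d_{\Tt}(u,w)$ is determined, and in fact $u$ and $w$ have the same even/odd status if and only if $d_{\Tt}(u,w)$ is even. To see this, fix a leaf $l$; writing $p$ for the vertex on the path from $u$ to $l$ nearest to $w$ (the ``median'' of $u$, $w$, $l$ in the tree), one has $d_{\Tt}(u,l)+d_{\Tt}(w,l)=d_{\Tt}(u,w)+2\,d_{\Tt}(p,l)$, so $d_{\Tt}(u,l)+d_{\Tt}(w,l)\equiv d_{\Tt}(u,w)\pmod 2$. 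Hence if both $u$ and $w$ are even, witnessed by leaves $l_u$ and $l_w$ respectively, then using that $d_{\Tt}(l_u,l_w)$ is even (both are labelled, hence in $\phi(X)$, or one can chase through the median identity twice) one deduces $d_{\Tt}(u,w)$ is even. A cleaner route: pick one fixed leaf $l_0$; then the median identity above with $w=l$ an arbitrary leaf shows $d_{\Tt}(u,l)\equiv d_{\Tt}(u,l_0)+d_{\Tt}(l_0,l)\pmod 2$, and since $d_{\Tt}(l_0,l)$ is even (both leaves are labelled), $d_{\Tt}(u,l)\equiv d_{\Tt}(u,l_0)\pmod 2$ for every leaf $l$. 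Consequently $u$ is even precisely when $d_{\Tt}(u,l_0)$ is even, i.e. the even/odd status of a vertex is exactly the parity of its distance to the fixed leaf $l_0$. Since adjacent vertices differ in distance to $l_0$ by exactly one, adjacent vertices have opposite parity, so the even vertices and odd vertices form the two parts of a $2$-colouring of $T$; both parts are non-empty since $l_0$ is even and $T$ has at least two vertices (indeed an edge), one endpoint of which is odd.

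The main obstacle is the very first reduction: pinning down that the even/odd status of an arbitrary (possibly unlabelled) vertex equals the parity of its distance to one fixed leaf. This requires the median identity $d_{\Tt}(u,l)+d_{\Tt}(l_0,l)=d_{\Tt}(u,l_0)+2\,d_{\Tt}(m,l)$, where $m$ is the median of $u$, $l_0$, $l$, together with the fact that any two leaves are labelled (immediate from the weak $X$-tree condition that degree-one vertices lie in $\phi(X)$) so the distance between them is even. Once that parity characterization is in hand, both (i) and (ii) are short; I would present the argument in the order: (a) any two leaves are labelled, hence at even distance; (b) fix a leaf $l_0$ and prove via the median identity that $v$ is even iff $d_{\Tt}(v,l_0)$ is even; (c) deduce (i) since a labelled vertex is at even distance from every leaf; (d) deduce (ii) since adjacency changes $d_{\Tt}(\cdot,l_0)$ by one.
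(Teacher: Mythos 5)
Your proof is correct and follows essentially the same route as the paper, which simply asserts that (i) is immediate and (ii) is ``easily checked''; the content you supply --- that all leaves are labelled and hence pairwise at even distance, so by the median identity the even/odd status of any vertex is exactly the parity of its distance to one fixed leaf, whence adjacent vertices get opposite colours --- is precisely the verification the paper leaves to the reader. No gaps.
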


\begin{proof}
Part (i) follows immediately from the definition of an even 
$X$-tree. For part (ii), it is easily checked that every edge is 
incident with exactly one even vertex and one odd vertex, and so 
the even and odd vertices induce a $2$-colouring of $\Tt$.
\end{proof}


Let ${\mathcal T}=(T; \phi)$ be a weak $X$-tree and let $v$ be an unlabelled 
vertex of $\mathcal T$. Then the partition of $X$ {\em displayed} 
by $v$ is precisely the partition $\pi$ in which two elements $x,y\in X$ 
are in the same part of $\pi$ if and only if the path 
from $\phi(x)$ to $\phi(y)$ 
does not pass through $v$. We denote this partition by $\pi(v)$. Note 
that the degree of $v$ equals $|\pi(v)|$. Moreover 
for a graph $G$ and a vertex $v\in V(G)$, 
we denote by $G\backslash v$ the graph obtained from $G$ by deleting $v$ 
and all its incident edges.

\begin{theorem}\label{main1}
Let $\Sigma$ be a compatible split system on $X$. Then the 
following statements are equivalent:
\begin{itemize}
\item[{\rm (i)}] $\Tt_{\Sigma}$ is even.

\item[{\rm (ii)}] There exists a partition system $\Pi$ on $X$ such 
that $\Sigma_{\Pi}=\Sigma$.

\item[{\rm (iii)}] There exists a strongly compatible partition 
system $\Pi_s$ on $X$ such that $\Sigma_{\Pi_s}=\Sigma$.
\end{itemize}
Furthermore, if (iii) holds, then
$$
\Pi_s=\{\pi(v): v\in V_{\mbox{\scriptsize \rm odd}}(\Tt_{\Sigma})\}
$$
is the unique strongly compatible partition system with 
$\Sigma_{\Pi_s}=\Sigma$.
\label{thm:tree:sc}
\end{theorem}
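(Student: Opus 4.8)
The plan is to prove the cycle of implications (i)$\Rightarrow$(iii)$\Rightarrow$(ii)$\Rightarrow$(i), and then handle uniqueness. The implication (iii)$\Rightarrow$(ii) is trivial since a strongly compatible partition system is in particular a partition system. For (ii)$\Rightarrow$(i), suppose $\Pi$ is a partition system with $\Sigma_\Pi=\Sigma$; I would argue that $\Tt_\Sigma=\Tt_\Pi$ must be even. Take any two leaves $\phi(x),\phi(y)$ and consider the path $P$ between them. Each edge $e$ of $P$ displays a split $\sigma_e$ which, because $\Sigma_\Pi=\Sigma$, arises as $A|(X-A)$ with $A$ a part of some $\pi\in\Pi$; pick a set $E_\pi$ of edges displaying that $\pi$. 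By Lemma~\ref{displaypi}(ii), $P$ meets $E_\pi$ in $0$ or $2$ edges. Summing over the partitions of $\Pi$ whose displaying-edge-sets cover the edges of $P$ (every edge of $P$ lies in at least one such $E_\pi$), and being careful that a single edge may be covered by several $E_\pi$'s, shows $d_{\Tt_\Sigma}(\phi(x),\phi(y))$ is a sum of even numbers, hence even. The bookkeeping here is the one slightly delicate point: I would set it up by fixing, for each $\pi\in\Pi$, one displaying set $E_\pi$, observing $\mcup_{\pi\in\Pi}E_\pi$ (as a multiset) equals $E(\Tt_\Sigma)$ with correct multiplicities since $\Sigma_\Pi=\Sigma$, and then counting incidences of $P$ with this multiset in two ways.

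The heart of the argument is (i)$\Rightarrow$(iii): assuming $\Tt_\Sigma=(T;\phi)$ is even, I claim $\Pi_s=\{\pi(v):v\in V_{\mathrm{odd}}(\Tt_\Sigma)\}$ satisfies $\Sigma_{\Pi_s}=\Sigma$ and is strongly compatible. For $\Sigma_{\Pi_s}=\Sigma$: every part $A$ of $\pi(v)$ for an odd vertex $v$ is one of the maximal subsets of $X$ cut off by deleting $v$, and since $v$ is odd while $\Tt_\Sigma$ is even, $v$ is \emph{not} labelled (Lemma~\ref{littleeven}(i)), so $\pi(v)$ is genuinely displayed by the edges incident with $v$, giving $\Sigma_{\pi(v)}=\mcup_{e\ni v}\{\sigma_e\}$. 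The key combinatorial fact to establish is that \emph{every} edge $e$ of $T$ is incident with exactly one odd vertex: this is precisely Lemma~\ref{littleeven}(ii) (the even/odd $2$-colouring), since each edge joins an even to an odd vertex. Hence $\mcup_{v\in V_{\mathrm{odd}}}\Sigma_{\pi(v)}=\mcup_{e\in E(T)}\{\sigma_e\}=\Sigma(\Tt_\Sigma)=\Sigma$, as each edge is counted exactly once. For strong compatibility: given odd vertices $v_1\ne v_2$, the path between them passes through some edge; looking at the two "sides", one checks there is a part $A\in\pi(v_1)$ (the side away from $v_2$, i.e.\ the component of $T\setminus v_1$ not containing $v_2$) and a part $B\in\pi(v_2)$ (symmetrically) with $A\cup B=X$ — indeed $A$ and $B$ together with possibly overlapping "middle" cover $X$, and more carefully $A=X\setminus(\text{component of }T\setminus v_1\text{ containing }v_2\text{ and its labels})$, and one shows $X-A\subseteq B$. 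I would write this out via the standard observation that for distinct vertices on a tree the "far components" are nested/complementary in the required way.

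For the uniqueness claim I would argue as follows. Suppose $\Pi$ is any strongly compatible partition system with $\Sigma_\Pi=\Sigma$. First, strong compatibility forces each $\pi\in\Pi$ to be displayed by the edges around a single vertex of $\Tt_\Sigma$: if $\pi$ were displayed by an edge set $E_\pi$ that is not a vertex-star, then $\Tt_\Sigma\backslash E_\pi$ would have two components whose label-sets $A,B$ are parts of $\pi$ with $A\cup B\ne X$, but this does not immediately contradict strong compatibility within a single partition — rather, I should use that $\pi$ is a partition (its parts are pairwise disjoint with union $X$) together with Lemma~\ref{displaypi}: part (i) says $\Tt_\Sigma\backslash E_\pi$ has $|\pi|+1$ components with label-sets $\pi\cup\{\emptyset\}$, so there is a unique component $V_0$ with $\phi^{-1}(V_0)=\emptyset$, and by part (ii) every other component is adjacent to $V_0$ across a single edge; hence $E_\pi$ is the star of edges leaving $V_0$, i.e.\ $\pi=\pi(v)$ for the cut vertex $v$ whenever $V_0$ is a single vertex — and one shows $V_0$ must be a single unlabelled vertex using that each internal structure of $V_0$ would create further splits. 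Then $v$ is unlabelled, and since $\Tt_\Sigma$ is even, every edge incident to $v$ goes to an even neighbour, forcing $v$ itself to be odd; thus $\pi=\pi(v)$ with $v\in V_{\mathrm{odd}}(\Tt_\Sigma)$, so $\Pi\subseteq\Pi_s$. Conversely counting: by the (i)$\Rightarrow$(iii) computation $\Sigma_{\Pi_s}=\Sigma=\Sigma_\Pi$, and since distinct odd vertices give distinct partitions while $\mcup_{\pi\in\Pi_s}\Sigma_\pi$ uses each edge exactly once, no proper sub-multiset of $\Pi_s$ can have split-union $\Sigma$; hence $\Pi=\Pi_s$. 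The main obstacle I anticipate is precisely nailing down that the displaying edge set of a partition in a \emph{strongly compatible} system must be a vertex-star (equivalently, that $V_0$ above is a single vertex); the cleanest route is to observe that if $V_0$ contained an edge $f$, then $\sigma_f\in\Sigma$ would be a split not accounted for, or alternatively to invoke that $\bigcup_{\pi\in\Pi}\pi$ is strongly compatible in a way that pins the partitions to vertices — I would look for the slickest phrasing but expect this to be where the real work lies.
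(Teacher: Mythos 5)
Your proof of the equivalence of (i)--(iii) is correct and follows essentially the same route as the paper: (iii)$\Rightarrow$(ii) is trivial, (ii)$\Rightarrow$(i) counts the splits separating $x$ and $y$ and uses Lemma~\ref{displaypi}(ii) to see each partition contributes $0$ or $2$, and (i)$\Rightarrow$(iii) takes $\Pi_s=\{\pi(v):v\in V_{\mbox{\scriptsize\rm odd}}(\Tt_{\Sigma})\}$, uses the $2$-colouring of Lemma~\ref{littleeven} to get $\Sigma_{\Pi_s}=\Sigma$, and the ``far components'' of two odd vertices to get strong compatibility.

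The uniqueness part, however, has a genuine gap exactly where you flag it, and your proposed fix does not work. Your plan is to show that every $\pi$ in a strongly compatible $\Pi$ with $\Sigma_{\Pi}=\Sigma$ equals $\pi(v)$ for an odd vertex $v$, by arguing that the unlabelled component $V_0$ of $\Tt_{\Sigma}\backslash E_{\pi}$ is a single vertex. Two problems. First, the step ``if $V_0$ contained an edge $f$, then $\sigma_f\in\Sigma$ would be a split not accounted for'' is simply false: $\sigma_f$ is displayed by some part of some \emph{other} partition of $\Pi$, so nothing is unaccounted for. Second, and more seriously, everything in your sketch up to that point (Lemma~\ref{displaypi}(i) and (ii), the quotient tree being a star centred at $V_0$) holds for an arbitrary compatible partition system, not just a strongly compatible one; yet the conclusion is false in general --- e.g.\ for the tree of Fig.~\ref{trees} the system $\Pi_6$ contains $1|2|3|4|5|6$, whose displaying set is the six pendant edges and whose $V_0$ is the entire interior of the tree. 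So strong compatibility must enter essentially at the step you have left open, and your sketch gives no mechanism for it to do so. (A further, smaller, issue: even when $\pi=\pi(v)$ for an odd $v$, an arbitrary displaying set $E_{\pi}$ need not be a vertex star --- on a path of length $4$ joining two labelled leaves, the split $a|b$ has several displaying pairs of edges, only two of which are stars --- so the claim must be phrased about the partition $\pi$ itself, not about ``the'' displaying set.) The paper avoids all of this by a different argument: it inducts on $|\Sigma|$, first proving that for a leaf $l$ with (odd, unlabelled) neighbour $u$ one must have $\pi(u)\in\Pi$. That proof starts from Lemma~\ref{labelvertex} to get $\pi_1\in\Pi$ with $\phi^{-1}(l)\in\pi_1$, shows $\pi_1$ refines $\pi(u)$, and then --- assuming $\pi(u)\notin\Pi$ --- produces a second partition $\pi_2$ sharing an edge at $u$ and derives a contradiction with strong compatibility of $\{\pi_1,\pi_2\}$ via a case analysis on whether $A_t$ or $X-A_t$ lies in $\pi_2$. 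One then deletes $\pi(u)$, contracts the star of $u$, and applies the induction hypothesis to $\Sigma-\Sigma_{\pi(u)}$. If you want to complete your version, you would need to supply an argument of comparable substance showing how strong compatibility collapses $V_0$ to a point; as it stands, that is the entire content of the uniqueness claim and it is missing.
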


\begin{proof}
Evidently, (iii) implies (ii). To see that (ii) implies (i), 
suppose that $\Pi$ is a partition system on $X$ such that 
$\Sigma_{\Pi}=\Sigma$. Let $\Tt_{\Sigma}=(T, \phi)$ and let $x, y\in X$. 
Then $d_{\mathcal T_{\Sigma}}(\phi(x), \phi(y))$ is equal to the number of 
splits $S$ in $\Sigma_{\Pi}$ for which $x$ and $y$ are in different parts 
of $S$. By Lemma~\ref{displaypi},
each partition in $\Pi$ contributes either $0$ or $2$ such splits. 
Thus $d_{\mathcal T_{\Sigma}}(\phi(x), \phi(y))$ is even
 and, hence, $\Tt_{\Sigma}$ is even.

We next show that (i) implies (iii). 
Suppose that $\Tt_{\Sigma}=(T, \phi)$ is even and put
$V_{\mbox{\scriptsize \rm odd}}=V_{\mbox{\scriptsize \rm odd}}(\Tt_{\Sigma})$. 
By Lemma~\ref{littleeven}(i), $V_{\mbox{\scriptsize \rm odd}}$ 
contains no labelled vertex of $\Tt_{\Sigma}$. Let
$$
\Pi_s=\{\pi(v): v\in V_{\mbox{\scriptsize \rm odd}}\}.
$$
By Lemma~\ref{littleeven}(ii), every edge of $\Tt_{\Sigma}$ is 
incident with exactly 
one vertex in $V_{\mbox{\scriptsize \rm odd}}$ and so it follows that 
$\Sigma_{\Pi_s}=\Sigma$. Furthermore, let 
$v_1$ and $v_2$ be distinct vertices in $V_{\mbox{\scriptsize \rm odd}}$. 
Let $V_2$ (resp.\ $V_1$) be the vertex set of the component of 
$\Tt_{\Sigma}\backslash v_1$ 
(resp.\ $\Tt_{\Sigma}\backslash v_2$) that contains $v_2$ 
(resp.\ $v_1$). Then $\phi^{-1}(V_2)\in \pi(v_1)$ and 
$\phi^{-1}(V_1)\in \pi(v_2)$, and $\phi^{-1}(V_2)\cup \phi^{-1}(V_1)=X$. 
Thus $\Pi_s$ is strongly compatible. This completes the proof that
(i) implies (iii) and thus the proof 
of the 
equivalence of (i)--(iii).

To establish the uniqueness part of the theorem, let $\Pi$ be a strongly 
compatible partition system on $X$ such that $\Sigma_{\Pi}=\Sigma$. 
Let $l$ be a leaf of $\Tt_{\Sigma}$ and let $u$ be the unique vertex 
of $\Tt_{\Sigma}$ 
adjacent to $l$. Since $\Tt_{\Sigma}$ is even, it follows by 
Lemma~\ref{littleeven}(ii) that $u$ is odd
and so $\phi^{-1}(u)=\emptyset$.
We next show that $\pi(u)\in \Pi$.

Suppose that $\pi(u)\not\in \Pi$. Let $\pi(u)=\{A_1, A_2, \ldots, A_t\}$, 
where $t\ge 2$ and, for all $i\in \{1,\ldots,t\}$, 
denote the edge $e$ of $\Tt_{\Sigma}$ incident 
with $u$ such that $\sigma_e=A_i| X-A_i$ holds by $e_i$.
Without loss of generality, we may assume that 
$A_1=\phi^{-1}(l)$. By Lemma~\ref{labelvertex}, 
there is a partition $\pi_1\in \Pi$ such that $A_1\in \pi_1$. 
Consider $\pi_1$. Since $A_1\in \pi_1$, it follows by 
Lemma~\ref{displaypi}(ii) that each path joining $l$ to another 
leaf of $\Tt_{\Sigma}$ contains exactly two edges of any subset 
$E_{\pi_1}$ of edges of $\Tt_{\Sigma}$ displaying $\pi_1$. As no 
other part of $\pi_1$ contains $A_1$, it follows that $\pi_1$ is a 
refinement\footnote{A partition $\pi'$ of $X$ is called 
a {\em refinement} of a partition
$\pi$ on $X$ if every part of $\pi'$ is a subset of a part of $\pi$.} 
of $\pi(u)$. Since $\pi(u)\not\in \Pi$ and thus
$\pi_1\not=\pi(u)$, this implies that, 
for some $i\in \{2, 3, \ldots, t\}$, the part $A_i$
is the disjoint union 
of at least two parts in $\pi_1$. Without loss of generality,
we may assume that $i=t$. Since $A_t$ 
is the disjoint union of at
least two parts in $\pi_1$, there is a partition, $\pi_2$ say, in $\Pi$ 
with $\pi_2$ distinct 
from $\pi_1$ such that a subset $E_{\pi_2}$ of edges of $\Tt$ that displays 
$\pi_2$ contains $e_t$. In particular, either 
$A_t\in \pi_2$ or $(X-A_t)\in \pi_2$.

We first show that $A_t\not\in \pi_2$. Assume that 
$A_t\in \pi_2$ holds. Then independent of the size of $\pi_2$ we must have
that the degree of $u$ cannot be two as otherwise
$\pi_2=\pi(u)$ would follow; a contradiction. We next distinguish
between $|\pi_2|\geq 3$ and $|\pi_2|=2$. If $|\pi_2|\geq 3$ then there
exists some $B\in \pi_2$ distinct from $A_t$ such that 
$\phi^{-1}(l)\subseteq B$. Let $e_B$ denote an edge
of $\Tt$ that displays the split $B|(X-B)$ which must exist as 
$\pi_2\in \Pi$. Note that $B\not=A_1$ as otherwise, since  
$A_1\in\pi_1$, $B\in \pi_2$ and $\pi_1\not=\pi_2$,
the
multiplicity of the split $B|X-B$ in $\Sigma_{\Pi}$ is at least two. 
But then the degree of $u$ is two which is impossible. 
Consequently, $e_B\not=e_1$. Moreover since $|\pi_2|\geq 3$ it follows that
$B=A_t$ or $B=X-A_t$ cannot hold either and so
$e_B\not=e_t$.  Thus the path from
$l$ to any vertex $a$ of $\Tt_{\Sigma}$ with $\phi^{-1}(a)\subseteq A_t$ 
holding does not cross the edge $e_B$. Combined with the
fact that $A_1=\phi^{-1}(l)\subseteq B$ it follows that 
$A_1\cup A_t\subseteq B$  which is impossible as $A_t$ and $B$ are
distinct parts of $\pi_2$. Thus, $|\pi_2|\geq 3$ cannot hold.
If $|\pi_2|=2$ then $\pi_2=\{A_t,B\}$ and so $\pi_1$ is a refinement of
$\pi_2$. But then $\pi_1$ and $\pi_2$ cannot be strongly compatible;
a contradiction. Thus,  $|\pi_2|=2$ cannot hold either. 
Consequently, $A_t\not\in \pi_2$, as required.

Now assume that $(X-A_t)\in \pi_2$. Since, as seen above, 
$A_t\not\in \pi_2$ it follows that
$A_t$ is the disjoint union of at least two parts in $\pi_2$. By
the choice of $A_t$ as the union of at least two parts in $\pi_1$ 
it follows that $\pi_1$ and $\pi_2$ are not strongly compatible; a 
contradiction. Hence $\pi(u)\in \Pi$ as required.

We complete the uniqueness part of the proof using induction on 
$k=|\Sigma|$. If $k=2$, then there is exactly one 
partition system $\Pi$ such that $\Sigma_{\Pi}=\Sigma$, and the 
uniqueness result follows. Now suppose that $k\ge 3$ and the uniqueness 
result holds for all compatible split systems $\Sigma'$ on $X$ for which 
$\Tt_{\Sigma'}$ is an even $X$-tree and $|\Sigma'|\le k-1$.

Let $\Pi$ be a strongly compatible partition system on $X$ such that 
$\Sigma_{\Pi}=\Sigma$. Let $l$ be a leaf of $\Tt_{\Sigma}$ and let 
$u$ be the vertex of $\Tt_{\Sigma}$ adjacent to $l$. By above, 
$\pi(u)\in \Pi$. Let $\Sigma'=\Sigma-\Sigma_{\pi(u)}$. Then $\Sigma'$ 
is compatible, $|\Sigma'|\le k-1$, and $\Tt_{\Sigma'}$ is an even 
$X$-tree as it corresponds to the weak $X$-tree obtained
from $\Tt_{\Sigma}$ by contracting all edges incident with
$u$ and labelling the resulting vertex with the union of the label sets 
of the vertices previously adjacent to $u$. 
Therefore, by the induction assumption,
$$
\Pi'_s=\{\pi(v): v\in V_{\mbox{\scriptsize odd}}(\Tt_{\Sigma'})\},
$$
is the unique strongly compatible partition system 
on $X$ for which $\Sigma_{\Pi'_s}=\Sigma'$. Therefore, as
$$
V_{\mbox{\scriptsize \rm odd}}-V_{\mbox{\scriptsize odd}}(\Tt_{\Sigma'})
=\{u\},
$$
it follows that $\Pi=\Pi'_s\mcup \{\pi(u)\}=\Pi_s$. Thus the uniqueness 
property holds for $\Sigma$. This completes the proof of the theorem.
\end{proof}

Let $\Tt$ be a weak $X$-tree and let $e$ be an edge of $\Tt$. 
We denote by $\Tt/e$ the weak $X$-tree obtained from $\Tt$ by 
contracting $e$ and labelling the new identified vertex with the 
union of the labels of the end vertices of $e$. If $F$ is a subset 
of the edges of $\Tt$, then $\Tt/F$ denotes the weak $X$-tree obtained 
from $\Tt$ by contracting each of the edges in $F$ in this way where of
course the order of contraction is of no relevance. 

The next result sheds light into the structure of
weak $X$-trees obtained from even $X$-trees by contracting
edges. Its proof follows from 
Lemma~\ref{displaypi}(ii) and is omitted.

\begin{lemma}
Let $\Tt$ be an even $X$-tree and let $\pi$ be a partition of $X$ 
displayed by $\Tt$. Let $F$ be a subset of edges of $\Tt$ that 
displays $\pi$. Then $\Tt/F$ is an even $X$-tree. 
\label{tinyeven}
\end{lemma}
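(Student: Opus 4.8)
The plan is to characterize which edges of $\Tt/F$ survive as edges, describe the split they display in terms of the edges of $\Tt$, and then check the even-distance condition directly between labelled vertices. First I would recall from Lemma~\ref{displaypi}(i) that $F$, being a set of edges displaying $\pi$, has size $|\pi|$ and that $\Tt\backslash F$ has exactly $|\pi|+1$ components, one of which (the one whose vertex set maps to $\emptyset$ under $\phi^{-1}$) is unlabelled and the other $|\pi|$ of which are the parts of $\pi$. Contracting $F$ amounts to identifying, for each edge $f\in F$, its two end-vertices; since the components of $\Tt\backslash F$ are exactly the "blocks" that get merged, the edge set of $\Tt/F$ is precisely $E(\Tt)-F$, and each such edge $e\in E(\Tt)-F$ still displays a split $A_e|(X-A_e)$ of $X$, where $A_e$ is the union of those parts of $\pi\cup\{\emptyset\}$ lying on one side of $e$.

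Next, to show $\Tt/F$ is even, I would take two labelled vertices $u,v$ of $\Tt/F$ and bound the number of edges of $E(\Tt)-F$ on the path $P'$ between them. A labelled vertex of $\Tt/F$ either is a labelled vertex of $\Tt$ or arises from contracting a component of $\Tt\backslash F$ that contains at least one labelled vertex of $\Tt$; in the latter case pick any labelled representative in $\Tt$. The path $P'$ in $\Tt/F$ from $u$ to $v$ corresponds to the path $P$ in $\Tt$ between the chosen labelled representatives, and the edges of $P'$ are exactly the edges of $P$ that do \emph{not} lie in $F$ (the edges of $P$ that lie in $F$ are the ones that got contracted away). Hence
$$d_{\Tt/F}(u,v) = d_{\Tt}(u',v') - |E(P)\cap F|,$$
where $u',v'$ are the labelled representatives in $\Tt$ and $E(P)$ is the edge set of $P$. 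Now $d_{\Tt}(u',v')$ is even because $\Tt$ is an even $X$-tree, and by Lemma~\ref{displaypi}(ii) applied to $\pi$ and $F$ (a subset of edges displaying $\pi$), the path $P$ between the two labelled vertices $u'$ and $v'$ contains either $0$ or $2$ edges of $F$ --- in either case an even number. Therefore $d_{\Tt/F}(u,v)$ is the difference of two even numbers, hence even, and $\Tt/F$ is an even $X$-tree.

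The one genuinely fiddly point --- and the part I expect to need the most care --- is the bookkeeping identifying the edges and paths of $\Tt/F$ with those of $\Tt$, in particular checking that when $u$ (or $v$) comes from contracting a whole component of $\Tt\backslash F$, the choice of labelled representative $u'$ inside that component does not affect the parity, and that $E(P')=E(P)\setminus F$ holds with $P$ the $\Tt$-path between the chosen representatives. The first is immediate since any two vertices of the \emph{same} component of $\Tt\backslash F$ are joined by a path avoiding $F$, so changing representative changes $d_\Tt$ by an even amount (no $F$-edges) and leaves $|E(P)\cap F|$ unchanged; the second is just the standard description of contraction. Once this is set up cleanly the even-distance computation above finishes the proof, so this is really the content-free but notation-heavy step, which is presumably why the authors chose to omit it.
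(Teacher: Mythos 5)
Your argument is correct and is precisely the one the paper has in mind: the paper omits the proof of this lemma, stating only that it follows from Lemma~\ref{displaypi}(ii), which is exactly the parity computation you carry out ($d_{\Tt/F}$ between labelled vertices equals an even number minus $0$ or $2$). One small slip in your bookkeeping: the vertex classes merged by contracting $F$ are the components of the spanning subgraph with edge set $F$ (so two vertices in the same class are joined by a path consisting entirely of $F$-edges), not the components of $\Tt\backslash F$; this does not affect the argument, since the identity $d_{\Tt/F}(u,v)=d_{\Tt}(u',v')-|E(P)\cap F|$ holds for any choice of representatives and you may simply take $u'=\phi(x)$ and $v'=\phi(y)$.
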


The following corollary may be viewed as the 
converse of Lemma~\ref{displaypi}(ii). 

\begin{corollary}
Let $\Tt$ be an even $X$-tree, and let $F$ be a non-empty
subset of edges of $\Tt$ 
with the property that, for every pair of labelled vertices $u$ and $v$, 
the path joining $u$ and $v$ contains exactly~$0$ or~$2$ edges of $F$. 
Then there is a partition system $\Pi\in{\mathbb P}(\Tt)$ and 
a partition $\pi\in \Pi$ such that $F$ displays $\pi$.
\label{two}
\end{corollary}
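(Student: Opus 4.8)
The plan is to produce the required partition system $\Pi$ and partition $\pi$ explicitly, using the structure of $F$ together with the even/odd $2$-colouring of $\Tt$. First I would invoke Lemma~\ref{tinyeven}: the hypothesis on $F$ (every path between labelled vertices meets $F$ in $0$ or $2$ edges) is exactly the condition needed to conclude that $F$ displays some partition $\pi$ of $X$, namely the partition whose parts are $\phi^{-1}(V_1),\ldots,\phi^{-1}(V_k)$ for the components $V_1,\ldots,V_k$ of $\Tt\backslash F$ — note that none of these can be empty, since an empty part would correspond to a component containing no labelled vertex, forcing some labelled-to-labelled path to cross an odd number of edges of $F$ (a parity argument on the colouring). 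So $\pi$ is well defined and $F$ displays it; moreover by Lemma~\ref{tinyeven} the contracted tree $\Tt/F$ is again an even $X$-tree.

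Next I would build the rest of $\Pi$ from $\Tt/F$. Since $\Tt/F$ is an even $X$-tree, Theorem~\ref{main1} (the implication (i)$\Rightarrow$(iii)) gives a strongly compatible partition system $\Pi' = \{\pi(w) : w \in V_{\mathrm{odd}}(\Tt/F)\}$ with $\Sigma_{\Pi'} = \Sigma(\Tt/F)$. The key point is that $\Sigma(\Tt) = \Sigma(\Tt/F)\mcup\Sigma_\pi$: the edges of $\Tt/F$ are exactly the edges of $\Tt$ not in $F$ (with the same displayed splits), and the edges of $F$ contribute precisely the multiset $\Sigma_\pi$ because $F$ displays $\pi$. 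Then by Lemma~\ref{lem:union}(i), setting $\Pi = \{\pi\}\mcup\Pi'$ gives $\Sigma_\Pi = \Sigma(\Tt/F)\mcup\Sigma_\pi = \Sigma(\Tt) = \Sigma$, so $\Pi \in \setpa(\Tt) = \mathbb{P}(\Tt)$, and by construction $\pi \in \Pi$ is displayed by $F$. This completes the argument.

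The step I expect to require the most care is verifying cleanly that $F$ displays a genuine \emph{partition} — i.e. that every component of $\Tt\backslash F$ contains at least one labelled vertex, and hence $k = |\pi|$ with all parts nonempty. This is where the evenness of $\Tt$ and the $0$-or-$2$ condition are both essential: one has to rule out a component of $\Tt\backslash F$ attached to the rest of the tree by a single edge of $F$ and containing no leaf, which would contradict the weak $X$-tree condition that degree-one vertices are labelled, and more delicately rule out the "$0$" case degenerating. I would handle this by the same parity bookkeeping used in the proof of Lemma~\ref{littleeven}(ii): assign to each vertex its $F$-parity (parity of the number of $F$-edges on a path to a fixed leaf), check this is well defined precisely because of the $0$-or-$2$ hypothesis, and observe that a component with no labelled vertex would be an "$F$-isolated" blob whose boundary edges violate the count. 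Everything else is routine once Lemma~\ref{tinyeven} and Theorem~\ref{main1} are in hand, so the write-up should be short.
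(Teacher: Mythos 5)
Your overall architecture is the same as the paper's: build a partition $\pi$ out of the components of $\Tt\backslash F$, then invoke Lemma~\ref{tinyeven} to see that $\Tt/F$ is even, Theorem~\ref{main1} to get some $\Pi'\in\setpa(\Tt/F)$, and Lemma~\ref{lem:union}(i) to conclude that $\{\pi\}\mcup\Pi'\in\setpa(\Tt)$. That second half is correct and matches the paper. The problem is the key step, namely exhibiting the partition that $F$ displays, and the error there is not cosmetic. Deleting the $t=|F|\ge 2$ edges of $F$ from a tree produces $t+1$ components, and your claim that none of them can be unlabelled is false: exactly one of them is \emph{always} unlabelled. (Take a star with unlabelled centre and $F$ the set of all $t$ pendant edges: the central component is a single unlabelled vertex, yet every path between labelled vertices crosses exactly two edges of $F$.) Consequently the parity argument you sketch --- that an unlabelled component would force some labelled-to-labelled path to cross an odd number of edges of $F$ --- would prove a false statement. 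Moreover, keeping all $t+1$ components as parts yields a collection of size $t+1$ containing $\emptyset$, which is not a partition; and since ``displays'' requires a bijection $\xi_\pi:\pi\to F$, a candidate with $t+1$ nonempty parts could not be displayed by $F$ in any case.

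The paper's construction sidesteps this as follows. For each $f_i\in F$, applying the $0$-or-$2$ hypothesis to a pair of leaves chosen beyond a putative $F$-edge on each side of $f_i$ shows that one of the two components of $\Tt\backslash f_i$ contains no edge of $F$; its vertex set is called $V_i$. These $V_i$ are $t$ pairwise distinct components of $\Tt\backslash F$, the split displayed by $f_i$ is $\phi^{-1}(V_i)\,|\,(X-\phi^{-1}(V_i))$ by construction, and the single remaining component is shown to be unlabelled because a labelled vertex $w$ in it would admit a path to a leaf in some $V_j$ containing exactly one edge of $F$, contradicting the hypothesis. So the correct statement is that the unlabelled component must be discarded, not that it cannot occur. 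A further slip: you cite Lemma~\ref{tinyeven} as supplying the existence of $\pi$, but that lemma only asserts that \emph{if} $F$ displays a partition then $\Tt/F$ is even; the corollary is precisely the converse of Lemma~\ref{displaypi}(ii), so the existence of $\pi$ is exactly the content you must prove, and your sketch of it does not work as written.
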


\begin{proof}
Let $F=\{f_1, f_2, \ldots, f_t\}$, $t\geq 2$, 
and let $i\in \{1,2, \ldots, t\}$. Let $\Tt=(T, \phi)$ and 
consider $\Tt\backslash f_i$. Since there are exactly two edges 
in $F$ on the path between a leaf in one component 
of $\Tt\backslash f_i$ and a leaf in the other component, 
one of the components contains no edges in $F$. For each $i$, 
let $V_i$ denote the vertex set of the component of $\Tt\backslash f_i$ 
containing no edges in $F$.

We now show that 
$$
\pi=\{\phi^{-1}(V_1), \phi^{-1}(V_2), \ldots, \phi^{-1}(V_t)\}
$$ 
is a partition of $X$. If not, then there is a labelled vertex, $w$ say, 
such that the component of $\Tt\backslash F$ that contains $w$ in its
vertex set is not contained in 
$\{V_1, V_2, \ldots, V_t\}$. But then, in the path from $w$ to a leaf 
in any one of the components $V_1, V_2, \ldots, V_t$, there is exactly 
one edge in $F$; a contradiction. Thus $\pi$ is a partition of $X$. 

To see that there is a partition system in ${\mathbb P}(\Tt)$ 
containing $\pi$, observe that, by Lemma~\ref{tinyeven}, $\Tt/F$ is an 
even $X$-tree and so, 
by Theorem~\ref{thm:tree:sc}, there is a partition system $\Pi'$ 
on $X$ such that $\Sigma_{\Pi'}=\Sigma(\Tt/F)$, that is,
$\Pi'\in{\mathbb P}(\Tt/F)$. By Lemma~\ref{lem:union}(i),
it now follows that $\Pi'\mcup \{\pi\}$ 
is a partition system in ${\mathbb P}(\Tt)$.
\end{proof}

\section{Maximum-Sized Partition Systems}
\label{maxsized}

In this section, for a compatible split system $\Sigma$ on $X$, we show 
that the unique strongly compatible partition system in 
$\setpa(\Sigma)$ 
is a partition system in $\setpa(\Sigma)$ of maximum size. 

We begin by proving a lemma
for which we require some additional notation.
Let $T$ be a tree with at least two leaves. We denote
by $V_{\mbox{\scriptsize \rm int}}(T)$ the set of interior 
vertices of $T$. Suppose ``odd'' and ``even'' are the colours
of a $2$-colouring of $T$.
Extending our notation for even $X$-trees, we denote the sets of
vertices of $T$ coloured ``odd'' and ``even'' by 
$V_{\mbox{\scriptsize \rm odd}}(T)$
and $V_{\mbox{\scriptsize \rm even}}(T)$, respectively. 
Furthermore, we denote the sets of interior vertices of $T$ coloured 
 ``odd" and  ``even" by 
$(V_{\mbox{\scriptsize \rm int}})_{\mbox{\scriptsize \rm odd}}(T)$
and $(V_{\mbox{\scriptsize \rm int}})_{\mbox{\scriptsize \rm even}}(T)$, 
respectively.

\begin{lemma}
Let $T$ be a tree with at least two leaves and 
suppose we have a $2$-colouring of the vertex 
set of $T$ using the set $\{\rm odd, \rm even\}$. Then
$$
|V_{\mbox{\scriptsize \rm odd}}(T)|
\ge |(V_{\mbox{\scriptsize \rm int}})_{\mbox{\scriptsize \rm even}}(T)|+1.
$$
\label{2colour}
\end{lemma}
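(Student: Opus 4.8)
The plan is to prove the inequality by induction on the number of leaves of $T$, peeling off a leaf at each step. The base case is a tree with exactly two leaves, i.e. a path; here $(V_{\mbox{\scriptsize \rm int}})_{\mbox{\scriptsize \rm even}}(T)$ consists of those interior vertices of the path that received the colour ``even", and since the colours alternate along the path, there are at least as many ``odd" vertices overall (counting the two endpoints) as there are ``even" interior vertices plus one; a direct count settles it. For the inductive step, let $l$ be a leaf of $T$ and let $u$ be its unique neighbour. Form $T'$ by deleting $l$ (and its incident edge) and restrict the $2$-colouring to $T'$; note $T'$ still has at least two leaves provided $T$ had at least three, which we may assume since the two-leaf case is the base.

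The heart of the argument is a careful bookkeeping of how the two quantities change when we pass from $T'$ back to $T$. Deleting $l$ removes exactly one vertex, which is coloured either ``odd" or ``even"; and it may change the status of $u$ from interior to a leaf (this happens precisely when $u$ had degree two in $T$). So there are a few cases according to (a) the colour of $l$, (b) the colour of $u$, and (c) whether $u$ becomes a leaf in $T'$. In each case one checks that $|V_{\mbox{\scriptsize \rm odd}}(T)| - |V_{\mbox{\scriptsize \rm odd}}(T')|$ is at least as large as $|(V_{\mbox{\scriptsize \rm int}})_{\mbox{\scriptsize \rm even}}(T)| - |(V_{\mbox{\scriptsize \rm int}})_{\mbox{\scriptsize \rm even}}(T')|$, so that the inequality for $T'$ implies the one for $T$. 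For instance, if $l$ is coloured ``even" then the left-hand difference is $0$, so we need the right-hand difference to be $\le 0$: deleting $l$ can only remove an interior vertex (namely $u$, if $u$ drops to degree one) from the count, never add one, and $u$'s colour is ``odd" anyway (being adjacent to the ``even" vertex $l$), so the right-hand difference is exactly $0$ and we are done. If $l$ is coloured ``odd", then the left-hand difference is $1$, and one argues the right-hand difference is at most $1$: the only way $(V_{\mbox{\scriptsize \rm int}})_{\mbox{\scriptsize \rm even}}$ can grow by passing to $T$ is if $u$ was a leaf of $T'$ but is interior in $T$ and coloured ``even", which contributes exactly $+1$, and nothing else changes.

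The step I expect to be the main obstacle is getting the case analysis for the inductive step airtight — in particular keeping straight the interaction between the colour of $u$ (forced to be opposite to that of $l$ along the edge $lu$) and the two sub-cases of whether $\deg_T(u) = 2$ or $\deg_T(u) \ge 3$, since only in the former does $u$ migrate between the ``interior" and ``leaf" categories. A clean way to organise this is to write
$$
|V_{\mbox{\scriptsize \rm odd}}(T)| - |(V_{\mbox{\scriptsize \rm int}})_{\mbox{\scriptsize \rm even}}(T)|
= \Big(|V_{\mbox{\scriptsize \rm odd}}(T')| - |(V_{\mbox{\scriptsize \rm int}})_{\mbox{\scriptsize \rm even}}(T')|\Big) + \delta,
$$
where $\delta = \big(|V_{\mbox{\scriptsize \rm odd}}(T)| - |V_{\mbox{\scriptsize \rm odd}}(T')|\big) - \big(|(V_{\mbox{\scriptsize \rm int}})_{\mbox{\scriptsize \rm even}}(T)| - |(V_{\mbox{\scriptsize \rm int}})_{\mbox{\scriptsize \rm even}}(T')|\big)$, and then verify $\delta \ge 0$ in each case; the inductive hypothesis gives the parenthesised term is $\ge 1$, finishing the proof. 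An alternative, possibly slicker, route that avoids induction entirely is to root $T$ at an even leaf and injectively assign to each even interior vertex $v$ one odd vertex lying just below it (e.g. an odd child), then check this map misses at least one odd vertex (such as the even leaf's odd neighbour, or an odd leaf if one exists) — but the bookkeeping of making the assignment well-defined when an even interior vertex has no odd child among the relevant directions is itself delicate, so I would fall back on the induction.
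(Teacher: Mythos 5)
Your case analysis is exactly the paper's: the paper also deletes a leaf $v$ with neighbour $u$, splits on whether $u$ stays interior or becomes a leaf of $T'$, and then on the colour of $v$ (forcing the colour of $u$), and verifies in each case that the change in $|V_{\mbox{\scriptsize \rm odd}}|$ dominates the change in $|(V_{\mbox{\scriptsize \rm int}})_{\mbox{\scriptsize \rm even}}|$. Your bookkeeping in both colour cases is correct.

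The one genuine defect is the induction parameter. You induct on the number of leaves, but deleting a leaf $l$ does \emph{not} decrease the leaf count when $\deg_T(u)=2$: in that subcase $T'$ loses $l$ but gains $u$ as a new leaf, so $T$ and $T'$ have the same number of leaves and the inductive hypothesis cannot be invoked for $T'$. This is not a corner case you can legislate away by a clever choice of $l$ — in a spider whose three legs all have length two, \emph{every} leaf has a degree-two neighbour. Since this is precisely the subcase you flag as the delicate one (where $u$ migrates between the interior and leaf categories), the induction as set up does not close. The repair is immediate and is what the paper does: induct on $|V(T)|$ (the paper's base case is the two-vertex tree), which strictly decreases under leaf deletion no matter what; your $\delta\ge 0$ computation then goes through verbatim. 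Alternatively a lexicographic induction on (number of leaves, number of vertices) would work, but plain induction on the vertex count is cleaner and requires no other change to your argument.
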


\begin{proof}
The proof is by induction on the size $m$ of the vertex  set of $T$. 
If $m=2$, then a routine check shows that the lemma holds. Now suppose 
that $m\ge 3$ and the result holds for all trees with fewer than $m$ 
vertices. Let $v$ be a leaf of $T$ and let $T'$ be the tree obtained from $T$ 
by deleting $v$ and the edge incident with it. For ease of presentation, set 
$V_{\mbox{\scriptsize odd}}=V_{\mbox{\scriptsize odd}}(T)$, 
$V'_{\mbox{\scriptsize odd}}=V'_{\mbox{\scriptsize odd}}(T)$, 
$(V_{\mbox{\scriptsize int}})_{\mbox{\scriptsize even}}=
(V_{\mbox{\scriptsize int}})_{\mbox{\scriptsize \rm even}}(T)$, and 
$(V'_{\mbox{\scriptsize int}})_{\mbox{\scriptsize even}}=
(V_{\mbox{\scriptsize int}})_{\mbox{\scriptsize \rm even}}(T')$. Since 
$|V(T')|<m$ and the given 
$2$-colouring of $T$ induces a $2$-colouring of $T'$, it follows by the 
induction assumption that
\begin{align}
|V'_{\mbox{\scriptsize odd}}|\ge 
|(V'_{\mbox{\scriptsize int}})_{\mbox{\scriptsize even}}|+1.
\label{coloureqn}
\end{align}

Let $t$ and $t'$ denote the size of the leaf sets of $T$ 
and $T'$, respectively, and let $u$ 
denote the unique vertex adjacent to $v$ in 
$T$. We divide the 
rest of the proof into two cases depending upon whether $t'=t-1$, 
in which case the degree of $u$ in $T$ is at least three, or $t'=t$, 
in which case the degree of $u$ in $T$ is two. If $t'=t-1$. Then 
$V_{\mbox{\scriptsize int}}(T)=V_{\mbox{\scriptsize int}}(T')$. 
Therefore, 
by (\ref{coloureqn}),
\begin{align*}
|V_{\mbox{\scriptsize \rm odd}}|\ge |V'_{\mbox{\scriptsize odd}}|
\ge |(V'_{\mbox{\scriptsize int}})_{\mbox{\scriptsize even}}|+1
=|(V_{\mbox{\scriptsize int}})_{\mbox{\scriptsize even}}|+1,
\end{align*}
and the lemma holds.

Now suppose that $t'=t$. If $v$ is coloured even, 
then $(V_{\mbox{\scriptsize int}})_{\mbox{\scriptsize even}}
=(V'_{\mbox{\scriptsize int}})_{\mbox{\scriptsize even}}$ 
and so, by (\ref{coloureqn}),
\begin{align*}
|V_{\mbox{\scriptsize \rm odd}}|
\ge |V'_{\mbox{\scriptsize odd}}|
\ge |(V'_{\mbox{\scriptsize int}})_{\mbox{\scriptsize even}}|+1
=|(V_{\mbox{\tiny int}})_{\mbox{\scriptsize even}}|+1.
\end{align*}
So we may assume that $v$ is coloured odd. Then 
$|(V_{\mbox{\scriptsize int}})_{\mbox{\scriptsize even}}|=
|(V'_{\mbox{\scriptsize int}})_{\mbox{\scriptsize even}}|+1$ 
and $|V_{\mbox{\scriptsize \rm odd}}|=|V'_{\mbox{\scriptsize odd}}|+1$. 
Combining these two equations with (\ref{coloureqn}), it follows that
\begin{align*}
|V_{\mbox{\scriptsize \rm odd}}|=|V'_{\mbox{\scriptsize odd}}|+1
\ge |(V'_{\mbox{\scriptsize int}})_{\mbox{\scriptsize even}}|+2
=|(V_{\mbox{\scriptsize int}})_{\mbox{\scriptsize even}}|+1.
\end{align*}
This completes the proof of the lemma.
\end{proof}

Denoting for a vertex $v$ of a graph the degree of $v$ by $\deg(v)$
we are now ready to give the aforementioned characterization.

\begin{theorem}
\label{thm:maximum}
Let $\Pi$ be a compatible partition system on $X$ and 
let $\Pi_s$ be the unique strongly compatible partition system 
in $\setpa(\Sigma_{\Pi})$.
Then $|\Pi'| \le |\Pi_s|$, for all $\Pi'\in\setpa(\Sigma_{\Pi})$.
\label{thm:tc:sc:bound}
\end{theorem}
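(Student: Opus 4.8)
The plan is to show that $|\Pi'| \le |\Pi_s|$ by combining Corollary~\ref{lem:diam:bound}-style reasoning with a counting argument based on Lemma~\ref{2colour}, applied to the even $X$-tree $\Tt_{\Sigma_{\Pi}}$. By Theorem~\ref{main1}, since $\setpa(\Sigma_{\Pi})$ is non-empty, $\Tt:=\Tt_{\Sigma_{\Pi}}$ is an even $X$-tree, and $\Pi_s=\{\pi(v):v\in V_{\mbox{\scriptsize \rm odd}}(\Tt)\}$, so $|\Pi_s|=|V_{\mbox{\scriptsize \rm odd}}(\Tt)|$. So the goal is to prove that every $\Pi'\in\setpa(\Sigma_{\Pi})$ has at most $|V_{\mbox{\scriptsize \rm odd}}(\Tt)|$ partitions.

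First I would fix $\Pi'\in\setpa(\Sigma_{\Pi})$ and, for each $\pi\in\Pi'$, choose a subset $E_{\pi}\subseteq E(\Tt)$ that displays $\pi$; by Lemma~\ref{displaypi}(i), $|E_{\pi}|=|\pi|$, and since $\Sigma_{\Pi'}=\Sigma(\Tt)=\Sigma_{\Pi}$, these edge sets partition (as a multiset) the edge set $E(\Tt)$ — i.e. $\sum_{\pi\in\Pi'}|\pi|=|E(\Tt)|$. The key structural fact to exploit is Lemma~\ref{displaypi}(ii): for each $\pi$, every path between labelled vertices crosses exactly $0$ or $2$ edges of $E_{\pi}$. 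The idea is to use this to bound $|\pi|=|E_{\pi}|$ from below in terms of how many interior vertices of $\Tt$ are ``touched'' by $E_{\pi}$, or more directly, to assign to each $\pi$ a distinct odd vertex. A cleaner route: for each $\pi\in\Pi'$ with associated $E_{\pi}$, apply Corollary~\ref{two}'s converse structure — or rather, reconsider. Each $E_{\pi}$ splits $\Tt$ into $|\pi|+1$ components (Lemma~\ref{displaypi}(i)). The natural approach is to show $|\Pi'|\le |V_{\mbox{\scriptsize \rm odd}}(\Tt)|$ by an injection: pick a leaf $\ell$, let $u$ be its neighbour (odd, by Lemma~\ref{littleeven}(ii)); by the argument in the uniqueness proof of Theorem~\ref{main1}, some $\pi_1\in\Pi'$ is a refinement of $\pi(u)$ with $\phi^{-1}(\ell)\in\pi_1$. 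Then contract a displaying edge set of $\pi_1$ (Lemma~\ref{tinyeven} gives a smaller even $X$-tree), apply Lemma~\ref{lem:union}(ii) to get $\Pi'-\{\pi_1\}\in\setpa(\Sigma(\Tt/E_{\pi_1}))$, and induct on $|E(\Tt)|$; the inductive hypothesis gives $|\Pi'|-1\le |V_{\mbox{\scriptsize \rm odd}}(\Tt/E_{\pi_1})|$, and one checks $|V_{\mbox{\scriptsize \rm odd}}(\Tt/E_{\pi_1})|\le |V_{\mbox{\scriptsize \rm odd}}(\Tt)|-1$ since the contraction merges a set of vertices that includes $u$ (each $E_{\pi_1}$-component contributing a part sits, by evenness and Lemma~\ref{displaypi}(ii), on one colour class). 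Alternatively — and this is where Lemma~\ref{2colour} enters — count directly: $\sum_{\pi\in\Pi'}|\pi| = |E(\Tt)| = |V(\Tt)|-1$, and one argues $|\pi|\ge (\text{number of even interior vertices ``used'' by }\pi)+1$ won't quite close, so the induction is the safer bet.

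The main obstacle I expect is verifying cleanly that contracting a displaying edge set $E_{\pi_1}$ of a single partition $\pi_1\in\Pi'$ strictly decreases $|V_{\mbox{\scriptsize \rm odd}}|$, i.e.\ that $|V_{\mbox{\scriptsize \rm odd}}(\Tt/E_{\pi_1})| \le |V_{\mbox{\scriptsize \rm odd}}(\Tt)| - 1$. This requires understanding which vertices get identified under the contraction and what colour the merged vertex receives in the (unique, by Theorem~\ref{main1} applied to the even tree $\Tt/E_{\pi_1}$) even/odd $2$-colouring. The point is that each of the $|\pi_1|+1$ components of $\Tt\backslash E_{\pi_1}$ is monochromatic in an appropriate sense relative to the contraction — more precisely, the contracted tree being even forces a consistent recolouring, and the number of odd vertices lost is $|E_{\pi_1}| - |\pi_1| + 1 = 1$ net only if exactly the right cancellation happens; I would handle this by choosing $\pi_1$ to be a refinement of $\pi(u)$ for a pendant vertex $u$ as above, so that $E_{\pi_1}$ includes all edges at $u$ plus possibly further ``downstream'' edges, and track the colour bookkeeping leaf-by-leaf as in the uniqueness argument of Theorem~\ref{main1}. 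The base case ($|E(\Tt)|=2$, a single interior vertex) is immediate. This reduces the whole theorem to the combinatorial colour-counting encapsulated by Lemma~\ref{2colour}, which is presumably why that lemma was isolated.
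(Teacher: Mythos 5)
Your overall strategy is the paper's: identify $|\Pi_s|$ with $|V_{\mbox{\scriptsize odd}}(\Tt)|$ via Theorem~\ref{thm:tree:sc}, remove one partition $\pi_1$ from $\Pi'$, contract a displaying edge set $E_{\pi_1}$ to obtain a smaller even $X$-tree (Lemma~\ref{tinyeven} together with Lemma~\ref{lem:union}(ii)), and induct, so that everything rests on the inequality $|V_{\mbox{\scriptsize odd}}(\Tt/E_{\pi_1})|\le|V_{\mbox{\scriptsize odd}}(\Tt)|-1$. But that inequality is exactly the step you leave open (``the main obstacle I expect''), and the sketch you offer for it does not work as stated: the components of $\Tt\backslash E_{\pi_1}$ that contain labelled vertices are \emph{not} monochromatic in the even/odd colouring (any component containing an edge meets both colour classes), and the bookkeeping ``$|E_{\pi_1}|-|\pi_1|+1=1$'' is vacuous since $|E_{\pi_1}|=|\pi_1|$ always. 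So the heart of the proof is missing.

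What the paper does to close this gap: by Lemma~\ref{displaypi}(i), exactly one component of $\Tt\backslash E_{\pi_1}$, with vertex set $V_t$ say, is unlabelled, and one checks (using Lemma~\ref{displaypi}(ii)) that every edge of $E_{\pi_1}$ has one endpoint in $V_t$. If $|V_t|=1$, say $V_t=\{u\}$, then all edges incident with $u$ are contracted; if $u$ is odd the odd count drops by exactly $1$, and if $u$ is even its $\deg(u)\ge 2$ odd neighbours merge into a single vertex, so the odd count drops by $\deg(u)-1\ge 1$. If $|V_t|\ge 2$, let $\Tt_t$ be the subtree induced by $V_t$ and let $\Tt_t^{+}$ be the subtree with edge set $E(\Tt_t)\cup E_{\pi_1}$; its leaves are precisely the outer endpoints of the edges in $E_{\pi_1}$ and its interior vertices are precisely $V_t$, so Lemma~\ref{2colour} gives $|V_{\mbox{\scriptsize odd}}(\Tt_t^{+})|\ge|(V_t)_{\mbox{\scriptsize even}}|+1$, and combined with the identity $|V_{\mbox{\scriptsize odd}}(\Tt)|-|V_{\mbox{\scriptsize odd}}(\Tt/E_{\pi_1})|=|V_{\mbox{\scriptsize odd}}(\Tt_t^{+})|-|(V_t)_{\mbox{\scriptsize even}}|$ (the parities of the vertices of $V_t$ flip under the contraction) this yields the required drop. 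Note also that no special choice of $\pi_1$ as a refinement of $\pi(u)$ for a pendant vertex $u$ is needed; the argument works for an arbitrary $\pi_1\in\Pi'$, and restricting the choice does not by itself make the colour accounting any easier.
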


\begin{proof}
Let $\Pi'\in \mathbb P(\Sigma_{\Pi})$ and $ \pi_1\in\Pi'$.
Let $\Pi'_1$ denote $\Pi'-\{\pi_1\}$.
Since $\Tt_{\Pi}\cong \Tt_{\Pi'}$ and, by Lemma~\ref{lem:union}(ii),
 $\Sigma_{\Pi'}=\Sigma_{\Pi-\{\pi_1\}}=\Sigma_{\Pi}-\Sigma_{\pi_1}$
holds it follows that 
$\Tt_{\Pi'_1}\cong \Tt_{\Pi}/ E_{\pi_1}$, 
where $E_{\pi_1}$ is a subset of edges of $\Tt_{\Pi}$ that 
displays $\pi_1$. Since, by Theorem~\ref{thm:tree:sc},  
$\Tt_{\Pi}$ is even,  
Lemma~\ref{tinyeven} implies that $\Tt_{\Pi'_1}$ is even. 
Put $V_{\mbox{\scriptsize odd}}=V_{\mbox{\scriptsize odd}}(\Tt_{\Pi})$
and $(V'_1)_{\mbox{\scriptsize odd}}= 
V_{\mbox{\scriptsize odd}}(\Tt_{\Pi'_1})$. We next show that
\begin{align}
|V_{\mbox{\scriptsize \rm odd}}|\ge |(V'_1)_{\mbox{\scriptsize odd}}|+1
\label{oddeqn}
\end{align}
holds which will be crucial for an inductive argument on the edge set of
$\Tt_{\Pi}$ which will allow us to establish the theorem.

To observe~(\ref{oddeqn}), let $V_1, V_2, \ldots, V_t$ denote 
the vertex sets of the components of 
$\Tt_{\Pi}\backslash E_{\pi_1}$. By Lemma~\ref{displaypi}(i), 
precisely one of these vertex sets has the property that no vertex 
is labelled. Without loss of generality, we may assume that this vertex set 
is $V_t$. We consider two cases depending on the size of $V_t$. Suppose 
first that 
$|V_t|=1$, and let $V_t=\{u\}$. If $u$ is odd, then each of the $\deg(u)$ 
vertices adjacent to $u$ is even, and it follows that 
$(V'_1)_{\mbox{\scriptsize odd}}$ has exactly one less vertex than 
$V_{\mbox{\scriptsize \rm odd}}$. In particular, (\ref{oddeqn}) holds. 
If $u$ is even, then each of the $\deg(u)$ vertices adjacent to $u$ is
odd. Therefore
\begin{align*}
|V_{\mbox{\scriptsize \rm odd}}|=|(V'_1)_{\mbox{\scriptsize odd}}| 
+ \deg(u) -1.
\end{align*}
But $\deg(u)-1\ge 1$ as $\deg(u)\ge 2$, and so (\ref{oddeqn}) holds.

Now suppose that $|V_t|\ge 2$. Let $\Tt_t$ be the subtree of $\Tt_{\Pi}$ 
induced by $V_t$ and let 
$\Tt^+_t$ be the subtree of $\Tt_{\Pi}$ whose edge set is precisely 
$E(\Tt_t)\cup E_{\pi_1}$. Let 
$(V_t)_{\mbox{\scriptsize even}}=V_{\mbox{\scriptsize even}}(\Tt_t)$
 and 
$(V^+_t)_{\mbox{\scriptsize odd}}=V_{\mbox{\scriptsize odd}}(\Tt^+_t)$
 Then
\begin{align*}
|(V'_1)_{\mbox{\scriptsize odd}}| = |(V_t)_{\mbox{\scriptsize even}}| 
+ \big(|V_{\mbox{\scriptsize odd}}| - |(V^+_t)_{\mbox{\scriptsize odd}}|\big),
\end{align*}
and therefore
\begin{align*}
|V_{\mbox{\scriptsize \rm odd}}|-|(V'_1)_{\mbox{\scriptsize odd}}|=
|(V^+_t)_{\mbox{\scriptsize odd}}|-|(V_t)_{\mbox{\scriptsize even}}|.
\end{align*}
Since 
$(V_{\mbox{\scriptsize \rm int}})_{\mbox{\scriptsize \rm even}}(\Tt^+_t)=
(V_t)_{\mbox{\scriptsize \rm even}}$, we have
$|(V^+_t)_{\mbox{\scriptsize odd}}|-|(V_t)_{\mbox{\scriptsize even}}|\ge 1$ 
by Lemma~\ref{2colour}
and so (\ref{oddeqn}) follows.

Having established (\ref{oddeqn}), we complete the proof of the theorem 
by induction on the size of the edge set $E_{\Pi}$ of ${\mathcal T}_{\Pi}$. 
If $|E_{\Pi}|=2$, then $\Pi$ is the unique partition system in 
${\mathbb P}(\Sigma_{\Pi})$. In particular, $\Pi$ is the unique 
strongly compatible partition system in ${\mathbb P}(\Sigma_{\Pi})$ 
and so the theorem holds. Now assume that the theorem holds for all 
compatible partition systems whose corresponding even $X$-tree has fewer 
edges than ${\mathcal T}_{\Pi}$. Let $\Pi'$, $\pi_1$, and $\Pi'_1$ be as 
defined at the beginning of the proof. Then, as observed there, 
$\Tt_{\Pi_1'}$ must be even. By 
Theorem~\ref{thm:tree:sc}, ${\mathbb P}(\Sigma_{\Pi_1'})$
must contain a unique strongly compatible partition system 
$(\Pi'_1)_s$ on $X$. But then
$|\Pi'_1| \le |(\Pi'_1)_s|$, by induction assumption. Combined with
(\ref{oddeqn}) and Theorem~\ref{thm:tree:sc} which
implies that $|V_{\mbox{\scriptsize odd}}|=|\Pi_s|$
and $|(V'_1)_{\mbox{\scriptsize odd}}|= |(\Pi_1')_s|$ hold,
we obtain
\begin{align*}
|\Pi'| = |\Pi'_1|+1 \le |(\Pi'_1)_s| + 1 
=|(V'_1)_{\mbox{\scriptsize odd}}| + 1 \le |V_{\mbox{\scriptsize odd}}| 
= |\Pi_s|.
\end{align*}
This completes the proof of the theorem.
\end{proof}

As we have seen in the example presented in the
introduction, for a compatible split system $\Sigma$
with ${\mathbb P}(\Sigma) \neq \emptyset$, the strongly 
compatible partition system in ${\mathbb P}(\Sigma)$ is not necessarily 
the only partition system in ${\mathbb P}(\Sigma)$ of maximum size.
For such $\Sigma$, it could therefore be of interest to try to characterize the
set of partition systems in ${\mathbb P}(\Sigma)$ of maximum size.
In regards to this, it is worth noting that in case
$\Sigma$ is a compatible set of splits
corresponding to a phylogenetic $X$-tree with all interior 
vertices of degree three, then 
it is not difficult to show that there
is a unique partition system in ${\mathbb P}(\Sigma)$ of 
maximum size, namely the strongly compatible partition system.


\section{Constructing Minimum-Sized Partition Systems}
\label{minsized}

We now turn our attention to the problem
of understanding minimum elements in the set 
$\setpa(\Sigma)$ 
for a compatible split system $\Sigma$.
More specifically, we construct, for an even $X$-tree $\Tt$,  
a {\em ${\mathbb P}(\Tt)$-minimum} partition system on $X$,
that is, a partition system  $ \Pi$ on $X$
such that $\Sigma(\mathcal T)=\Sigma_{\Pi}$
and $\Pi$ is of minimum size with respect to this property. 
The construction is 
presented in the form of the \textsc{MinSizePartition} algorithm
in Fig.~\ref{algorithm:smallest-partition-system}. 
It will make use of the following decomposition of a weak $X$-tree.

Let ${\mathcal T}=(T;\phi)$ be a weak $X$-tree with edge set $E$ and let 
$v$ be a labelled interior vertex of $\mathcal T$. Suppose that $v$ has 
degree~$k\geq 2$. Now partition $E$ so that, for all edges $e$ and $f$, 
we have $e$ and $f$ in the same part if and only if the path from $e$ to 
$f$ in $\mathcal T$ avoids $v$. Let $\{E_1, E_2, \ldots, E_k\}$ denote the 
resulting partition on $E$. For each $i\in[k]=\{1,\dots,k\}$, $k \ge 2$, 
let $e_i$ denote the unique edge in $E_i$ 
incident with $v$ in $\mathcal T$ and let $A_i|B_i$ denote the $X$-split 
corresponding to $e_i$, where $\phi^{-1}(v)\subseteq B_i$. For each 
$i\in [k]$, 
let ${\mathcal T}_i$ denote the weak $X$-tree induced by $E_i$, where 
the label of every vertex of $\mathcal T$ is retained except for 
$v$ whose label changes to $B_i$. The collection 
$\{{\mathcal T}_1, {\mathcal T}_2, \ldots, {\mathcal T}_k\}$ is called 
the {\em decomposition of $\mathcal T$ with respect to $v$} and is denoted 
by ${\mathcal D}({\mathcal T}, v)$. To illustrate the decomposition, consider 
the even $X$-tree $\mathcal T$ shown in 
Fig.~\ref{fig:decom}, where $X=\{1, 2, 3, 4, 5, 6, 7\}$. The decomposition 
of $\mathcal T$ with respect to the vertex labelled $3$ is shown in the 
right of that figure.

\begin{figure}[h]
\center
\input{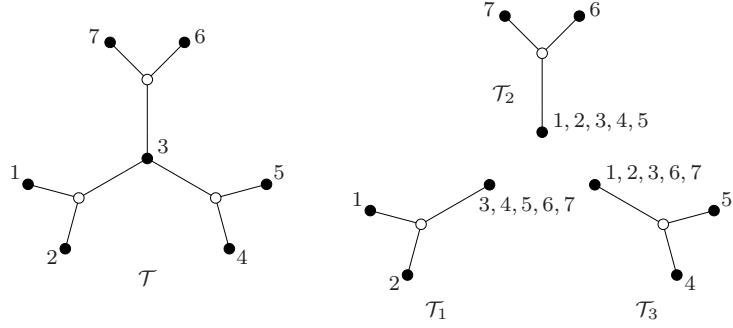}
\caption{A weak $X$-tree $\mathcal T$ with $X=\{1, 2, \ldots, 7\}$ (left) and
the decomposition $\{\mathcal T_1, \mathcal T_2, \mathcal T_3\}$ of $\mathcal
T$ with respect to the vertex labelled $3$ (right). Filled vertices denote
labelled vertices.}
\label{fig:decom}
\end{figure}

Two observations that we freely use in the rest of this section
 are the following. First,
\begin{align*}
\Sigma({\mathcal T}) = \Sigma({\mathcal T}_1)\uplus 
\Sigma({\mathcal T}_2)\uplus \cdots\uplus \Sigma({\mathcal T}_k)
\end{align*}
and, for all distinct $i,j\in [k]$, we have 
$\Sigma(\mathcal T_i)\cap \Sigma(\mathcal T_j)=\emptyset$. Second, 
if $\mathcal T$ is an even $X$-tree, then each of the weak $X$-trees
${\mathcal T}_1, {\mathcal T}_2, \ldots, {\mathcal T}_k$ is even.

The next lemma will be used later in this section.

\begin{lemma}
Let $\Tt$ be a weak $X$-tree, and let $v$ be a 
labelled interior vertex 
of ${\mathcal T}$. Let ${\mathcal D}({\mathcal T}, v)=
\{{\mathcal T}_1, {\mathcal T}_2, \ldots, {\mathcal T}_k\}$ and 
let $\Pi$ be a partition system on $X$. Then
$\Pi\in {\mathbb P}(\mathcal T)$ if and only 
if there is a partition $\{\Pi_1, \Pi_2, \ldots, \Pi_k\}$ of 
$\Pi$ such that, for all $i\in[k]$, we 
have $\Pi_i\in {\mathbb P}(\mathcal T_i)$. Moreover, 
if $\Pi\in \setpa(\mathcal T)$, then such a partition of $\Pi$ 
is unique.
\label{lem:decomp}
\end{lemma}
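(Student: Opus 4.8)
The plan is to prove both directions by working split-by-split, using the two ``freely used'' observations about the decomposition together with Lemma~\ref{lem:union}. Recall that $\Sigma(\Tt)=\biguplus_{i=1}^k\Sigma(\Tt_i)$ with the $\Sigma(\Tt_i)$ pairwise disjoint as \emph{sets}, and that each $\Tt_i$ displays exactly those splits of $\Tt$ whose ``small side'' lies in $B_i$ (equivalently, whose restriction to $E_i$ is a single edge). For the easy direction, suppose $\{\Pi_1,\dots,\Pi_k\}$ partitions $\Pi$ with $\Pi_i\in\setpa(\Tt_i)$ for each $i$. Then $\Sigma_\Pi=\biguplus_i\Sigma_{\Pi_i}=\biguplus_i\Sigma(\Tt_i)=\Sigma(\Tt)$, so $\Pi\in\setpa(\Tt)$; here one has to be slightly careful that $\Sigma_{\Pi_i}$, computed with bipartitions of $X$, agrees with $\Sigma(\Tt_i)$ — but this is exactly the meaning of $\Pi_i\in\setpa(\Tt_i)$ once we note that splitting a part $A|B_i$ of a partition of $X$ in $\Tt_i$ against $X$ gives the same bipartition as splitting it in $\Tt$ (the ground set is $X$ throughout).

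For the forward direction, let $\Pi\in\setpa(\Tt)$ and let $\pi\in\Pi$. I claim $\pi$ ``belongs'' to a unique $\Tt_i$. Fix a subset $E_\pi\subseteq E(\Tt)$ displaying $\pi$. By Lemma~\ref{displaypi}(ii), every path between two labelled vertices of $\Tt$ meets $E_\pi$ in $0$ or $2$ edges; since $v$ is a labelled interior vertex, in particular every path from $v$ to a leaf meets $E_\pi$ in $0$ or $2$ edges. I want to show that all edges of $E_\pi$ lie in a single part $E_i$ of the decomposition. Suppose not, so $E_\pi$ contains an edge $f\in E_i$ and an edge $f'\in E_j$ with $i\ne j$. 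Pick a leaf $\ell$ in the component of $\Tt\backslash f$ not containing $v$ and a leaf $\ell'$ in the component of $\Tt\backslash f'$ not containing $v$; the path from $\ell$ to $\ell'$ passes through $v$ and crosses both $f$ and $f'$, hence contains at least two edges of $E_\pi$ on each ``side'' of $v$ unless... — here is where I must be careful: I will instead argue via the $|\pi|+1$ components from Lemma~\ref{displaypi}(i). The components of $\Tt\backslash E_\pi$ have label sets $\pi\cup\{\emptyset\}$, and $v$ lies in exactly one of them; the empty-label component sits ``beyond'' some edge of $E_\pi$, and for $\pi$ to be a partition of $X$ all remaining edges of $E_\pi$ must separate $v$ from distinct parts, forcing them all into the single $E_i$ with $v\notin V(\Tt_i)$ — more precisely, into the $E_i$ containing the edge of $E_\pi$ closest to $v$, with all of $E_\pi$ on the far side of that edge from $v$. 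Granting this, define $\Pi_i=\{\pi\in\Pi:\text{the (unique) }E_i\text{ containing a displaying set for }\pi\text{ is the }i\text{-th}\}$; one checks this assignment is well-defined independent of the choice of $E_\pi$ (two displaying sets for the same $\pi$ give the same partition of components, hence the same $i$), so $\{\Pi_1,\dots,\Pi_k\}$ is a genuine partition of $\Pi$, and restricting the displaying edges to $E_i$ exhibits $\Sigma_{\Pi_i}\subseteq\Sigma(\Tt_i)$; summing over $i$ and using $\Sigma_\Pi=\Sigma(\Tt)=\biguplus_i\Sigma(\Tt_i)$ with the pieces disjoint forces equality $\Sigma_{\Pi_i}=\Sigma(\Tt_i)$, i.e.\ $\Pi_i\in\setpa(\Tt_i)$.

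Finally, uniqueness when $\Pi\in\setpa(\Tt)$: any valid partition $\{\Pi_1,\dots,\Pi_k\}$ must assign each $\pi\in\Pi$ to an $i$ with $\Sigma_\pi\subseteq\Sigma(\Tt_i)$, and since the $\Sigma(\Tt_i)$ are pairwise disjoint \emph{and} each nonempty split-set $\Sigma_\pi$ (note $|\pi|\ge2$, so $\Sigma_\pi\ne\emptyset$) can be contained in at most one of them, the index $i$ is forced. Hence the partition of $\Pi$ is unique. The main obstacle is the core geometric claim in the forward direction — that a displaying edge-set $E_\pi$ for a single partition $\pi$ cannot straddle two branches at the labelled vertex $v$ — and I expect to carry it out cleanly by invoking Lemma~\ref{displaypi}(i) (the components of $\Tt\backslash E_\pi$ realize $\pi\cup\{\emptyset\}$) rather than by ad hoc path-counting: if $E_\pi$ met two distinct parts $E_i,E_j$ of $\mathcal D(\Tt,v)$, then $v$ together with a leaf of $\Tt$ in a third position (or the structure of the empty component) would produce a part of $\pi$ properly containing $\phi^{-1}(v)$ together with extra elements from \emph{both} branches, contradicting that the edges of $E_\pi$ on the $\Tt_j$-side already account for the $B_j$-part; spelling this out carefully is the one place real care is needed.
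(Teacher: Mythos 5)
Your overall route is the same as the paper's: the backward direction via Lemma~\ref{lem:union} and the disjointness of the $\Sigma(\Tt_i)$; the forward direction by showing that any displaying set $E_\pi$ lies inside a single part $E_i$ of the edge partition at $v$; and uniqueness from the disjointness of the $\Sigma(\Tt_i)$ together with $\Sigma_\pi\neq\emptyset$. The backward direction and the uniqueness argument are fine. The problem is that the one nontrivial step --- your ``core geometric claim'' that $E_\pi$ cannot meet two distinct parts $E_i$ and $E_j$ --- is never actually proved. You start a path-counting argument and abandon it, replace it with a component argument that you only state up to ``Granting this'', and close by saying that spelling it out ``is the one place real care is needed''. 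That claim \emph{is} the content of the forward direction, so as written the proof has a gap. Moreover, the sketch you do give misdescribes the structure: the unlabelled component of $\Tt\backslash E_\pi$ does not sit ``beyond some edge of $E_\pi$''; it is the central component incident with \emph{every} edge of $E_\pi$, while each labelled component meets exactly one edge of $E_\pi$ (the one displaying its part) --- and that is exactly the fact one would need to extract from Lemma~\ref{displaypi}(i). Also ``the single $E_i$ with $v\notin V(\Tt_i)$'' makes no sense, since $v$ is a vertex of every $\Tt_i$ by construction.

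The good news is that the path-counting argument you abandoned does work and is the quickest fix; it uses only Lemma~\ref{displaypi}(ii) and the hypothesis that $v$ is \emph{labelled}. Suppose $f\in E_\pi\cap E_i$ and $f'\in E_\pi\cap E_j$ with $i\neq j$. Choose a leaf $\ell$ in the component of $\Tt\backslash f$ not containing $v$, and a leaf $\ell'$ in the component of $\Tt\backslash f'$ not containing $v$. The path from $\ell$ to $v$ contains $f$, so by Lemma~\ref{displaypi}(ii) applied to the labelled vertices $\ell$ and $v$ it contains exactly two edges of $E_\pi$, all of which lie in $E_i$; symmetrically, the path from $\ell'$ to $v$ contains exactly two edges of $E_\pi$, all in $E_j$. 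The path from $\ell$ to $\ell'$ is the concatenation of these two paths, and $E_i\cap E_j=\emptyset$, so it contains at least four edges of $E_\pi$, contradicting Lemma~\ref{displaypi}(ii) for the pair $\ell,\ell'$. (The paper instead observes directly that the path between any two edges of $E_\pi$ avoids the labelled vertex $v$, which amounts to the same structural fact.) With this inserted, the rest of your argument goes through.
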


\begin{proof}
Suppose first that there is a partition $\{\Pi_1, \Pi_2, \ldots, \Pi_k\}$ 
of $\Pi$ such that, for all $i\in[k]$, we have 
$\Pi_i\in {\mathbb P}(\mathcal T_i)$. Then, as
$$
\Sigma({\mathcal T}) = \Sigma({\mathcal T}_1) 
\uplus \Sigma({\mathcal T}_2) \uplus \cdots 
\uplus \Sigma({\mathcal T}_k)
$$
and $\Sigma_{\Pi_i}= \Sigma(\Tt_i)$ holds for all $i\in[k]$, 
Lemma~\ref{lem:union} implies
$$
\Pi = \Pi_1 \uplus \Pi_2 \uplus \cdots \uplus \Pi_k 
\in {\mathbb P}(\mathcal T).
$$

Conversely,  suppose that $\Pi\in {\mathbb P}(\mathcal T)$. 
For each $i\in[k]$, let $E_i$ denote the edge set of ${\mathcal T}_i$. 
Let $\pi\in \Pi$ and let $E_{\pi}$ be a subset of edges of 
$\mathcal T$ that displays $\pi$. If $E_{\pi}$ contains
distinct edges $e$ and $f$, then with $x\in e$ and $y\in f$ such that
$x$ and $y$ lie on the path from $a\in e-\{x\}$ to $b\in f-\{y\}$,
it is easily seen that the path 
from $x$ to $y$ avoids 
the labelled vertex $v$. In particular, $E_{\pi}\subseteq E_i$ for some 
$i\in[k]$. Furthermore, as 
$\Sigma({\mathcal T}_i)\cap \Sigma({\mathcal T}_j)=\emptyset$ for all 
distinct $i,j\in [k]$, there is a unique $i^*\in[k]$ for which 
$E_{\pi}\subseteq E_{i^*}$. Now let $\{\Pi_1, \Pi_2, \ldots, \Pi_k\}$ denote 
the unique partition of $\Pi$ such that, for all $i\in[k]$, we have 
$\Sigma_{{\Pi}_i}\subseteq \Sigma({\mathcal T}_i)$. But 
$\Sigma_{\Pi}=\Sigma({\mathcal T})$ and so, for all $i\in[k]$, we have 
$\Sigma_{{\Pi}_i}=\Sigma({\mathcal T}_i)$, that is,
$\Pi_i\in {\mathbb P}(\mathcal T_i)$. This completes the proof of the lemma.
\end{proof}

For an even $X$-tree $\Tt$, we next present
our construction \textsc{MinSizePartition} 
in the form of pseudo-code and establish its correctness 
in Theorem~\ref{theo:verify}. 
For example, for the even $X$-tree $\Tt$ depicted in Fig.~\ref{trees}
the ${\mathbb P}(\Tt)$-minimum partition system
that we construct is the partition system $\Pi_6$
given in the introduction. 

For a weak $X$-tree $\mathcal T=(T; \phi)$ in which all 
interior vertices are unlabelled, set $\pi_{\min}(\mathcal T)$ 
to be the partition
$$
\pi_{\min}({\mathcal T})=\{\phi^{-1}(v): \mbox{$v$ is a leaf 
of $\mathcal T$}\}
$$
of $X$. Note that 
$\Sigma_{\pi_{\min}({\mathcal T})}\subseteq \Sigma({\mathcal T})$
and that for the even $X$-tree $\Tt$ depicted in Fig.~\ref{trees}
we have $\pi_{\min}({\mathcal T})=\{1|2|3|4|5|6\}$.

\begin{figure}[h]
\centering
\parbox{0cm}{\begin{tabbing}
XX\= XX\= XX\=  XX\= XX\= XX\= XX\=  XXXXX\=  XXXXXXXXXXX\= \kill \\
{\textsc{MinSizePartition}($\mathcal T$)} \\
\rule{\columnwidth}{0.5pt}\\
Input: \> \> \> An even $X$-tree $\mathcal T$. \\
Output: \> \> \> A  partition system $\Pi_{\min}({\mathcal T})$ on $X$ that is 
${\mathbb P}({\mathcal T})$-minimum. \\
\rule{\columnwidth}{0.5pt} \\
If there exists an interior vertex $v$ in $\mathcal T$ that is labelled \\
\> Construct the decomposition ${\mathcal D}({\mathcal T},v)$, 
say $\{{\mathcal T}_1, {\mathcal T}_2, \dots, {\mathcal T}_k\}$, of 
$\mathcal T$ \\
\> For each $i\in [k]$, call 
{\sc MinSizePartition}($\mathcal T_i$) \\
\> Return $\Pi_{\min}({\mathcal T})\leftarrow \Pi_{\min}({\mathcal T}_1)\mcup 
\Pi_{\min}({\mathcal T}_2)\mcup \cdots\mcup \Pi_{\min}({\mathcal T}_k)$ \\
Else, set $\pi_{\min}=\pi_{\min}({\mathcal T})$ and 
set $\Sigma'=\Sigma({\mathcal T})-\Sigma_{\pi_{\min}}$ \\
\> If $\Sigma'$ is non-empty \\
\> \> Construct the even $X$-tree $\mathcal T'$ for which 
$\Sigma({\mathcal T'})=\Sigma'$ \\
\> \> Call {\sc MinSizePartition}($\mathcal T'$) \\
\> \> Return $\Pi_{\min}({\mathcal T})\leftarrow \{\pi_{\min}\}\mcup 
\Pi_{\min}({\mathcal T}')$ \\
\> Else \\
\> \> Return $\Pi_{\min}({\mathcal T})\leftarrow \{\pi_{\min}\}$ \\
\> Endif \\
Endif \\
\end{tabbing}}
\caption{Pseudo-code for \textsc{MinSizePartition}.}
\label{algorithm:smallest-partition-system}
\end{figure}

To establish the correctness of \textsc{MinSizePartition}, we make use of 
the next lemma.

\begin{lemma}
Let $\mathcal T$ be an even $X$-tree with no labelled interior 
vertices. Then there exists a ${\mathbb P}({\mathcal T})$-minimum 
partition system that contains $\pi_{\min}({\mathcal T})$.
\label{lem:min}
\end{lemma}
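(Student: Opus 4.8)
The plan is to show that if $\Pi$ is \emph{any} ${\mathbb P}({\mathcal T})$-minimum partition system, then we can modify it into another ${\mathbb P}({\mathcal T})$-minimum partition system that actually contains $\pi_{\min}({\mathcal T})$, without increasing the size. Fix such a minimum $\Pi$, and consider a leaf $l$ of ${\mathcal T}$ with adjacent vertex $u$; since ${\mathcal T}$ has no labelled interior vertices, $u$ is unlabelled and $\phi^{-1}(u)=\emptyset$. By Lemma~\ref{labelvertex} there is a partition $\pi_0 \in \Pi$ with $\phi^{-1}(l) \in \pi_0$. The key structural observation I would establish first is that every part of $\pi_{\min}({\mathcal T})$, i.e.\ every singleton block $\phi^{-1}(v)$ for $v$ a leaf, is ``cut out'' by some partition in $\Pi$ in a controlled way: applying Lemma~\ref{labelvertex} at each leaf together with Lemma~\ref{displaypi}(ii), the partition witnessing a given leaf must in fact be a refinement of $\pi(u)$ near that leaf, so the edge pendant to each leaf lies in $E_{\pi}$ for some $\pi \in \Pi$.

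The main step is an exchange/replacement argument. Starting from $\Pi$, I would build $\Pi' = \{\pi_{\min}({\mathcal T})\} \uplus \Pi''$, where $\Pi''$ is a partition system with $\Sigma_{\Pi''} = \Sigma({\mathcal T}) - \Sigma_{\pi_{\min}({\mathcal T})}$, and argue that $\Pi''$ can be chosen with $|\Pi''| \le |\Pi| - 1$. To get $\Pi''$ in the first place one needs $\Sigma({\mathcal T}) - \Sigma_{\pi_{\min}({\mathcal T})}$ to be both compatible and to correspond to an even $X$-tree; compatibility is immediate since it is a sub-multiset of a compatible system, and evenness follows from Lemma~\ref{tinyeven} (or directly from Corollary~\ref{two}, noting that the pendant edges form a set $F$ meeting every leaf-to-leaf path in exactly $0$ or $2$ edges), after which Theorem~\ref{main1} guarantees a partition system realizing it. The size bound is where the real work lies: I would induct on $|\Sigma({\mathcal T})|$, peeling off one leaf's pendant edge at a time, and show that removing $\Sigma_{\pi_{\min}}$ from $\Sigma$ decreases the minimum partition-system size by at least $1$ — equivalently, that from a minimum $\Pi$ for ${\mathcal T}$ one can extract a partition system of size $|\Pi|-1$ for ${\mathcal T}'$. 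Concretely: take $\pi_0 \in \Pi$ containing $\phi^{-1}(l)$; either $\pi_0 = \pi_{\min}({\mathcal T})$ (so $\Pi - \{\pi_0\}$ works directly with Lemma~\ref{lem:union}(ii)), or $\pi_0$ is a proper refinement of the relevant pendant blocks, and one replaces $\pi_0$ (and possibly coalesces it with another partition of $\Pi$) by a coarser partition, using strong-compatibility-style arguments as in the proof of Theorem~\ref{thm:tree:sc} to control the interaction with the other members of $\Pi$.

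I expect the main obstacle to be exactly this size-control step: showing that replacing whatever partitions of $\Pi$ are responsible for separating all the leaves by the single partition $\pi_{\min}({\mathcal T})$ does not force us to add more than one new partition elsewhere to compensate for the splits we have lost. The danger is that a partition $\pi_0$ witnessing $\phi^{-1}(l)$ might simultaneously be carrying several ``non-pendant'' splits that are not covered by $\pi_{\min}({\mathcal T})$, so that deleting it and adding $\pi_{\min}({\mathcal T})$ creates a deficit of more than one partition. Handling this requires a careful case analysis: if $\pi_0 = \pi(u)$ or a refinement thereof we are fine; otherwise $\pi_0$ spreads beyond the star at $u$, and I would argue (via Lemma~\ref{displaypi} and the minimality of $\Pi$) that in that case one of the \emph{other} partitions of $\Pi$ can absorb the leftover splits after $\pi_0$ is shrunk, so the net count is unchanged. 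Once the size bound $|\Pi''| \le |\Pi|-1$ is secured, minimality of $\Pi$ forces $|\Pi'| = |\Pi''| + 1 \le |\Pi|$, so $\Pi'$ is also ${\mathbb P}({\mathcal T})$-minimum and contains $\pi_{\min}({\mathcal T})$, completing the proof.
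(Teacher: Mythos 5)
Your overall strategy --- exchange $\pi_{\min}({\mathcal T})$ into a minimum system without increasing its size, reducing everything to the claim that $\Sigma({\mathcal T})-\Sigma_{\pi_{\min}}$ admits a realization of size at most $|\Pi|-1$ --- has the right shape, and you have correctly located the crux: a partition witnessing a leaf-block may carry splits far from that leaf, so deleting the witnessing partitions and inserting $\pi_{\min}$ can apparently cost more than it saves. But your proposal stops exactly there. The assertion that ``one of the other partitions of $\Pi$ can absorb the leftover splits after $\pi_0$ is shrunk'' is not proved, and it is not even the right mechanism: different leaves may be witnessed by different members of $\Pi$, so the problem concerns a whole subfamily rather than a single $\pi_0$, and nothing in Lemma~\ref{displaypi} or the minimality of $\Pi$ by itself tells you that some other member of $\Pi$ can be coarsened to pick up the orphaned splits. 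As you note, your key claim is essentially a restatement of the lemma, so the proposal is a plan rather than a proof.

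The paper closes the gap with three ideas absent from your sketch. First, it takes $\Pi'\subseteq\Pi$ to be a \emph{minimum-sized covering subfamily}, i.e.\ a smallest subset of $\Pi$ such that every part of $\pi_{\min}$ lies in some member of $\Pi'$ (witnesses exist by Lemma~\ref{labelvertex}), minimized over all choices of minimum system $\Pi$. Second, it shows $\Pi'$ must be strongly compatible: if $\pi,\pi'\in\Pi'$ are not strongly compatible, then $\pi\cup\pi'$ is a hierarchy, so $\{\pi,\pi'\}$ can be replaced by a pair $\{\pi_1,\pi_2\}$ with the same split multiset in which $\pi_1$ refines $\pi_2$; the single partition $\pi_1$ then covers every leaf-block that $\pi$ or $\pi'$ covered, producing a strictly smaller covering subfamily inside another minimum system --- a contradiction. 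Third, once $\Pi'$ is strongly compatible, Corollary~\ref{two} applied to the pendant edges of $\Tt_{\Sigma_{\Pi'}}$ yields a realization $\Pi''$ of $\Sigma_{\Pi'}$ containing $\pi_{\min}$, and --- this is the decisive step --- Theorem~\ref{thm:tc:sc:bound} (strongly compatible systems are of \emph{maximum} size among realizations of their split multiset) gives $|\Pi''|\le|\Pi'|$, so $(\Pi-\Pi')\uplus\Pi''$ is still ${\mathbb P}({\mathcal T})$-minimum and contains $\pi_{\min}$. Without an appeal to the maximum-size theorem, or some equivalent counting device, your size-control step has no engine driving it.
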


\begin{proof}
For convenience, set $\pi_{\min}=\pi_{\min}({\mathcal T})$. If 
$A\in \pi_{\min}$, then, by Lemma~\ref{labelvertex}, each partition 
system in $\setpa(\mathcal T)$ contains a partition $\pi$ with $A\in \pi$. 
Suppose that $\Pi$ is a ${\mathbb P}({\mathcal T})$-minimum partition system. 
We may assume that $\pi_{\min}\not\in \Pi$. Let $\Pi'$ be a minimum-sized 
subset of $\Pi$ such that, for each $A\in \pi_{\min}$, there is a partition 
$\pi'$ in $\Pi'$ with $A\in \pi'$. Note that $\Sigma_{\Pi'}=\Sigma(\Tt)$
need not hold. Without loss of generality, we may assume 
that, $\Pi'$ is a minimum-sized partition system contained in a 
${\mathbb P}({\mathcal T})$-minimum partition system with this property. 
We break the proof into two cases depending upon whether or not $\Pi'$ is 
a strongly compatible partition system on $X$.

First suppose that $\Pi'$ is strongly compatible. Then,
$\Sigma_{\Pi'}$ is compatible and so Theorem~\ref{thm:tree:sc}
implies that $\Tt_{\Sigma_{\Pi'}}$ is even.  Let $F$ denote the subset of
edges of $\Tt_{\Sigma_{\Pi'}}$ that are incident with some leaf of 
$\Tt_{\Sigma_{\Pi'}}$. Then, for any two leaves $u$ and $v$ of 
$\Tt_{\Sigma_{\Pi'}}$, the path between $u$ and $v$ contains
either $0$ or precisely two edges in $F$. Hence, by Corollary~\ref{two},
 there exists a 
partition system $\Pi''$ in ${\mathbb P}(\Sigma_{\Pi'})$ and a partition
$\pi\in \Pi''$ that displays $F$. But now the definition of $\pi_{\min}$
implies that $\pi=\pi_{\min}$ and so $\pi_{\min}\in  \Pi''$.
Consider the partition system
\begin{align*}
\hat{\Pi}=(\Pi-\Pi')\uplus \Pi''.
\end{align*}
Since $\Sigma_{\Pi'}=\Sigma_{\Pi''}$, it follows that $\hat{\Pi}$ is 
in ${\mathbb P}({\mathcal T})$. As $\Pi'$ is strongly compatible, it 
follows by Theorem~\ref{thm:tc:sc:bound} that $|\Pi''|\le |\Pi'|$.
Since $\Pi'\subseteq \Pi$ and $\Pi$ is 
a ${\mathbb P}({\mathcal T})$-minimum partition 
system, it follows that $|\Pi|=|\hat{\Pi}|$ and so
$\hat{\Pi}$ is also a ${\mathbb P}({\mathcal T})$-minimum partition 
system. Observing that $\pi_{\min}\in \hat{\Pi}$ 
completes the proof of the case when 
$\Pi'$ is strongly compatible.

Now suppose that $\Pi'$ is not strongly compatible. Then 
there exist distinct partitions $\pi$ and $\pi'$ in $\Pi'$ that 
are not strongly compatible. This implies that $\pi\cup \pi'$ is a 
hierarchy. To see this, suppose that $\pi\cup \pi'$ is not a hierarchy. 
Then there exists $A\in \pi$ and $A'\in \pi'$ such that each of the sets 
$A\cap A'$, $A\cap (X-A')$, $(X-A)\cap A'$ is non-empty. Furthermore, 
$(X-A)\cap (X-A')$ is also non-empty as $A\cup A'\neq X$. But 
$\pi, \pi'\in \Pi$ and $\Pi$ is compatible, so at least one of these 
intersections is empty; a contradiction.

Since $\pi\cup \pi'$ is a hierarchy, it follows that, for each $A\in \pi$, 
either $A$ is a subset of a part in $\pi'$ or $A$ is the disjoint union of 
parts in $\pi'$. Similarly, for each $A'\in \pi'$, either $A'$ is a subset 
of a part in $\pi$ or $A'$ is the disjoint union of parts in $\pi$. It now 
follows that there is a partition system $\{\pi_1, \pi_2\}$ on $X$ such 
that $\Sigma_{\{\pi_1, \pi_2\}}=\Sigma_{\{\pi, \pi'\}}$ and, for all 
$B\in \pi_1$, we have that $B$ is a subset of a part in $\pi_2$. Let
\begin{align*}
\Pi''=(\Pi'-\{\pi, \pi'\})\uplus \{\pi_1\}.
\end{align*}

Clearly, $|\Pi''|=|\Pi'|-1$. Furthermore, for each $A\in \pi_{\min}$, 
there exists, by assumption, a partition in $\Pi'$ containing $A$, 
and so, for each 
$A\in \pi_{\min}$, there is a partition in $\Pi''$ containing $A$. 
Now consider the partition system
\begin{align*}
\hat{\Pi}=(\Pi-\Pi')\uplus \Pi''\uplus \{\pi_2\}=(\Pi-\{\pi, \pi'\})
\uplus \{\pi_1, \pi_2\}.
\end{align*}
Since $\Sigma_{\Pi}=\Sigma_{\hat{\Pi}}$, it follows that $\hat{\Pi}$ 
is in ${\mathbb P}({\mathcal T})$. Therefore, as $\Pi$ is 
${\mathbb P}({\mathcal T})$-minimum, $\hat{\Pi}$ is 
${\mathbb P}({\mathcal T})$-minimum. But $|\Pi''| < |\Pi'|$ and 
$\Pi''$ is a subset of $\hat{\Pi}$ with the property that, for each 
$A\in \pi_{\min}$, there is a partition in $\Pi''$ containing $A$; 
a contradiction. This completes the proof of the case that $\Pi'$ is not 
strongly compatible.
\end{proof}

\begin{theorem}\label{theo:verify}
Let $\mathcal T$ be an even $X$-tree. Then the partition system 
$\Pi_{\min}(\mathcal T)$ returned
by \textsc{MinSizePartition} applied to $\mathcal T$ is a 
${\mathbb P}({\mathcal T})$-minimum
partition system.
\end{theorem}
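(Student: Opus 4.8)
The plan is to prove correctness of \textsc{MinSizePartition} by induction on the number of edges of $\Tt$, following exactly the recursive structure of the algorithm. The base case is when $\Tt$ has only two vertices (a single edge), where there is a unique partition system, namely $\{\pi_{\min}(\Tt)\}$, and the algorithm returns precisely this. For the inductive step there are two cases matching the two branches of the pseudo-code.

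\emph{Case 1: $\Tt$ has a labelled interior vertex $v$.} Here the algorithm forms the decomposition ${\mathcal D}(\Tt,v)=\{\Tt_1,\dots,\Tt_k\}$, recursively computes $\Pi_{\min}(\Tt_i)$ for each $i$, and returns their multiset union. Each $\Tt_i$ has strictly fewer edges than $\Tt$ (since $k\ge 2$), and each is an even $X$-tree by the second of the ``two observations'' preceding Lemma~\ref{lem:decomp}, so by induction $\Pi_{\min}(\Tt_i)$ is ${\mathbb P}(\Tt_i)$-minimum. By Lemma~\ref{lem:decomp}, $\Pi_{\min}(\Tt_1)\mcup\cdots\mcup\Pi_{\min}(\Tt_k)\in{\mathbb P}(\Tt)$, and moreover any $\Pi\in{\mathbb P}(\Tt)$ decomposes \emph{uniquely} as $\Pi=\Pi_1\mcup\cdots\mcup\Pi_k$ with $\Pi_i\in{\mathbb P}(\Tt_i)$, so $|\Pi|=\sum_i|\Pi_i|\ge\sum_i|\Pi_{\min}(\Tt_i)|$. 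Hence the returned system has minimum size.

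\emph{Case 2: $\Tt$ has no labelled interior vertex.} Now $\pi_{\min}=\pi_{\min}(\Tt)$ is defined, and one sets $\Sigma'=\Sigma(\Tt)-\Sigma_{\pi_{\min}}$. If $\Sigma'=\emptyset$ the algorithm returns $\{\pi_{\min}\}$, which is clearly of size $1$ and lies in ${\mathbb P}(\Tt)$, hence minimum. If $\Sigma'\neq\emptyset$, then $\Sigma'$ is compatible and the weak $X$-tree $\Tt'$ with $\Sigma(\Tt')=\Sigma'$ is even; here one checks $\Tt'$ has strictly fewer edges than $\Tt$ (this needs $\pi_{\min}$ to contribute at least one, hence $|\pi_{\min}|\ge 2$, edges to $\Sigma(\Tt)$ — true since $\Tt$ has at least two leaves). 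By induction $\Pi_{\min}(\Tt')$ is ${\mathbb P}(\Tt')$-minimum, and by Lemma~\ref{lem:union}(i), $\{\pi_{\min}\}\mcup\Pi_{\min}(\Tt')\in{\mathbb P}(\Tt)$. The crucial point is the lower bound: by Lemma~\ref{lem:min} there is a ${\mathbb P}(\Tt)$-minimum partition system $\Pi^*$ with $\pi_{\min}\in\Pi^*$; then $\Pi^*-\{\pi_{\min}\}\in{\mathbb P}(\Sigma(\Tt)-\Sigma_{\pi_{\min}})={\mathbb P}(\Tt')$ by Lemma~\ref{lem:union}(ii), so $|\Pi^*|-1\ge|\Pi_{\min}(\Tt')|$, giving $|\Pi^*|\ge 1+|\Pi_{\min}(\Tt')|=|\{\pi_{\min}\}\mcup\Pi_{\min}(\Tt')|$, and equality of minimum sizes follows.

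The main obstacle is the bookkeeping around termination and the decrease of the induction parameter: one must verify that in Case 2 the tree $\Tt'$ genuinely has fewer edges (which is why it matters that $\pi_{\min}$ always has at least two parts), and that the recursion in Case 1 is well-founded even though $\Tt_i$ may still have a labelled interior vertex. Everything else is a direct assembly of Lemma~\ref{lem:decomp}, Lemma~\ref{lem:union}, and Lemma~\ref{lem:min}; Lemma~\ref{lem:min} is doing the real work of guaranteeing that committing to $\pi_{\min}$ at the top level costs nothing, so the correctness proof itself is essentially a clean induction once that lemma is in hand.
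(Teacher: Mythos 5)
Your proof is correct and follows essentially the same route as the paper's: the same two-case analysis mirroring the algorithm, with Lemma~\ref{lem:decomp} handling the decomposition case and Lemma~\ref{lem:min} plus Lemma~\ref{lem:union} handling the $\pi_{\min}$ case; the only difference is that you induct on the number of edges where the paper inducts on the number of interior vertices (both decrease in every recursive call, so both work). One tiny blemish: your stated base case of a single-edge tree is vacuous, since such a tree is never an even $X$-tree; the genuine terminating case is the $\Sigma'=\emptyset$ branch, which you do handle correctly.
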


\begin{proof}
We prove the theorem by induction on the number $m$ of interior 
vertices of $\mathcal T$.
Since $\mathcal T$ is even, $m\ge 1$. If $m=1$, then the unique 
interior vertex is adjacent
to each leaf of $\mathcal T$. It now follows by Lemma~\ref{labelvertex} 
combined with the definition of $\pi_{\min}({\mathcal T})$ that
$\{\pi_{\min}({\mathcal T})\}$ is the unique partition system in 
${\mathbb P}({\mathcal T})$,
and so {\sc MinSizePartition} correctly returns 
$\{\pi_{\min}({\mathcal T})\}$.

Let $m\ge 2$ and assume that {\sc MinSizePartition} 
correctly returns a ${\mathbb
P}({\mathcal T'})$-minimum partition system whenever it is 
applied to an even $X$-tree
${\mathcal T}'$ with fewer than $m$ interior vertices. We 
distinguish two cases depending
upon whether or not $\mathcal T$ has a labelled interior vertex.

First suppose that $\mathcal T$ has a labelled interior vertex $v$. 
Without loss of
generality, we may assume that at the first 
iteration of {\sc MinSizePartition} applied to
$\mathcal T$, the algorithm constructs the decomposition
$$
{\mathcal D}({\mathcal T}, v)=
\{{\mathcal T}_1, {\mathcal T}_2, \ldots, {\mathcal T}_k\}
$$
of $\Tt$ with respect to $v$ where $k$ is the degree of $v$.
Thus, to complete the proof of this case, it suffices to show that
\begin{align*}
\hat{\Pi} = \Pi_{\min}({\mathcal T}_1)\uplus \Pi_{\min}({\mathcal T}_2)
\uplus \cdots \uplus
\Pi_{\min}({\mathcal T}_k)
\end{align*}
is ${\mathbb P}({\mathcal T})$-minimum, where, for all 
$i\in [k]$,
$\Pi_{\min}({\mathcal T}_i)$ is the partition system on $X$ returned 
by {\sc MinSizePartition} applied to the even $X$-tree ${\mathcal T}_i$.

Let $i\in [k]$. Then, as ${\mathcal T}_i$ has fewer interior vertices than
$\mathcal T$, it follows by the induction assumption that 
$\Pi_{\min}({\mathcal T}_i)$ is
${\mathbb P}({\mathcal T}_i)$-minimum. One consequence of this fact is that
$\Pi_{\min}({\mathcal T}_i)$ is a partition system in 
${\mathbb P}({\mathcal T}_i)$. Combined with the definition of 
$\Pi$, Lemma~\ref{lem:decomp} implies that 
$\hat{\Pi}$ is a partition system in ${\mathbb P}({\mathcal T})$.
Now if $\hat{\Pi}$ is not ${\mathbb P}({\mathcal T})$-minimum, then 
there exists a partition
system $\Pi\in {\mathbb P}({\mathcal T})$ such that 
$|\Pi| < |\hat{\Pi}|$. By Lemma~\ref{lem:decomp}, there is a 
partition $\{\Pi_1, \Pi_2, \ldots, \Pi_k\}$ of $\Pi$ such
that, for all $i\in [k]$, we have $\Pi_i\in {\mathbb P}({\mathcal T}_i)$. 
But $|\Pi| < |\hat{\Pi}|$, and so there exists some $j\in [k]$ 
such that $|\Pi_j| < |\Pi_{\min}({\mathcal T}_i)|$ for some $i\in [k]$; 
a contradiction. Thus
$\hat{\Pi}$ is ${\mathbb P}({\mathcal T})$-minimum, as required.

Now suppose that $\mathcal T$ has no labelled interior vertex, and set
$\pi_{\min}=\pi_{\min}({\mathcal T})$ and 
$\Sigma'=\Sigma({\mathcal T})-\Sigma_{\pi_{\min}}$.
Then if $\Sigma'\not=\emptyset$ the algorithm
constructs the weak $X$-tree ${\mathcal T}'$ for which 
$\Sigma({\mathcal T}')=\Sigma'$. Note that $\Tt'\cong \Tt/E$ where
$E$ is a set of edges of $\Tt$ that displays $\pi_{\min}$
and so, since $\mathcal T$ is an even $X$-tree, it follows 
by Lemma~\ref{tinyeven} that $\mathcal T'$ is in fact an even $X$-tree.
For this case, it now  suffices to show that
\begin{align*}
\hat{\Pi} = \{\pi_{\min}\}\uplus \Pi_{\min}({\mathcal T}')
\end{align*}
is ${\mathbb P}({\mathcal T})$-minimum, where $\Pi_{\min}({\mathcal T}')$ 
is the partition
system on $X$ returned by {\sc MinSizePartition} applied
 to ${\mathcal T}'$. 

Since ${\mathcal T}'$ has fewer interior vertices than $\mathcal T$, 
it follows by the
induction assumption that $\Pi_{\min}({\mathcal T}')$ is 
${\mathbb P}({\mathcal
T}')$-minimum. This immediately implies that 
$\Pi_{\min}({\mathcal T}')$ is a partition
system in ${\mathbb P}({\mathcal T}')$, and so 
$\hat{\Pi}\in {\mathbb P}({\mathcal T})$. Now,
by Lemma~\ref{lem:min}, there is a ${\mathbb P}({\mathcal T})$-minimum 
partition system $\Pi$
containing $\pi_{\min}$. Let $\Pi'=\Pi-\{\pi_{\min}\}$. By 
Lemma~\ref{lem:union}, $\Pi'\in{\mathbb P}({\mathcal T}')$, and 
so $|\Pi_{\min}({\mathcal T}')|\le |\Pi'|$. Hence
\begin{align*}
|\hat{\Pi}| = |\Pi_{\min}({\mathcal T}')| + 1 \le |\Pi'| + 1 = |\Pi|.
\end{align*}
Thus, as $\Pi$ is ${\mathbb P}({\mathcal T})$-minimum, we deduce that 
$|\Pi|=|\hat{\Pi}|$ and so $\hat{\Pi}$ must also be
${\mathbb P}({\mathcal T})$-minimum, as required. 
This completes the proof of the second case and the
theorem.
\end{proof}

\section{Hierarchical Partition Systems}
\label{hsection}

In the previous section we showed how to construct, 
for an even $X$-tree $\Tt$, a ${\mathbb P}(\Tt)$-minimum 
partition system $\Pi$ on $X$. It appears to 
be a difficult problem to characterize the set 
of ${\mathbb P}(\Tt)$-minimum partition systems 
for arbitrary $\Tt$. However, in this section
we shall show that in case $\Tt$ contains
a vertex $\rho$ that has the same distance in  
$\Tt$ to every leaf, then we can characterize 
the $\setpa(\Tt)$-minimum partition systems 
(Theorem~\ref{hmin}). 
Note that such trees are sometimes called {\em equidistant} trees 
\cite[p.150]{SS03}.

The first result in this section shows that hierarchical partition 
systems are compatible. 

\begin{proposition}
Let $\Pi$ be a hierarchical partition system on $X$. Then 
$\Pi$ is compatible. Moreover, $\mathcal T_{\Pi}$ is isomorphic to 
$\mathcal T^-_{\rho}$, where $\mathcal T_{\rho}$ is the rooted weak 
$X$-tree with $\mathcal H(\mathcal T_{\rho})=\mcup_{\pi\in \Pi} \pi$.
\label{hcompatible}
\end{proposition}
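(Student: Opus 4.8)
The plan is to reduce the statement to Theorem~\ref{hbuneman} by showing that the multiset $\mathcal H=\mcup_{\pi\in\Pi}\pi$ is a hierarchy of proper clusters of $X$ whose union is $X$, and then to identify $\mathcal T_\Pi$ with $\mathcal T^-_\rho$ by comparing their split systems. First I would observe that every part $A$ of every partition $\pi\in\Pi$ is a non-empty \emph{proper} subset of $X$: it is non-empty by the definition of a partition, and it is proper because $\pi$ has at least two parts, so $X-A$ contains another (non-empty) part. The union $\bigcup\mathcal H$ is clearly all of $X$ since the parts of any single $\pi\in\Pi$ already cover $X$. That $\mathcal H$ is a hierarchy is exactly the hypothesis that $\Pi$ is hierarchical (the set $\bigcup_{\pi\in\Pi}\pi$ is a hierarchy), together with the observation that passing to the multiset version changes nothing about the pairwise-intersection condition $A\cap B\in\{\emptyset,A,B\}$. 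Hence Theorem~\ref{hbuneman} applies and yields a (unique up to isomorphism) rooted weak $X$-tree $\mathcal T_\rho$ with $\mathcal H(\mathcal T_\rho)=\mcup_{\pi\in\Pi}\pi$.

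Next I would pass to the unrooted picture. Let $\mathcal T^-_\rho$ be the weak $X$-tree obtained from $\mathcal T_\rho$ by demoting $\rho$ to an ordinary interior vertex. The key computation is that, for each edge $e$ of $\mathcal T_\rho$, the cluster $C_e$ and its complement $X-C_e$ form precisely the $X$-split $\sigma_e$ displayed by $e$ in $\mathcal T^-_\rho$; this is immediate from the definitions, since the elements of $X$ on the root side of $e$ are exactly those not in $C_e$. Consequently
$$
\Sigma(\mathcal T^-_\rho)=\mcup_{e\in E(\mathcal T_\rho)}\{C_e\mid (X-C_e)\}
=\mcup_{A\in\mathcal H}\{A\mid(X-A)\}=\mcup_{\pi\in\Pi}\Sigma_\pi=\Sigma_\Pi .
$$
In particular $\Sigma_\Pi$ is the split system of a weak $X$-tree, so by Theorem~\ref{buneman} it is compatible; hence $\Pi$ is compatible by definition, and $\mathcal T_\Pi$ is (by definition, and by the uniqueness clause of Theorem~\ref{buneman}) the unique weak $X$-tree with $\Sigma(\mathcal T_\Pi)=\Sigma_\Pi$. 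Since $\mathcal T^-_\rho$ also has this property, uniqueness gives $\mathcal T_\Pi\cong\mathcal T^-_\rho$, which is the second assertion.

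The only genuine subtlety — and the step I would be most careful about — is the bookkeeping of \emph{multiplicities} in the multiset identity $\Sigma(\mathcal T^-_\rho)=\Sigma_\Pi$. Distinct edges of $\mathcal T_\rho$ may display the same cluster, distinct partitions in $\Pi$ may share a part, and Theorem~\ref{hbuneman} only guarantees a rooted tree realising $\mathcal H$ \emph{as a multiset}; so one must check that the map $e\mapsto C_e$ from $E(\mathcal T_\rho)$ to $\mathcal H$ is a bijection of multisets (which is exactly what $\mathcal H(\mathcal T_\rho)=\mathcal H$ asserts), and then that taking complements $A\mapsto\{A,X-A\}$ is injective on clusters, so no collapsing of multiplicities occurs in the second equality above. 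Everything else is a routine unwinding of the definitions of $\mathcal H(\mathcal T_\rho)$, $\Sigma(\mathcal T)$, $\Sigma_\pi$ and $\Sigma_\Pi$, and an appeal to the already-established Theorems~\ref{buneman} and~\ref{hbuneman}.
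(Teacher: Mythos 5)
Your proof is correct and follows essentially the same route as the paper: apply Theorem~\ref{hbuneman} to $\mcup_{\pi\in\Pi}\pi$ to obtain $\mathcal T_\rho$, observe that $\Sigma(\mathcal T^-_\rho)=\Sigma_\Pi$, and conclude compatibility and the isomorphism from the uniqueness in Theorem~\ref{buneman}. You merely spell out the routine verifications (properness of the clusters, the multiset bookkeeping) that the paper leaves implicit.
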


\begin{proof}
By Theorem~\ref{hbuneman}, there is a unique rooted weak $X$-tree, 
${\mathcal T}_{\rho}$ say, with 
$\mathcal H(\mathcal T_{\rho})=\mcup_{\pi\in \Pi} \pi$. This 
implies that $\Sigma(\mathcal T^-_{\rho})=\Sigma_{\Pi}$. In particular, 
$\Pi$ is compatible and $\mathcal T_{\Pi}$ is isomorphic to 
$\mathcal T^-_{\rho}$.
\end{proof}

The next result gives some properties of 
$\mathcal T_{\Pi}$ in case $\Pi$ is a 
hierarchical partition system.

\begin{corollary}
Let $\Pi$ be a hierarchical partition system on $X$.
\begin{itemize}
\item[{\rm (i)}] If $u$ is an interior vertex of ${\mathcal T}_{\Pi}$, 
then $u$ is unlabelled.

\item[{\rm (ii)}] There is a vertex $\rho$ of ${\mathcal T}_{\Pi}$ such 
that, for all leaves $u$ and $v$,
$$d_{{\mathcal T}_{\Pi}}(\rho, u)=d_{{\mathcal T}_{\Pi}}(\rho, v)=|\Pi|.$$
\end{itemize}
\label{equal}
\end{corollary}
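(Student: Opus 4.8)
\textbf{Proof plan for Corollary~\ref{equal}.}
The plan is to deduce both parts from Proposition~\ref{hcompatible}, which tells us that $\mathcal T_{\Pi}\cong\mathcal T^-_{\rho}$, where $\mathcal T_{\rho}=(T_\rho;\phi)$ is the rooted weak $X$-tree with $\mathcal H(\mathcal T_{\rho})=\mcup_{\pi\in\Pi}\pi$. So I will work inside $\mathcal T_\rho$ and transfer the conclusions across the isomorphism. For part (i), I would argue that no interior vertex of $\mathcal T^-_\rho$ is labelled. The root $\rho$ is unlabelled by definition of a rooted weak $X$-tree (since $\phi$ maps into $V-\{\rho\}$), so I only need to handle interior vertices $u\neq\rho$. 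Here the key point is that every part appearing in some $\pi\in\Pi$ is a \emph{proper} cluster, hence lies in the hierarchy $\mathcal H(\mathcal T_\rho)$, and each such cluster corresponds to an edge of $\mathcal T_\rho$. Suppose for contradiction that an interior vertex $u\neq\rho$ is labelled, say $x\in\phi^{-1}(u)$. Let $e$ be the edge on the path from $u$ to $\rho$ incident with $u$, and let $f$ be an edge incident with $u$ on the ``other side'' (pointing away from $\rho$), which exists since $u$ is interior and thus has degree $\geq 3$ in $\mathcal T^-_\rho$, or is an interior degree-$2$ vertex — in a rooted weak $X$-tree interior non-root vertices have at least one child, so $f$ exists. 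Then $x\in C_e$ but $x\notin C_f$ while $C_f\subsetneq C_e$; since both $C_e,C_f$ must occur as parts of partitions in $\Pi$ and a labelled interior vertex forces one of these to fail being a part of a genuine partition of $X$ — more precisely, by Lemma~\ref{labelvertex} applied to $\mathcal T_\Pi$ and the vertex $u$ with $\phi^{-1}(u)\neq\emptyset$, some part $B$ of some $\pi\in\Pi$ satisfies $\phi^{-1}(u)\subseteq B$; but the hierarchy structure then forces $B$ to contain the disjoint union of the child-clusters at $u$, contradicting that $B$ is a single part of a partition whose other parts are exactly those child-clusters (or subsets thereof). I would clean this up by directly using that if $u$ is labelled and interior, then in $\mathcal H(\mathcal T_\rho)$ the cluster ``below $u$'' strictly contains the union of its children's clusters but the label $x$ sits at $u$, so no part of any $\pi\in\Pi$ can simultaneously be a part and contain $x$ without overlapping improperly.

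For part (ii), once part (i) is established — so all interior vertices of $\mathcal T_\Pi$ are unlabelled, equivalently all labelled vertices are leaves — I take $\rho$ to be (the image under the isomorphism of) the root of $\mathcal T_\rho$. The claim $d_{\mathcal T_\Pi}(\rho,u)=|\Pi|$ for every leaf $u$ I would prove by the following counting argument: for a leaf $u$ with $\phi(x)=u$, the distance $d_{\mathcal T_\Pi}(\rho,u)$ equals the number of edges on the $\rho$--$u$ path, which equals the number of clusters in $\mathcal H(\mathcal T_\rho)$ that contain $x$ (since $C_e$ contains $x$ iff $e$ lies on the path from $\phi(x)$ to $\rho$). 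Now $\mathcal H(\mathcal T_\rho)=\mcup_{\pi\in\Pi}\pi$, so this count equals $\sum_{\pi\in\Pi}|\{A\in\pi: x\in A\}|$ (counted with multiplicity). Since $\pi$ is a partition of $X$, exactly one part of $\pi$ contains $x$, so each $\pi\in\Pi$ contributes exactly $1$, giving a total of $|\Pi|$, independent of $x$. Hence $d_{\mathcal T_\Pi}(\rho,u)=|\Pi|$ for every leaf $u$.

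\textbf{Main obstacle.} The genuinely delicate step is part (i): ruling out labelled interior vertices cleanly. The subtlety is that being a hierarchy only constrains the family $\mcup_{\pi\in\Pi}\pi$ of \emph{parts}, and one must leverage that these parts are organized into genuine partitions of $X$ (not just an arbitrary subfamily of a hierarchy) to force the labelling to sit at leaves. I expect the cleanest route is to invoke Lemma~\ref{labelvertex} to get a part $B\in\pi$ with $\phi^{-1}(u)\subseteq B$, then use that $\Pi$ hierarchical implies (as noted in Section~\ref{notation}) that $B$ is either contained in a part of $\pi(u)$-type structure or is a disjoint union of such, and derive a contradiction with $\pi$ being a partition. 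The rest of the corollary is then a short, essentially bookkeeping, consequence.
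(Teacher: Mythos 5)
Your part (ii) is essentially the paper's own argument: identify $\rho$ with the root of the rooted weak $X$-tree $\mathcal T_{\rho}$ realizing $\mathcal H(\mathcal T_{\rho})=\mcup_{\pi\in\Pi}\pi$ via Proposition~\ref{hcompatible}, note that the edges on the $\rho$--$u$ path are exactly those whose clusters contain $\phi^{-1}(u)$, and count (with multiplicity) exactly one such cluster per partition in $\Pi$. That part is correct and needs no change.

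Part (i) is where the gap lies, and you flag it yourself but do not close it. The two formulations you offer as the contradiction --- that a labelled interior vertex ``forces one of these to fail being a part of a genuine partition of $X$'', and that ``no part of any $\pi\in\Pi$ can simultaneously be a part and contain $x$ without overlapping improperly'' --- are not true as stated: every cluster $C_{e'}$ with $e'$ on the $\rho$--$u$ path is a legitimate part containing $x$, and $C_e$, $C_f$ can each individually be parts of genuine partitions. The contradiction must come from an explicit comparison of two specific sets. The paper's route is short and needs no rooted machinery at all: apply Lemma~\ref{labelvertex} to two distinct edges $e_1,e_2$ incident with $u$ to obtain parts $A_1,A_2$ of partitions in $\Pi$, where $A_i$ is the side of $\sigma_{e_i}$ containing $A=\phi^{-1}(u)$. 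Then $X-A_2\subseteq A_1$ and $X-A_1\subseteq A_2$ with both complements non-empty, so $A_1\not\subseteq A_2$ and $A_2\not\subseteq A_1$, while $A_1\cap A_2\supseteq A\neq\emptyset$; this contradicts $\bigcup_{\pi\in\Pi}\pi$ being a hierarchy. Your rooted version can also be completed, but differently from how you sketch it: take a child edge $f$ of $u$ and the partition $\pi\in\Pi$ with $C_f\in\pi$; the part $D$ of $\pi$ containing $x$ is a cluster of $\mathcal T_{\rho}$ containing $x$, hence $D=C_{e'}$ for some $e'$ on the $\rho$--$u$ path, hence $C_f\subseteq D$, contradicting the disjointness of the distinct parts $C_f$ and $D$ of $\pi$. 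Either way, a concrete two-set argument of this kind is the missing step; as written, the proposal asserts the contradiction rather than deriving it.
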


\begin{proof}
To prove (i), let $\mathcal T_{\Pi}=(T_{\Pi};\phi)$
and suppose that there is a labelled, interior vertex $u$ of 
${\mathcal T}_{\Pi}$. Let $A=\phi^{-1}(u)$. By Lemma~\ref{labelvertex}, 
for each edge incident with $u$, there is a distinct partition in $\Pi$ 
with a part that properly contains $A$. Since $u$ is an interior vertex, 
it has degree at least two, so there are at least two such partitions, 
$\pi_1$ and $\pi_2$ say. Let $A_1$ and $A_2$ be the parts of $\pi_1$ and 
$\pi_2$, respectively, that properly contain $A$. It is easily seen that 
neither $A_1\subseteq A_2$ nor $A_2\subseteq A_1$. But then, as 
$A\subseteq A_1\cap A_2$ and $A$ is non-empty, it follows that $\Pi$ 
is not hierarchical; a contradiction. This completes the proof of (i).

For the proof of (ii), let ${\mathcal T}_{\rho}=(T_{\rho}; \phi)$ 
be the rooted weak $X$-tree with root $\rho$ for which 
$\mathcal H(\mathcal T_{\rho})=\mcup_{\pi\in \Pi} \pi$. Let $u$ be a leaf of 
${\mathcal T}_{\rho}$. Now, the clusters displayed by the edges on the 
path from $\rho$ to $u$ are precisely the sets in $\mcup_{\pi\in \Pi} \pi$ 
containing $\phi^{-1}(u)$. Since each partition in $\Pi$ contains 
exactly one such set as a part, it follows that 
$d_{{\mathcal T}_{\rho}}(\rho, u)=|\Pi|$. By Proposition~\ref{hcompatible}, 
this in turn implies that
$$d_{{\mathcal T}_{\Pi}}(\rho, u)=d_{{\mathcal T}_{\Pi}}(\rho, v)$$
for all leaves $u$ and $v$ of ${\mathcal T}_{\Pi}$, thereby completing 
the proof of (ii).
\end{proof}

We now characterize the compatible split systems $\Sigma$
for which there exists some hierarchical partition system $\Pi$ 
with $\Sigma_{\Pi}=\Sigma$.

\begin{theorem}\label{rooted-equivalence}
Let $\Sigma$ be a compatible split system on $X$. Then there exists a 
hierarchical partition system $\Pi\in \setpa(\Sigma)$ 
if and only if ${\mathcal T}_{\Sigma}$ has a vertex $\rho$ such that, 
for all labelled vertices $u$ and $v$ of $\Tt_{\Sigma}$,
$$d_{{\mathcal T}_{\Sigma}}(\rho, u)=d_{{\mathcal T}_{\Sigma}}(\rho, v).$$
\label{hequal}
\end{theorem}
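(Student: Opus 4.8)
The plan is to prove the two directions separately, relying on the rooted analogue of Buneman's theorem (Theorem~\ref{hbuneman}) to translate between rooted weak $X$-trees and hierarchies.

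For the forward direction, suppose $\Pi \in \setpa(\Sigma)$ is hierarchical. By Proposition~\ref{hcompatible}, $\mathcal T_{\Pi}$ is isomorphic to $\mathcal T^-_\rho$, where $\mathcal T_\rho$ is the rooted weak $X$-tree whose cluster system is $\bigcup_{\pi\in\Pi}\pi$. Since $\Sigma_\Pi = \Sigma$, we have $\mathcal T_\Sigma \cong \mathcal T_\Pi \cong \mathcal T^-_\rho$, so we may locate the image $\rho$ of the root inside $\mathcal T_\Sigma$. Corollary~\ref{equal}(ii) then gives exactly the required equidistance property: every leaf is at distance $|\Pi|$ from $\rho$. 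I would then observe that Corollary~\ref{equal}(i) shows all interior vertices of $\mathcal T_\Sigma$ are unlabelled, so the labelled vertices are precisely the leaves, and the equidistance condition for labelled vertices follows. This direction is essentially a bookkeeping exercise assembling results already proved.

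For the converse, suppose $\mathcal T_\Sigma$ has a vertex $\rho$ equidistant (say at distance $d$) from all labelled vertices. The plan is to build a hierarchical partition system explicitly. View $\mathcal T_\Sigma$ as a rooted weak $X$-tree $\mathcal T_\rho$ by designating $\rho$ as root; first I must check $\rho$ has degree at least two (if it were a leaf it would be labelled, contradicting $d \geq 1$ unless $|X| = 1$, which is excluded; a short argument handles the degree-one interior case by noting such a vertex cannot exist in $\mathcal T_\Sigma$ since it would force two identical splits or be suppressible — actually $\mathcal T_\Sigma$ can have such vertices, so I would instead argue that if $\rho$ has degree one it is adjacent to a single vertex and we can still proceed, or simply note the equidistance forces $\deg(\rho)\geq 2$ when $|X|\geq 2$). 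Then $\mathcal H(\mathcal T_\rho)$ is a hierarchy by the remark preceding Theorem~\ref{hbuneman}. For each level $i \in \{1, \dots, d\}$, let $\pi_i$ be the collection of clusters $C_e$ where $e$ ranges over edges at depth $i$ from the root (i.e. edges whose removal separates $\rho$ from a set of leaves all at distance $d - i$ from the far endpoint). I would verify each $\pi_i$ is a genuine partition of $X$: every leaf lies below exactly one depth-$i$ edge because, by equidistance, the path from $\rho$ to any leaf has length exactly $d$ and hence crosses exactly one edge at each depth. Setting $\Pi = \{\pi_1, \dots, \pi_d\}$, the parts of the $\pi_i$ are exactly the clusters $C_e$ for all edges $e$, so $\bigcup_i \pi_i = \mathcal H(\mathcal T_\rho)$ is a hierarchy, meaning $\Pi$ is hierarchical. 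Finally, $\Sigma_\Pi = \bigcup_e \{C_e \mid (X - C_e)\} = \Sigma(\mathcal T^-_\rho) = \Sigma(\mathcal T_\Sigma) = \Sigma$, so $\Pi \in \setpa(\Sigma)$.

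The main obstacle I anticipate is the "depth" bookkeeping in the converse: making precise what "an edge at depth $i$" means when $\mathcal T_\Sigma$ may have unlabelled degree-two vertices (so that edges on a root-to-leaf path are not in bijection with a clean notion of level unless one counts edges, not vertices), and confirming that the equidistance hypothesis genuinely forces each $\pi_i$ to be a partition rather than merely a partial partition. The cleanest formulation is: for an edge $e$ with endpoints $a$ (closer to $\rho$) and $b$, define its depth as $d_{\mathcal T_\Sigma}(\rho, a) + 1 = d_{\mathcal T_\Sigma}(\rho, b)$; then $C_e$ is the set of leaves on $b$'s side, and a leaf $x$ with $d_{\mathcal T_\Sigma}(\rho, \phi(x)) = d$ lies in $C_e$ for exactly one edge $e$ of each depth $i \leq d$ — namely the $i$-th edge along the path from $\rho$ to $\phi(x)$. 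This makes the partition property immediate and completes the argument.
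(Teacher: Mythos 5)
Your proposal is correct and follows essentially the same route as the paper: the forward direction is read off from Corollary~\ref{equal}, and the converse builds the hierarchical system by grouping the clusters $C_e$ according to the distance from $\rho$ to the far endpoint of $e$ (the paper's sets $E_i$ are exactly your ``depth $i$'' edges), with the equidistance hypothesis guaranteeing each level is a genuine partition. The degree-of-$\rho$ worry you raise is only a degenerate edge case that the paper also passes over silently.
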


\begin{proof}
If there exists a hierarchical partition system $\Pi\in \setpa(\Sigma)$
then it follows by Corollary~\ref{equal} 
that ${\mathcal T}_{\Sigma}$ has a vertex $\rho$ such that, for all 
labelled vertices $u$ and $v$ of ${\mathcal T}_{\Sigma}$, we have 
$$
d_{{\mathcal T}_{\Sigma}}(\rho, u)=d_{{\mathcal T}_{\Sigma}}(\rho, v).
$$

To prove the converse, suppose that ${\mathcal T}_{\Sigma}$ has such a 
vertex $\rho$. Then no interior vertex of ${\mathcal T}_{\Sigma}$ is 
labelled. Let $d$ denote the distance from $\rho$ to a leaf of 
${\mathcal T}_{\Sigma}$. For each $i\in \{1,\dots,d\}$, let $E_i$ 
denote the subset of edges whose end vertex furthest from $\rho$ is 
distance $i$. Note that $\{E_1, E_2, \ldots, E_d\}$ is a partition of 
$E(\mathcal T_{\Sigma})$. Viewing ${\mathcal T}_{\Sigma}$ as a rooted 
weak $X$-tree with root $\rho$, let
$$\pi_i=\{C_e: e\in E_i\}$$
for each $i$. Since the leaves of ${\mathcal T}_{\Sigma}$ all have the
same distance to $\rho$, it follows that $\pi_i$ 
is a partition of $X$ for all $i$. In particular,
$$\Pi_h=\mcup_{i\in \{1,\dots,d\}} \pi_i$$
is a partition system on $X$ with $\Sigma_{\Pi_h}=\Sigma$. To see that 
$\Pi_h$ is hierarchical, let $A_i\in \pi_i$ and $A_j\in \pi_j$, where 
$\pi_i, \pi_j\in \Pi_h$. If $i=j$, then either $A_i\cap A_j=\emptyset$ or 
$A_i=A_j$. Thus we may assume that $i\neq j$. Without loss of generality, 
we may further assume that $i < j$. But then, again viewing 
${\mathcal T}_{\Sigma}$ as a rooted weak $X$-tree with root $\rho$, it 
is easily seen that either $A_i\cap A_j=\emptyset$ or $A_i\cap A_j=A_j$
as $A_i=C_{e_i}$ and $A_j=C_{e_j}$ for some $e_i\in E_i$ and some
$e_j\in E_j$ and $\mathcal H(\mathcal T_{\Sigma})=\mcup_{\pi\in \Pi_h} \pi$.
Consequently, $\Pi_h$ is hierarchical. This completes the proof of the 
converse, and thereby the proof of the theorem.
\end{proof}


We conclude this section by characterizing, for a 
compatible split system $\Sigma$  for which 
$\setpa(\Sigma)$ contains a hierarchical partition system,
the $\setpa(\Tc_{\Sigma})$-minimum partition systems.

\begin{theorem}
Let $\Sigma$ be a compatible split system on $X$ such that $\setpa(\Sigma)$ 
contains a hierarchical partition system, and let 
$\Pi\in \setpa(\Sigma)$. 
Then $\Pi$ is hierarchical if and only if $\Pi$ is 
$\setpa({\mathcal T}_{\Sigma})$-minimum.
\label{hmin}
\end{theorem}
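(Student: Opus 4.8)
The plan is to prove the two directions separately, using the equidistance structure established by Theorem~\ref{hequal} and the size lower bound from Corollary~\ref{lem:diam:bound}. Since $\setpa(\Sigma)$ contains a hierarchical partition system, Theorem~\ref{hequal} tells us that $\Tt_\Sigma$ has a vertex $\rho$ equidistant (distance $d$, say) from all leaves, and hence no interior vertex of $\Tt_\Sigma$ is labelled; in particular $\Delta(\Tt_\Sigma)=2d$. Combining this with Corollary~\ref{lem:diam:bound} gives $|\Pi|\ge d$ for every $\Pi\in\setpa(\Sigma)$. On the other hand, the hierarchical partition system $\Pi_h$ built in the proof of Theorem~\ref{hequal} — stratifying the edges of $\Tt_\Sigma$ by their distance-$i$ endpoint into partitions $\pi_1,\dots,\pi_d$ — has size exactly $d$. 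So any hierarchical $\Pi\in\setpa(\Sigma)$ obtained this way is $\setpa(\Tt_\Sigma)$-minimum; but I still need to argue that \emph{every} hierarchical partition system in $\setpa(\Sigma)$ has size $d$, which follows from Corollary~\ref{equal}(ii): for a hierarchical $\Pi$, $d_{\Tt_\Pi}(\rho,u)=|\Pi|$ for each leaf $u$, and since $\Tt_\Pi\cong\Tt_\Sigma$ this forces $|\Pi|=d$. That settles the ``only if'' direction.

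For the ``if'' direction I must show that any $\setpa(\Tt_\Sigma)$-minimum $\Pi$ is hierarchical. By the paragraph above, $|\Pi|=d$ since the minimum size is $d$ (realized by $\Pi_h$) and this matches the lower bound. The key structural claim is: because $\Delta(\Tt_\Sigma)=2d$ and $|\Pi|=d$, Lemma~\ref{displaypi}(ii) is forced to be tight — for each leaf $\ell$ and each $\pi\in\Pi$, the two "diametral" leaves $\ell,\ell'$ (both at distance $d$ from $\rho$, with the path between them of length $2d$ passing through $\rho$) have their connecting path meeting exactly two edges of each displaying set $E_\pi$, and summing over the $d$ partitions accounts for all $2d$ edges on that path. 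This means each displaying set $E_\pi$ contributes exactly one edge "on each side of $\rho$" along every root-to-leaf path, which should let me conclude that the parts appearing across all partitions in $\Pi$ are exactly the clusters $C_e$ in a rooted interpretation of $\Tt_\Sigma$ — i.e. $\bigcup_{\pi\in\Pi}\pi = \Hh(\Tt_\Sigma)$ rooted at $\rho$, a hierarchy.

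The main obstacle will be making that last step rigorous: showing that the minimality/size constraint forces each $E_\pi$ to be an "antichain layer" at a fixed depth from $\rho$, so that the part of $\pi$ containing a given leaf $\ell$ is precisely the cluster $C_e$ for the unique edge $e\in E_\pi$ on the $\rho$–$\ell$ path. Concretely I would fix a leaf $\ell$, order the $d$ edges $e_1,\dots,e_d$ of $E_\pi\cap(\text{path }\rho\text{ to }\ell)$ — wait, each $E_\pi$ meets this path in how many edges? From Lemma~\ref{displaypi}(ii) applied to $\ell$ and the antipodal leaf $\ell'$ through $\rho$, each $E_\pi$ contributes $0$ or $2$ edges to the length-$2d$ path $\ell\!-\!\rho\!-\!\ell'$; summing over $\pi\in\Pi$ gives at most $2d$, with equality, so \emph{every} $\pi$ contributes exactly $2$ — one strictly between $\ell$ and $\rho$ (or at $\rho$) and one strictly between $\rho$ and $\ell'$. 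Doing this for all antipodal pairs pins down that $E_\pi$ hits each root-to-leaf path in exactly one edge, and a counting argument over the $d$ partitions shows these single edges occur at $d$ distinct depths $1,\dots,d$; hence the part of $\pi$ at leaf $\ell$ is the cluster at that depth, and $\bigcup_{\pi\in\Pi}\pi$ is the full rooted hierarchy $\Hh(\Tt_\Sigma)$, so $\Pi$ is hierarchical. I would then note this also re-proves uniqueness: the minimum partition system under these hypotheses is exactly $\Pi_h$.
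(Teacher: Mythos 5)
Your ``only if'' direction is correct and is essentially the paper's own argument: Corollary~\ref{equal}(ii) gives $2|\Pi|=\Delta(\Tt_\Sigma)$ for a hierarchical $\Pi$, Corollary~\ref{lem:diam:bound} gives $\Delta(\Tt_\Sigma)\le 2|\Pi'|$ for every $\Pi'\in\setpa(\Sigma)$, and together these show every hierarchical system attains the lower bound $d=\Delta(\Tt_\Sigma)/2$. For the ``if'' direction you depart from the paper, which argues the contrapositive: it picks a partition $\pi_2\in\Pi$ with a part that is not a cluster of the rooted tree and shows that contracting a displaying set $E_{\pi_2}$ does not change the diameter, whence $2|\Pi_h|=\Delta(\Tt_\Sigma)=\Delta(\Tt_\Sigma/E_{\pi_2})\le 2(|\Pi|-1)<2|\Pi|$. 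Your direct ``tightness'' route is appealing, but it has a real gap at the step you yourself flagged.

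Concretely: the equality $\sum_{\pi\in\Pi}|E_\pi\cap P|=2d$ along a diametral path $P$ presupposes that the displaying sets can be chosen pairwise disjoint so that they partition $E(\Tt_\Sigma)$ --- true, but unstated, and the $E_\pi$ are far from unique. More seriously, the count only yields $|E_\pi\cap P|=2$; it does not place the two edges on opposite sides of $\rho$. If $\deg(\rho)\ge 3$ you can force this by counting over three leaves in three distinct branches ($a+b=a+c=b+c=2$ gives $a=b=c=1$), but if $\deg(\rho)=2$ you only get $a+b=2$, and $a=2$, $b=0$ genuinely occurs: for the path $\ell_1-b-a-\rho-c-e-\ell_2$ with $X=\{1,2\}$, the minimum system is three copies of $1|2$, and $E_\pi=\{\ell_1b,\,ba\}$ is a legitimate displaying set with both edges on one side of $\rho$. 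Then your conclusion ``the part of $\pi$ at leaf $\ell$ is the cluster $C_e$ for the unique edge $e\in E_\pi$ on the $\rho$--$\ell$ path'' fails as stated (the part containing the labels of the other branch is the label set of the component of $\rho$, not of the form $C_e$ with $e\in E_\pi$), so the degree-two root case needs its own argument --- either by showing a good choice of displaying sets exists, or by checking directly that every part is still a cluster. Finally, your closing remark that this ``re-proves uniqueness'' of the minimum system is false: for the equidistant tree on $X=\{1,2,3,4\}$ with cherries $\{1,2\}$ and $\{3,4\}$, both $\{12|34,\,1|2|3|4\}$ and $\{1|2|34,\,12|3|4\}$ are hierarchical and of minimum size $2$.
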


\begin{proof}
Note that since $\setpa(\Sigma)$ contains a hierarchical partition
 system, it follows 
by Theorem~\ref{hequal} that $\mathcal T_{\Sigma}$ has a vertex $\rho$ such 
that, for all leaves $u$ and $v$ in $\mathcal T_{\Sigma}$,
$$
d_{{\mathcal T}_{\Sigma}}(\rho, u)=d_{{\mathcal T}_{\Sigma}}(\rho, v).
$$

First suppose that $\Pi$ is hierarchical. Then, since 
$\Pi\in \mathbb P(\Sigma)$ and so $\Tt_{\Sigma}\cong \Tt_{\Pi}$, 
Corollary~\ref{equal}(ii) implies
$$
\Delta({\mathcal T}_{\Sigma})=2d_{{\mathcal T}_{\Sigma}}(\rho, u)=2|\Pi|,
$$
where $u$ is a leaf of ${\mathcal T}_{\Sigma}$. But, by 
Corollary~\ref{lem:diam:bound},
$$\Delta({\mathcal T}_{\Sigma})\le 2|\Pi'|$$
for all partition systems $\Pi'\in\setpa(\Sigma)$. Thus $|\Pi|\le |\Pi'|$ 
for all partition systems $\Pi'\in \setpa(\Sigma)$ and so $\Pi$
is $\setpa({\mathcal T}_{\Sigma})$-minimum.

We prove the converse by establishing that if $\Pi$ is not hierarchical
then $\Pi$ is not $\setpa({\mathcal T}_{\Sigma})$-minimum.
Suppose that $\Pi$ is not hierarchical. Then there 
exist distinct $\pi_1, \pi_2\in \Pi$ with $A_1\in \pi_1$ and $A_2\in \pi_2$ 
such that $A_1\cap A_2\not\in \{\emptyset, A_1, A_2\}$. 
Let ${\mathcal T}^{\rho}_{\Sigma}$ denote the rooted weak $X$-tree 
obtained by viewing ${\mathcal T}_{\Sigma}$ rooted at $\rho$. Since 
$A_1\cap A_2\not\in \{\emptyset, A_1, A_2\}$, either $A_1$ or $A_2$ is not a 
cluster of ${\mathcal T}^{\rho}_{\Sigma}$. Without loss of generality, we may 
assume that $A_2$ is not a cluster of $\mathcal T^{\rho}_{\Sigma}$.
Let $E_{\pi_2}$ 
denote a subset of edges of $\mathcal T_{\Sigma}$ that displays $\pi_2$ and 
let $e$ denote the 
edge in $E_{\pi_2}$ displaying $A_2|(X-A_2)$. Observe that as $A_2$ is not 
a cluster of ${\mathcal T}^{\rho}_{\Sigma}$, it is easily seen that,
for each edge $e'\in E_{\pi_2}-\{e\}$, the 
unique path in $\mathcal T_{\Sigma}$ from $\rho$ to the vertex of 
$e'$ closer to $\rho$ traverses $e$. Now, by Theorem~\ref{thm:tree:sc}, 
${\mathcal T}_{\Sigma}$ is an even $X$-tree, and so, by Lemma~\ref{tinyeven}, 
${\mathcal T}_{\Sigma}/E_{\pi_2}$ is an even $X$-tree. We show next that
\begin{align}
\Delta({\mathcal T}_{\Sigma})=\Delta({\mathcal T}_{\Sigma}/E_{\pi_2}).
\label{same}
\end{align}

If the degree of $\rho$ is at least three, then, by the previous 
observation on the unique path in $\Tt_{\Sigma}$ starting at $\rho$, there
must exist leaves $x$ and $y$ in $\Tt_{\Sigma}$
such that the path from $\rho$ to either of them does not traverse
an edge in $E_{\pi_2}$. Thus,
$$
\Delta(\Tt_{\Sigma}) \geq \Delta({\mathcal T}_{\Sigma}/E_{\pi_2})
\geq d_{{\mathcal T}_{\Sigma}/E_{\pi_2}}(x,y)=d_{\mathcal T_{\Sigma}}(x,y)
=\Delta(\Tt_{\Sigma}),
$$
by Theorem~\ref{hequal}. Consequently, (\ref{same}) must hold in this case. 

So assume that the degree of $\rho$ equals two. Since, by assumption,
$\mathbb P(\Sigma)$ contains as hierarchical partition system $\Pi_h$
and 
$\Tt_{\Sigma}\cong \Tt_{\Pi_h}$, it follows by Corollary~\ref{equal}
that $\Tt_{\Sigma}$  does not contain an interior vertex that is labelled.
Since $A_2$ is not a cluster of $\Tt_{\Sigma}^{\rho}$, it follows that
$\Tt_{\Sigma}^{\rho}$ must contain a vertex of degree at least three
on the path from $\rho$ to 
the closer one of the two vertices of $e$. By the
observation above on the unique path in $\Tt_{\Sigma}$ starting at $\rho$
the same arguments as in the case that $\rho$ is of degree at least three
imply that (\ref{same}) must hold in this case too.


To complete the proof of the converse, let $\Pi_h$ denote again 
a hierarchical partition system in $\setpa(\Sigma)$. Then, 
combining Corollary~\ref{equal}, (\ref{same}), and 
Corollary~\ref{lem:diam:bound},
\begin{align*}
2|\Pi_h|=\Delta({\mathcal T}_{\Sigma})=
\Delta({\mathcal T}_{\Sigma}/E_{\pi_2})\le 2(|\Pi|-1) < 2|\Pi|.
\end{align*}
In particular, $|\Pi_h|<|\Pi|$, so $\Pi$ is not 
$\setpa({\mathcal T}_{\Sigma})$-minimum. 
This completes the proof of the converse and the theorem.
\end{proof}

\section{A Decision Problem}
\label{hard}

It could be of interest to try to extend the
main results in this paper to other types
of multisets of splits (e.\,g.\,weakly compatible 
or $k$-compatible sets \cite{GKM12}). 
For example, by Theorem~\ref{main1}, if
we are given a compatible multiset $\Sigma$ of
splits of a set $X$ it is easy to decide whether
or not there exists some partition system $\Pi$ on $X$ 
with $\Sigma_{\Pi}=\Sigma$, but what 
if $\Sigma$ is not compatible? We now prove
a result that indicates that extending our results
could be quite challenging.
In particular, we show that the following 
decision problem is NP-complete.

\noindent {\sc Partition System} \\ 
\noindent {\bf Instance}: A split system $\Sigma$ on $X$. \\
\noindent {\bf Question}: Is there a partition system $\Pi$ on $X$ 
such that $\Sigma_{\Pi}=\Sigma$?

To prove this result we first recall some useful facts.
Suppose $G$ is a graph. Then $G$ is called {\em simple}
if it does not contain a loop and {\em cubic} if every vertex
 has degree~$3$. A {\em matching} $M$ of $G$ is a subset of edges 
of $G$ such that no two edges in $M$ share a vertex.
A matching $M$ of $G$ is called {\em perfect}
if every vertex of $G$ is incident with some edge in $M$. 
A {\em $k$-edge colouring} of $G$ is an assignment of at 
most $k\geq 2$ colours to the edges of $G$ so that no two edges incident 
with the same vertex have the same colour. The {\em edge chromatic number} 
of $G$ is the smallest $k$ for which $G$ is $k$-edge colourable. 
A consequence of 
a theorem due to Vizing~\cite{viz64} is that the edge chromatic number 
of a simple cubic graph $G$ is either $3$ or $4$, where it is three if
and only if the edges of $G$ can be partitioned into three perfect 
matchings. 
To show that {\sc Partition System} is NP-complete, we use 
the following NP-complete problem~\cite{H81}:

\noindent {\sc Cubic Edge Colouring} \\
\noindent {\bf Instance:} A simple cubic graph $G$. \\
\noindent {\bf Question:} Is the edge chromatic number of $G$ three?

\begin{theorem}
\label{NP-complete}
The decision problem {\sc Partition System} is NP-complete
even if the split system $\Sigma$ is a \underline{set} of splits.
\end{theorem}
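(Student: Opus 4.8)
The plan is to prove that {\sc Partition System} is NP-complete by (1) showing membership in NP, and (2) giving a polynomial-time reduction from {\sc Cubic Edge Colouring}. Membership in NP is immediate: a partition system $\Pi$ with $\Sigma_\Pi = \Sigma$ has size at most $|\Sigma|$ (each partition of size $t$ contributes $t$ splits to $\Sigma_\Pi$, and each split has multiplicity one in a set, so $|\Pi| \le |\Sigma|$), so $\Pi$ is a polynomial-size certificate that can be verified in polynomial time by forming $\Sigma_\Pi$ and comparing with $\Sigma$.

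For the reduction, I would start from a simple cubic graph $G = (V,E)$ with $|V| = n$ and $|E| = m = 3n/2$, and build a ground set $X$ together with a split \emph{set} $\Sigma$ on $X$ so that partition systems in $\setpa(\Sigma)$ correspond to proper $3$-edge-colourings of $G$. The natural idea is to make each edge of $G$ correspond to a small "gadget" block of elements of $X$, and to encode each vertex $v \in V$ by forcing the three splits coming from the three edges at $v$ to be grouped together into a single partition — the part of $X$ "local" to $v$ gets subdivided into three pieces, one per incident edge, exactly as in a $3$-colouring where the three edges at $v$ receive the three distinct colours. More precisely, for each vertex $v$ I would introduce a split $\sigma_v$ whose "small side" $S_v$ is the union of the three edge-gadgets incident with $v$ (together perhaps with a private element of $X$ attached to $v$ to keep things non-trivial), and I would want that the only way to realize all the splits $\{\sigma_v\}$ and the edge-splits simultaneously as $\Sigma_\Pi$ is to pair up, for each edge $e=\{u,v\}$, the contribution of $e$ inside the partition built around $u$ with its contribution inside the partition around $v$; consistency of this pairing across all of $G$ is precisely $3$-edge-colourability. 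The "colour classes" then read off as the three partitions (or three groups of partitions) that jointly use each edge-gadget exactly once. By Vizing's theorem the edge chromatic number is $3$ or $4$, and three iff $E$ splits into three perfect matchings, which is the structure the reduction is designed to detect.

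The key steps, in order, would be: first fix the gadget — decide exactly how many elements of $X$ each edge contributes and which private/auxiliary elements each vertex contributes, so that $|X|$ is polynomial in $n$; second, write down $\Sigma$ explicitly as the set of edge-splits together with the vertex-splits $\sigma_v$ (and any "padding" splits needed to rule out degenerate solutions), and check $|\Sigma|$ is polynomial; third, prove the forward direction — given a proper $3$-edge-colouring $c: E \to \{1,2,3\}$ of $G$, construct an explicit partition system $\Pi$ (with $\Pi$ essentially having one partition per colour class, each partition cutting $X$ into the pieces dictated by that matching) and verify $\Sigma_\Pi = \Sigma$ by a direct count of split multiplicities; fourth, prove the reverse direction — given any $\Pi$ with $\Sigma_\Pi = \Sigma$, use the multiplicity-one constraint (each split appears exactly once, so each edge-gadget split is "used up" by exactly one part of exactly one partition) together with the structure of the vertex-splits $\sigma_v$ to show that $\Pi$ induces a consistent assignment of colours to edges, i.e. a proper $3$-edge-colouring. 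Finally, note that $\Sigma$ being a set, not a multiset, is exactly what makes the reverse direction force a genuine colouring rather than allowing repeated or split-up re-use of gadgets.

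The main obstacle I expect is the reverse direction combined with getting the gadget right: I need the constraints to be tight enough that \emph{every} realizing partition system $\Pi$ — not just some canonical one — decomposes the edge-gadgets in the colour-class pattern, with no "parasitic" partitions or alternative groupings available. The danger is that a clever $\Pi$ could, for instance, use a single large partition to cover splits from several non-adjacent vertices at once, or subdivide a gadget in an unintended way, thereby satisfying $\Sigma_\Pi = \Sigma$ without corresponding to a $3$-colouring; ruling this out is where the careful design of the auxiliary elements and the precise choice of which splits to include (and with what "sizes" of parts) does all the work. I would likely need a lemma along the lines of: in any $\Pi \in \setpa(\Sigma)$, the vertex-split $\sigma_v$ must be a part of some partition $\pi$ whose remaining structure is completely determined, forcing the three incident edge-gadgets of $v$ to be the three non-$\sigma_v$ parts of $\pi$ and hence assigning them three distinct "slots". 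Proving this rigidity lemma, and then checking it propagates consistently around every edge of $G$, is the technical heart of the argument.
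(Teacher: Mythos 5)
You have correctly identified the source problem ({\sc Cubic Edge Colouring}), the role of Vizing's theorem, membership in NP, and the general intuition that the multiplicity-one (set) condition is what forces a realizing partition system to behave like a $3$-edge-colouring. However, there is a genuine gap: the reduction is never actually constructed. Every load-bearing component --- the edge gadgets, the auxiliary elements, the vertex-splits $\sigma_v$, the ``padding'' splits, and above all the rigidity lemma ruling out parasitic partition systems --- is left as a list of things you ``would'' do, and you yourself flag the rigidity argument as the unresolved technical heart. As written, the proposal is a plan for a proof, not a proof, and it is not clear that the gadget you envisage (with vertex-splits whose small side is the union of three edge-gadgets) can be made to work without considerable further effort, precisely because of the parasitic-solution problem you identify.

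The missing idea is that no gadgets are needed at all. The paper's reduction simply takes $X=V$ and
$\Sigma=\{\,\{u,v\}\mid (X-\{u,v\})\ :\ \{u,v\}\in E\,\}$, assuming $|V|\ge 5$. The rigidity you were trying to engineer comes for free: if $\Pi$ is any partition system with $\Sigma_{\Pi}=\Sigma$, then every part $A$ of every $\pi\in\Pi$ satisfies $A|(X-A)\in\Sigma$, so $A$ or $X-A$ is an edge of $G$. If $X-A$ were the edge, then $|A|=|X|-2\ge 3$ forces $\pi=\{A,X-A\}$, and this single partition already contributes the split $A|(X-A)$ with multiplicity two, contradicting $\Sigma$ being a set. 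Hence every part is an edge, every $\pi\in\Pi$ is a perfect matching of the cubic graph $G$, and counting the three edges at each vertex against the multiplicity-one constraint shows $\Pi$ consists of exactly three pairwise-disjoint perfect matchings, i.e.\ a proper $3$-edge-colouring. The converse direction is immediate. You had all the right ingredients --- cubicity, perfect matchings, and the set condition killing two-part partitions --- but they should be applied directly to the edge splits of $G$ rather than to a gadget construction whose correctness remains unestablished.
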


\begin{proof}
Clearly, {\sc Partition System} is in NP. Now, 
let $G$ be an instance of {\sc Cubic Edge Colouring} 
with vertex and edge sets $V$ and $E$, respectively. 
We may assume that $|V|\ge 5$. We construct an instance 
of {\sc Partition System} as follows. Let $X=V$ and let
\begin{align*}
\Sigma=\biguplus_{\{u,v\}\in E}\{ \{u,v\}|(X-\{u,v\})\}.
\end{align*}
Note that the time taken for this construction and the size 
of the constructed instance is polynomial in the size of $G$. Moreover
since $G$ is simple the multiplicity of each split in $\Sigma$ is one,
that is, $\Sigma$ is a set.
 We next show that there exists a partition system $\Pi$ on $X$ 
with $\Sigma_{\Pi}=\Sigma$ if and only if $E$ can be partitioned 
into three perfect matchings of $G$.

First suppose that $G$ has three pairwise-disjoint perfect 
matchings $M_1$, $M_2$, and $M_3$. 
Since $M_i$ is a partition of $X$ for all $i$ and since 
each edge $\{u,v\}$ of $G$ is in precisely one of $M_1$, 
$M_2$, and $M_3$, it follows that the partition system 
$\Pi'=\{M_1, M_2, M_3\}$ on $X$ has the property that $\Sigma_{\Pi'}=\Sigma$.

Now suppose that there is a partition system $\Pi$ on $X$ 
such that $\Sigma_{\Pi}=\Sigma$. Let $\pi\in \Pi$ and let 
$A\in \pi$. Since $A|(X-A)\in \Sigma$, either $A$ or $X-A$ 
is an edge of $G$. If $X-A$ is an edge of $G$, then, as 
$|X|\ge 5$, we have $\pi=\{A, X-A\}$. But then the multiplicity 
of $A|(X-A)$ in 
$\Sigma_{\Pi}$ is at least two; a contradiction as $\Sigma$ 
is a set and not a multiset. Therefore $A$ is an edge of $G$. 
As each vertex is incident with exactly $3$ edges, it now 
follows that $\Pi$ consists of three partitions of $X$ with 
each partition being a perfect matching of $G$. Since these 
matchings are pairwise disjoint, $E$ can be partitioned into 
three perfect matchings. This completes the proof of the theorem.
\end{proof}

\section{Discussion}
\label{sec:discussion}

In this section, we shall consider the mapping that takes a partition system $\Pi$ to the split system $\Sigma_\Pi$ in a more general setting. The study of similar mappings between combinatorial objects relevant to phylogenetic analysis, such as
split systems and distances, has proven to be
a fruitful approach to various problems in
the area of phylogenetic combinatorics (cf. e.g. \cite{DHMKS}).

We begin with some additional terminology and notation. 
Given a finite set $X$, let $\upi(X)$ and $\usigma(X)$ 
be the {\em set} of partitions and splits of $X$, respectively. 
In addition, for a subset $A$ of $X$, let $\upi(X;A)$ 
be the set of partitions $\pi$ in $\upi(X)$ with $A\in \pi$. 
A {\em real partition family} on $X$ is a map $\mu$ from $\upi(X)$ 
into $\R_{\geq 0}$, and a {\em real split family} on $X$ is a 
map $\nu$ from $\usigma(X)$ into $\R_{\geq 0}$. 
Moreover, $\mu$ is called an {\em integral partition family} if  $\mu(\pi)$ is a non-negative integer for every $\pi \in \upi(X)$, and {\em integral split families} are defined in a similar manner. Note that each partition system $\Pi$ on $X$ gives rise to a real partition family $\mu_{\Pi}$ on $X$ in which $\mu_{\Pi}$ maps each partition $\pi$ in $\upi(X)$ to the multiplicity of $\pi$ in $\Pi$ if $\pi\in \Pi$, and $0$ otherwise. 

Now consider the map
$
\kappa: \R_{\geq 0}^{\upi(X)} \longrightarrow \R_{\geq 0}^{\usigma(X)}
$
that takes a real partition family $\mu$ on $X$ to the real 
split family $\kappa(\mu)$ on $X$ defined by 
$$
\kappa(\mu)(A|B)=\sum_{\pi \in \upi(X;A)} \mu(\pi)+\sum_{\pi \in \upi(X;B)} \mu(\pi)
$$
for each split $A|B$ in $\usigma(X)$.  Then for a given partition system $\Pi$  on $X$ and each split $A|B$ in $\usigma(X)$, the value $\kappa(\mu_\Pi)(A|B)$ equals to the multiplicity of $A|B$ in the split system $\Sigma_{\Pi}$ if $A|B \in \Sigma_{\Pi}$, and $0$ otherwise. Therefore, the map $\kappa$ can be regarded as a generalization of the mapping that takes a partition system $\Pi$ on $X$ to the split system $\Sigma_\Pi$. 

In this framework, the results of the 
previous sections are mainly concerned with understanding 
the {\em kernel} of the map $\kappa$, that is, the 
set $\kappa^{-1}(\nu)$ for a real split family $\nu$ on $X$. In this context, we are especially interested in the case when $\nu$ is an integral split family and the {\em support} of $\nu$, defined as the set $\{S\in \usigma(X)\,:\,\nu(S)>0\}$, is compatible. In particular,  
Theorem~\ref{main1} presents a criterion to decide whether or not
the kernel $\kappa^{-1}(\nu)$ contains an integral partition family. 
Moreover, if such an integral partition family exists, then 
Theorem~\ref{thm:maximum} provides a canonical construction 
for a maximum-sized integral partition family in that kernel, and Algorithm \textsc{MinSizePartition} 
obtains an integral partition family in the kernel 
with the minimum size (see also Theorem~\ref{theo:verify}). 
Finally, as shown in Theorem~\ref{NP-complete}, if 
the support of $\nu$ is not compatible, then it 
is NP-complete to determine whether the kernel $\kappa^{-1}(\nu)$ 
contains an integral partition family.

\begin{figure}[h]
\center
\includegraphics[scale=0.3]{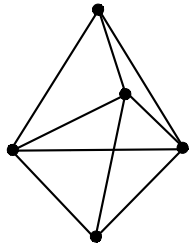}          
\caption{The 1-skeleton of the polytope $\kappa^{-1}(\nu_0)$ for the integral partition family $\nu_0$ given in Section~\ref{sec:discussion}.
}
\label{4-poly}
\end{figure}

In light of these facts, it would be interesting to characterize the 
set of real split families $\nu$ for which the kernel
$\kappa^{-1}(\nu)$ contains an integral partition family. Note that, given a real split family $\nu$, the kernel $\kappa^{-1}(\nu)$ 
may not contain an integral partition family, even if $\nu$ itself is integral. For example, consider the set $X=\{1,2,3,4\}$, the splits $S_i=\{\{i\},X-\{i\}\}$ for $1\leq i \leq 4$ and $S_5=\{\{1,2\}, \{3,4\}\}$, and let $\nu_0$ be the integral split family on $X$ defined by 
setting $\nu_0(S_i)=1$ for $1\leq i \leq 5$. 
Then it is straightforward to check that  $\kappa^{-1}(\nu_0)$ does not contain an integral partition family. However, it is not difficult to see that $\kappa^{-1}(\nu_0)$ is not empty and that it is in fact a three-dimensional polytope with five vertices (see Fig.~\ref{4-poly} for the  1-skeleton of $\kappa^{-1}(\nu_0)$ and~\cite{Z95} for definitions related to polytopes).  

More generally, it can be shown that the kernel $\kappa^{-1}(\nu)$ is always a polytope for each real split family $\nu$ on $X$. The proof of this fact is beyond the scope of this paper and will be presented elsewhere. 
Note that the polytope $\kappa^{-1}(\nu)$ can be much more complicated in general and there are several interesting questions that can be asked concerning its structure. For example, it could be of interest to find formulae for 
its dimension, and the number of its faces and vertices, or to find interesting 
characterizations for its faces and vertices. A better understanding of these questions should hopefully shed further light on mappings from partition systems to split systems and, ultimately, their application to phylogenetics.

\bigskip
\noindent
{\bf Acknowledgement} We would like to thank two anonymous referees and the editor L\'{a}szl\'{o} Sz\'{e}kely for their helpful comments, especially the suggestion to consider the mapping discussed in the final section.

\end{document}